\newcommand{\infop}[0]{\bar\partial+\bar\partial^*}
\newcommand{\hook}[0]{\lrcorner\,}
\newtheorem{thm}{Theorem}
\newtheorem*{thm*}{Theorem}
\newtheorem{lem}[thm]{Lemma}
\newtheorem{prop}[thm]{Proposition}
\newtheorem*{prop*}{Proposition}
\newtheorem{cor}[thm]{Corollary}
\newtheorem*{cor*}{Corollary}
\theoremstyle{definition}
\newtheorem*{rem}{Remark}
\newtheorem{defn}{Definition}
\numberwithin{thm}{section}
\numberwithin{equation}{section}
\numberwithin{defn}{section}
\title{Cayley deformations of compact complex surfaces}
\author{Kim Moore}
\address{Department of Mathematics, University College London, Gower Street, London, WC1E  6BT}
\email{kim.moore@ucl.ac.uk}
\begin{document}
\begin{abstract}
In this article, we consider Cayley deformations of a compact complex surface in a Calabi--Yau four-fold. We will study complex deformations of compact complex submanifolds of Calabi--Yau manifolds with a view to explaining why complex and Cayley deformations of a compact complex surface are the same. We in fact prove that the moduli space of complex deformations of any compact complex embedded submanifold of a Calabi--Yau manifold is a smooth manifold.
\end{abstract}

\maketitle

\section{Introduction}\label{sec:intro}
Cayley submanifolds are calibrated submanifolds that arise naturally in manifolds with exceptional holonomy $Spin(7)$. Calibrated submanifolds are by construction volume minimising, and hence minimal submanifolds. Cayley submanifolds also have connections to the proposed program of Donaldson-Thomas \cite{MR1634503}, and more recently Donaldson-Segal \cite{MR2893675}, for higher dimensional gauge theory. In fact, it was proved by Tian \cite{MR1745014} that the blow up loci of $Spin(7)$-instantons are closed Cayley currents.

The most abundant source of Cayley submanifolds are two-dimensional complex submanifolds $N$ of Calabi--Yau four-folds $M$. We can deform $N$ both as a Cayley and as a complex submanifold, but do there exist Cayley deformations of $N$ that are not complex deformations? When $N$ is compact, the following result of Harvey and Lawson may be applied.
\begin{prop}[{\cite[II.4 Thm 4.2]{MR666108}}]\label{prop:calibmin}
Let $X$ be a Riemannian manifold with calibration $\alpha$ and let $Y$ be a compact $\alpha$-calibrated submanifold. Let $Y'$ be any other compact submanifold of $X$ homologous to $Y$. Then
\[
 \int_Y \textnormal{vol}_Y\le \int_{Y'}\textnormal{vol}_{Y'},
\]
with equality if, and only if, $Y'$ is also $\alpha$-calibrated.
\end{prop}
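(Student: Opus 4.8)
The plan is to exploit the three defining features of a calibration---that $\alpha$ is closed, that its restriction to any oriented tangent $p$-plane is bounded above by the induced volume form, and that $Y$ attains this bound pointwise---and to convert "homologous" into a statement amenable to Stokes' theorem. Write $p=\dim Y=\dim Y'$. First I would record the identity $\int_Y\textnormal{vol}_Y=\int_Y\alpha$, which is immediate from $Y$ being $\alpha$-calibrated. Next, since $Y$ and $Y'$ are homologous compact submanifolds, there is a $(p+1)$-chain (or integral current) $W$ with $\partial W=Y'-Y$; applying Stokes' theorem and $d\alpha=0$ gives
\[
\int_{Y'}\alpha-\int_Y\alpha=\int_W d\alpha=0,
\]
so $\int_Y\alpha=\int_{Y'}\alpha$. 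Then I would invoke the pointwise calibration inequality along $Y'$: at each $y'\in Y'$ one has $\alpha|_{T_{y'}Y'}\le\textnormal{vol}_{T_{y'}Y'}$, and integrating yields $\int_{Y'}\alpha\le\int_{Y'}\textnormal{vol}_{Y'}$. Chaining these together gives
\[
\int_Y\textnormal{vol}_Y=\int_Y\alpha=\int_{Y'}\alpha\le\int_{Y'}\textnormal{vol}_{Y'},
\]
which is the claimed inequality.

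For the equality case, suppose $\int_Y\textnormal{vol}_Y=\int_{Y'}\textnormal{vol}_{Y'}$. Then the single inequality in the chain above, $\int_{Y'}\alpha\le\int_{Y'}\textnormal{vol}_{Y'}$, must be an equality; since the integrand difference $\textnormal{vol}_{T_{y'}Y'}-\alpha|_{T_{y'}Y'}$ is nonnegative and continuous on the compact manifold $Y'$, it must vanish identically, i.e. $\alpha|_{T_{y'}Y'}=\textnormal{vol}_{T_{y'}Y'}$ for every $y'$, which is precisely the statement that $Y'$ is $\alpha$-calibrated. Conversely, if $Y'$ is $\alpha$-calibrated, running the same computation with the roles interchanged gives $\int_{Y'}\textnormal{vol}_{Y'}=\int_{Y'}\alpha=\int_Y\alpha=\int_Y\textnormal{vol}_Y$, so equality holds.

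The computation itself is a one-liner; the only point requiring care is the homological input, namely ensuring that "homologous" is taken in a sense (singular homology with a smooth bounding chain, or the language of integral currents) for which a bounding $W$ exists and Stokes' theorem is valid---this is the step I would state precisely at the outset. The equality case then needs only the elementary observation that a nonnegative continuous function with zero integral over a compact space is identically zero, so no further analytic difficulty arises.
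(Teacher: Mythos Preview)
Your argument is correct and is precisely the standard Harvey--Lawson proof. Note that the paper itself does not supply a proof of this proposition: it is quoted with a citation to \cite[II.4 Thm 4.2]{MR666108} and used as input, so there is no in-paper proof to compare against. What you have written is essentially the original argument from that reference.
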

So if $N$ is a compact complex surface inside a Calabi--Yau four-fold $M$ and $N'$ is a Cayley deformation of $N$, then $N'$ is certainly homologous to $N$, and since calibrated submanifolds are volume minimising in their homology class, we must have that
\[
 \int_N \text{vol}_N=\int_{N'}\text{vol}_{N'}.
\]
But then Proposition \ref{prop:calibmin} tells us that $N'$ must also be a complex submanifold. 

This proof is very effective, but does not give any geometric intuition as to why a Cayley deformation of $N$ must be a complex deformation. We know from the work of McLean \cite{MR1664890} that if a Cayley submanifold is spin, then the infinitesimal Cayley deformations of the Cayley submanifold can be identified with the kernel of the twisted Dirac operator.

If a complex surface $N$ is spin, then we have the following identifications \cite[pg 82]{MR1777332}
\begin{align*}
 \mathbb{S}_+&\cong (\Lambda^{0,0}N\oplus \Lambda^{0,2}N)\otimes S_k, \\
\mathbb{S}_-&\cong \Lambda^{0,1}N\otimes S_k,
\end{align*}
where $S_k$ is a holomorphic line bundle satisfying $S_k\otimes S_k=\Lambda^{2,0}N$, and in this case the Dirac operator is given by
\[
 \sqrt{2}(\infop).
\]
Motivated by this, but without requiring $N$ to be spin, we will show in Proposition \ref{prop:caylin} that infinitesimal Cayley deformations of $N$ in $M$ can be identified with the kernel of
\[
\infop: C^\infty(\nu^{1,0}_M(N)\oplus \Lambda^{0,2}N\otimes \nu^{1,0}_M(N))\to C^\infty(\Lambda^{0,1}N\otimes \nu^{1,0}_M(N)).
\]
We will use this to deduce a result, Theorem \ref{thm:compactcay}, on the moduli space of Cayley deformations of $N$ in $M$. We will also give a formula for the expected dimension of this moduli space in terms of topological invariants of $N$ in Theorem \ref{thm:index}.

In the second part of this article, motivated by the study of complex deformations of a complex surface inside a Calabi--Yau four-fold, we will prove, in the style of McLean, a result on the moduli space of complex deformations of any compact complex submanifold $N$ of a Calabi--Yau manifold $M$. As we already know from the seminal work of Kodaira \cite[Thm 1]{MR0133841}, we will see that the infinitesimal complex deformations of $N$ can be identified with the kernel of the operator
\[
\bar\partial :C^\infty(\nu^{1,0}_M(N))\to C^\infty(\Lambda^{0,1}N\otimes \nu^{1,0}_M(N)),
\]
which by Dolbeault's theorem can be identified, in the language of Kodaira, with the sheaf cohomology group $H^0(N,\nu^{1,0}_M(N))$. However, we can actually improve on Kodaira's result in this special case -- that is, we can show that while the obstructions do not necessarily vanish, they do not contribute to the moduli space. We prove in Theorem \ref{thm:cxdefs} that the moduli space of complex deformations of a compact complex embedded submanifold in a Calabi--Yau manifold is a smooth manifold of dimension 
\[
2\, \text{dim}_\mathbb{C}\text{Ker }\bar\partial=\text{dim}_\mathbb{R}\text{Ker }\bar\partial.
\]
To apply the argument used to prove this result, embeddedness of the submanifold is crucial. From this result, we deduce that in order to be able deform a compact complex surface $N$ in a Calabi--Yau four-fold $M$ as a Cayley submanifold into something not complex, we must find $v\in C^\infty(\nu^{1,0}_M(N))$, $w\in C^\infty(\Lambda^{0,2}N\otimes \nu^{1,0}_M(N))$ so that
\[
\bar\partial v=-\bar\partial^* w.
\]
However, this can't happen since $N$ is compact, and so infinitesimal complex and Cayley deformations of $N$ in $M$ are the same. In Theorem \ref{thm:maincomp} we use this to prove that the moduli space of Cayley deformations of $N$ in $M$ is a smooth manifold of dimension 
\[
\text{dim }\text{Ker }(\infop)=2\text{dim }\text{Ker }\bar\partial.
\]
\textbf{Layout.} The report is organised as follows. We begin by recalling some definitions and basic facts about $Spin(7)$-manifolds, Calabi--Yau manifolds and Cayley submanifolds in Section \ref{sec:prelim}. We will prove a result on the moduli space of Cayley deformations of a compact complex surface inside a Calabi--Yau four-fold in Section \ref{sec:cay}. Finally, we will apply McLean's method to study complex deformations of a compact complex submanifolds inside Calabi--Yau manifolds in Section \ref{sec:compdef}.

\textbf{Notation and conventions.} We will take all submanifolds to be embedded. On a complex manifold $M$ we denote $\Lambda^{p,q}M:= \Lambda^p T^{*1,0}M\otimes \Lambda^qT^{*0,1}M$ where $T^{*1,0}M$ and $T^{*0,1}M$ denote the holomorphic and antiholomorphic tangent bundles of $M$. For a complex submanifold $N$ in $M$, we denote by $\nu_M(N), \nu^{1,0}_M(N)$ and $\nu^{0,1}_M(N)$ the normal bundle, holomorphic normal bundle and antiholomorphic normal bundle respectively of $N$ in $M$.
\section{Preliminaries}\label{sec:prelim}
Before we begin we will state the definitions that we will use throughout this report. The following definition is based on the one given in Joyce's book \cite[Defn 11.4.2]{MR2292510}.
\begin{defn}\label{defn:spin7}
 Let $(x_1,\dots,x_8)$ be coordinates on $\mathbb{R}^8$ with the Euclidean metric $g_0=dx_1^2+\dots+dx_8^2$. Define a four-form on $\mathbb{R}^8$ by
\begin{align}\nonumber
 \Phi_0:=&dx_{1234}-dx_{1256}-dx_{1278}-dx_{1357}+dx_{1368}-dx_{1458}-dx_{1467} \\\label{eqn:phi0}
-&dx_{2358}-dx_{2367}+dx_{2457}-dx_{2468}-dx_{3456}-dx_{3478}+dx_{5678},
\end{align}
where $dx_{ijkl}:=dx_i\wedge dx_j\wedge dx_k\wedge dx_l$.

Let $M$ be an eight-dimensional oriented manifold. Define for each $p\in M$ the subset $\mathcal{A}_pM\subseteq \Lambda^4T^*_pM$ to be those four-forms $\Phi$ for which there exists an oriented isomorphism $T_pM\to \mathbb{R}^8$ identifying $\Phi$ and $\Phi_0$ given in \eqref{eqn:phi0}, and define the vector bundle $\mathcal{A}M$ to be the vector bundle with fibre $\mathcal{A}_pM$.

A four-form $\Phi$ on $M$ satisfying $\Phi|_p\in \mathcal{A}_pM$ for all $p\in M$ defines a metric $g$ on $M$, using the fact that each tangent space to $M$ is identified with $\mathbb{R}^8$ with the Euclidean metric. We call $(\Phi,g)$ a $Spin(7)$-\emph{structure} on $M$. Let $\nabla$ denote the Levi-Civita connection of $g$. Say that $(\Phi,g)$ is a \emph{torsion-free} $Spin(7)$-structure on $M$ if $\nabla \Phi=0$.

We say that $(M,\Phi,g)$ is a $Spin(7)$-\emph{manifold} if $M$ is an eight-dimensional oriented manifold and $(\Phi,g)$ is a torsion-free $Spin(7)$-structure on $M$.
\end{defn}

By definition, if $(M,\Phi,g)$ is a $Spin(7)$-manifold then $\Phi$ is a calibration on $M$, known as the \emph{Cayley calibration}. An oriented, four-dimensional submanifold $N$ of $M$ is said to be Cayley if 
\[
\Phi|_N=\text{vol}_N,
\]
i.e., $N$ is $\Phi$-calibrated.

\begin{defn}
Let $(M^m,J,\omega')$ be a compact K\"{a}hler manifold with trivial canonical bundle, that is, there exists a nowhere vanishing section $\alpha$ of $K_M:= \Lambda^{m,0}M$ with $\bar\partial\alpha=0$. Then by Yau's proof of the Calabi conjecture, there exists a Ricci-flat K\"{a}hler form $\omega\in [\omega']$. Choose $\Omega\in \Omega^{m,0}(M)$ so that
\begin{equation}\label{eqn:omOm}
\frac{\omega^m}{m!}=\left(\frac{i}{2}\right)^m(-1)^{m(m-1)/2}\Omega\wedge \overline{\Omega}.
\end{equation}
We call $(M,J,\omega,\Omega)$ a Calabi--Yau manifold.
\end{defn}
Given a four-dimensional Calabi--Yau manifold $(M,J,\omega,\Omega)$, we can define a Cayley form on $M$ by
\begin{equation}\label{eqn:caycy}
\Phi=\frac{1}{2}\omega\wedge \omega+\text{Re }\Omega,
\end{equation}
and so by the choice of constant in \eqref{eqn:omOm}, we can view $M$ as a $Spin(7)$-manifold. Moreover, expression \eqref{eqn:caycy} allows us to see directly that complex surfaces and special Lagrangians are Cayley.

We can decompose bundles of forms on $Spin(7)$-manifolds into irreducible representations of $Spin(7)$. The vector bundle $\Lambda^2_7$ defined below will appear frequently in this exposition. The following proposition can be found in Joyce's book \cite[Prop 11.4.4]{MR2292510}.
\begin{prop}\label{prop:spin7decomp}
Let $M$ be a $Spin(7)$-manifold. Then the bundle of two-forms on $M$ admits the following decomposition into irreducible representations of $Spin(7)$:
\begin{align*}
 \Lambda^2M\cong \Lambda^2_7\oplus \Lambda^2_{21},
\end{align*}
where $\Lambda^k_l$ denotes the irreducible representation of $Spin(7)$ on $k$-forms of dimension $l$.
\end{prop}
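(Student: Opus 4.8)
The plan is to reduce the statement to a fact about representations of $Spin(7)$ on a single tangent space and then to realise the splitting explicitly as the eigenspace decomposition of a $Spin(7)$-equivariant endomorphism of $\Lambda^2$. By Definition \ref{defn:spin7}, the four-form $\Phi$ identifies each $\Lambda^4 T_p^*M$ with $\Lambda^4(\mathbb{R}^8)^*$ up to the action of the stabiliser of $\Phi_0$, which is the group $Spin(7)$ acting on $\mathbb{R}^8$ through its real eight-dimensional spinor representation, and any two such identifications differ by an element of $Spin(7)\subseteq SO(8)$ --- the inclusion holding because $g_0$ and the orientation are recovered from $\Phi_0$. Hence it is enough to produce a $Spin(7)$-invariant orthogonal decomposition $\Lambda^2(\mathbb{R}^8)^*=U_7\oplus U_{21}$ into irreducible subrepresentations of dimensions $7$ and $21$: the subbundles $\Lambda^2_7M$ and $\Lambda^2_{21}M$ are then defined fibrewise, and are well defined precisely because the decomposition is $Spin(7)$-invariant.

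For the splitting I would study the linear map
\[
 L_\Phi\colon \Lambda^2(\mathbb{R}^8)^*\longrightarrow \Lambda^2(\mathbb{R}^8)^*,\qquad L_\Phi(\beta):=*(\Phi_0\wedge\beta),
\]
where $*$ is the Hodge star of $g_0$. Since $\Phi_0$ is fixed by $Spin(7)$ and $*$ is $SO(8)$-invariant, $L_\Phi$ is $Spin(7)$-equivariant; it is also symmetric for the induced inner product on two-forms, hence orthogonally diagonalisable with $Spin(7)$-invariant eigenspaces. Evaluating $L_\Phi$ on the basis $\{dx_i\wedge dx_j\}_{i<j}$ by means of \eqref{eqn:phi0} --- only the monomials $\pm\,dx_{klmn}$ of $\Phi_0$ with $\{k,l,m,n\}$ disjoint from $\{i,j\}$ contribute, each giving $\pm$ the basis element indexed by the complementary pair --- one checks that $L_\Phi$ obeys the quadratic identity $(L_\Phi-3\,\mathrm{id})(L_\Phi+\mathrm{id})=0$, so $\Lambda^2(\mathbb{R}^8)^*=U_7\oplus U_{21}$ with $U_7$ the $3$-eigenspace and $U_{21}$ the $(-1)$-eigenspace. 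The same observation shows $L_\Phi$ has vanishing diagonal, so $\mathrm{tr}\,L_\Phi=0$; with $a=\dim U_7$ and $b=\dim U_{21}$ the relations $a+b=28$ and $3a-b=0$ force $a=7$, $b=21$. This eigenvalue computation is the computational heart of the proof and the step most exposed to sign and indexing errors; I would carry it out by grouping the fourteen monomials of $\Phi_0$, the representative check $L_\Phi(dx_{12}+dx_{34}-dx_{56}-dx_{78})=3(dx_{12}+dx_{34}-dx_{56}-dx_{78})$ already identifying which eigenvalue goes with the small summand.

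It remains to prove that $U_7$ and $U_{21}$ are irreducible. Under the metric isomorphism $\Lambda^2(\mathbb{R}^8)^*\cong\mathfrak{so}(8)$, the Lie algebra $\mathfrak{spin}(7)=\{A\in\mathfrak{so}(8):A\cdot\Phi_0=0\}$ (with $A\cdot\Phi_0$ the derivative of the $GL(8,\mathbb{R})$-action) is a $21$-dimensional subrepresentation, irreducible because it is the adjoint representation of the simple group $Spin(7)$. For $U_7$: it is seven-dimensional, and a nonzero $Spin(7)$-fixed vector in $\Lambda^2\cong\mathfrak{so}(8)$ would commute with all of $\mathfrak{spin}(7)$, which is impossible since $\mathfrak{spin}(7)$ acts on $\mathbb{R}^8$ irreducibly and of real type, so by Schur its commutant in $\mathfrak{gl}(8,\mathbb{R})$ is $\mathbb{R}\,\mathrm{id}$ and meets $\mathfrak{so}(8)$ only in $0$; since the smallest nontrivial irreducible representation of $Spin(7)$ is the seven-dimensional vector representation, $U_7$ is irreducible. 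Then $\mathfrak{spin}(7)\cap U_7$ is a subrepresentation of the irreducible $U_7$ that, for dimension reasons, cannot be all of $U_7$, so it is $0$; hence the equivariant composition $\mathfrak{spin}(7)\hookrightarrow\Lambda^2\twoheadrightarrow\Lambda^2/U_7$ is an isomorphism and $U_{21}\cong\mathfrak{spin}(7)$ is irreducible. As a sanity check on the whole decomposition I would also note the route through the isomorphism $\mathbb{R}^8\cong\Delta$ with the real spinor representation: the Clifford-module identity $\Delta\otimes\Delta\cong\bigoplus_{k=0}^{3}\Lambda^k\mathbb{R}^7$, combined with $\Delta\otimes\Delta=S^2\Delta\oplus\Lambda^2\Delta$ and the dimension count $36+28=1+7+21+35$, forces $\Lambda^2\Delta\cong\Lambda^1\mathbb{R}^7\oplus\Lambda^2\mathbb{R}^7$, i.e.\ exactly $U_7\oplus U_{21}$ with both summands manifestly irreducible.
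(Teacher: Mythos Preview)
Your proof is correct and complete. Note, however, that the paper does not actually prove this proposition: it is quoted from Joyce's book \cite[Prop 11.4.4]{MR2292510} and stated without argument, followed only by the remark recording the explicit description \eqref{eqn:l27} of $\Lambda^2_7$. So there is no ``paper's own proof'' to compare against.

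That said, your argument is exactly the standard one underlying the cited result: realise the splitting as the eigenspace decomposition of the $Spin(7)$-equivariant endomorphism $\beta\mapsto *(\Phi_0\wedge\beta)$, read off the dimensions from the trace, and identify the pieces with the vector and adjoint representations of $Spin(7)$. The eigenvalue description $(L_\Phi-3)(L_\Phi+1)=0$ you obtain is precisely what makes formula \eqref{eqn:l27} in the paper's remark work, since $\pi_7=\tfrac{1}{4}(L_\Phi+\mathrm{id})$ projects onto the $3$-eigenspace; your representative eigenvector $dx_{12}+dx_{34}-dx_{56}-dx_{78}$ is consistent with the sign conventions in \eqref{eqn:phi0}. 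The irreducibility arguments via Schur and the adjoint representation are clean, and the spinor-module cross-check is a nice redundancy.
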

\begin{rem}
 Given an orthonormal frame for $M$ $\{e_1,\dots,e_8\}$ with coframe $\{e^1,\dots,e^8\}$, we can explicitly define $\Lambda^2_7$. The following expression is taken from \cite[Thm 9.8]{salamon2010notes}. We have that
\begin{align}\label{eqn:l27}
 \Lambda^2_7=\{e^i\wedge e^j-(e_i\hook(e_j\hook\Phi))\,|\,1\le i<j\le8\}.
\end{align}
\end{rem}

The next result allows us to characterise Cayley submanifolds of a $Spin(7)$-manifold $(X,\Phi,g)$ in terms of a four-form that vanishes exactly when restricted to a Cayley submanifold of $X$.
\begin{prop}[{\cite[Lem 10.15]{salamon2010notes}}]\label{prop:tau}
Let $X$ be a real eight-dimensional manifold with $Spin(7)$-structure $(\Phi,g)$. Let $Y$ be an oriented four-dimensional submanifold of $X$. Then $Y$ is a Cayley submanifold of $X$ if, and only if, $\tau|_Y\equiv 0$, where $\tau\in C^\infty(\Lambda^4 X\otimes \Lambda^2_7)$ is defined by, for any vector fields $x,u,v,w$ on $X$
\begin{align*}
 \tau(x,u,v,w)&=\frac{1}{4}\left(\pi_7(\Phi(\,\cdot\, ,u, v, w)\wedge x^\flat)-\pi_7(\Phi(\,\cdot\, ,v, w, x) \wedge u^\flat)\right. \\
&\left.+\pi_7(\Phi(\,\cdot\, ,w, x, u) \wedge v^\flat)-\pi_7(\Phi(\,\cdot\, ,x, u, v) \wedge w^\flat)\right),
\end{align*}
where $\pi_7:\Lambda^2X\to \Lambda^2_7$ is the projection map given by $\pi_7(x^\flat\wedge y^\flat)=\frac{1}{2}(x^\flat \wedge y^\flat+\Phi(x,y,\cdot,\cdot))$ and $\flat$ denotes the musical isomorphism $TX\to T^*X$.

Moreover, if $x,u,v,w$ are orthogonal then
\[
 \tau(x,u,v,w)=\pi_7(\Phi(\,\cdot\, ,u, v, w)\wedge x^\flat).
\]
\end{prop}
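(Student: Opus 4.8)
The statement is pointwise and $Spin(7)$-equivariant, so my plan is to fix $p\in X$, use Definition~\ref{defn:spin7} to identify $(T_pX,\Phi|_p,g|_p)$ with $(\mathbb{R}^8,\Phi_0,g_0)$, and reduce everything to linear algebra in $\mathbb{R}^8$, using repeatedly that $Spin(7)$ acts transitively on orthonormal $k$-frames of $\mathbb{R}^8$ for $k\le3$ (this follows from $Spin(7)/G_2\cong S^7$, $G_2/SU(3)\cong S^6$ and $SU(3)/SU(2)\cong S^5$). I would first check that $\tau$ is genuinely a section of $\Lambda^4X\otimes\Lambda^2_7$: it is tensorial in $x,u,v,w$ and takes values in $\Lambda^2_7$ since $\pi_7$ does, so the only issue is total antisymmetry, which is a short bookkeeping — $\Phi(\,\cdot\,,u,v,w)$ is already antisymmetric in $u,v,w$, the four terms defining $\tau$ form a signed cyclic sum, and any transposition of the arguments permutes these four terms with an overall sign change; as transpositions generate $S_4$, $\tau$ is totally antisymmetric.

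For the \emph{moreover} formula, I would rescale $x,u,v,w$ to be orthonormal (both sides vanish if one of them is zero), then use transitivity of $Spin(7)$ on orthonormal $3$-frames to assume $(u,v,w)=(\partial_2,\partial_3,\partial_4)$ and $\Phi=\Phi_0$, so that $x\perp\partial_2,\partial_3,\partial_4$. Inspecting \eqref{eqn:phi0}, the only monomial of $\Phi_0$ containing $dx_2\wedge dx_3\wedge dx_4$ is $dx_{1234}$, so $\Phi_0(\,\cdot\,,\partial_2,\partial_3,\partial_4)=dx_1$; contracting $\Phi_0$ against the other three ordered triples that occur in the definition of $\tau$ is an equally explicit finite computation, and comparing the results shows that all four summands coincide, which gives the formula. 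I expect this collapsing of the four terms — in essence a computation with the components of $\Phi_0$, with careful sign accounting — to be the main obstacle; the rest is comparatively soft.

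For the equivalence, fix $p\in Y$, choose an oriented orthonormal basis $e_1,e_2,e_3,e_4$ of $T_pY$, and extend it to an orthonormal basis of $T_pX$. Since $\Lambda^4 T^*_p Y$ is one-dimensional, $\tau|_Y(p)=0$ if and only if $\tau(e_1,e_2,e_3,e_4)=0$, and by the \emph{moreover} formula $\tau(e_1,e_2,e_3,e_4)=\pi_7(q^\flat\wedge e_1^\flat)$, where $q$ is the vector with $q^\flat=\Phi(\,\cdot\,,e_2,e_3,e_4)$. Now $\langle q,e_i\rangle=\Phi(e_i,e_2,e_3,e_4)=0$ for $i=2,3,4$, so in the orthogonal decomposition $T_pX=T_pY\oplus(T_pY)^\perp$ we have $q=\Phi(e_1,e_2,e_3,e_4)\,e_1+\xi$ with $\xi\in(T_pY)^\perp$, and since the $e_1$-component wedges to zero against $e_1^\flat$ we get $\tau(e_1,e_2,e_3,e_4)=\pi_7(\xi^\flat\wedge e_1^\flat)$.

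It then remains to decide when $\pi_7(\xi^\flat\wedge e_1^\flat)$ vanishes. By the formula for $\pi_7$ stated in the proposition, $\pi_7(\xi^\flat\wedge e_1^\flat)=\tfrac12\bigl(\xi^\flat\wedge e_1^\flat+\Phi(\xi,e_1,\,\cdot\,,\cdot)\bigr)$, and since $\xi\perp e_1$ one has $|\xi^\flat\wedge e_1^\flat|^2=|\xi|^2$ whereas $|\Phi(\xi,e_1,\,\cdot\,,\cdot)|^2=3|\xi|^2$ (check in the flat model, using transitivity on orthonormal $2$-frames); two vectors of different length cannot be negatives of one another, so $\pi_7(\xi^\flat\wedge e_1^\flat)=0$ forces $\xi=0$, and conversely $\xi=0$ makes it vanish. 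On the other hand $|q|^2=|\Phi(\,\cdot\,,e_2,e_3,e_4)|^2=1$ for any orthonormal triple (again the flat model, via transitivity on orthonormal $3$-frames), so $\Phi(e_1,e_2,e_3,e_4)^2+|\xi|^2=1$. Combining these, $\tau|_Y(p)=0\iff\xi=0\iff\Phi(e_1,e_2,e_3,e_4)^2=1\iff\Phi|_{T_pY}=\pm\mathrm{vol}_{T_pY}$; by continuity the sign is locally constant on $Y$, and for the orientation making it $+1$ this says exactly that $T_pY$ is Cayley. As $p\in Y$ was arbitrary, $\tau|_Y\equiv0$ if and only if $Y$, suitably oriented, is Cayley.
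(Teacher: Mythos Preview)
The paper does not supply a proof of this proposition: it is quoted from Salamon's lecture notes and used as a black box throughout, so there is no in-paper argument to compare against. Your sketch is correct and would make a self-contained proof.

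Two small remarks. First, in the \emph{moreover} step you assert that ``all four summands coincide''. This is true, but only after applying $\pi_7$: for instance with $(u,v,w)=(e_2,e_3,e_4)$ and $x=e_5$ the four raw two-forms are $e^1\wedge e^5$, $e^2\wedge e^6$, $e^3\wedge e^7$, $e^4\wedge e^8$ (up to sign), which are distinct in $\Lambda^2$ but share the same $\Lambda^2_7$-component. When you write this out it is worth making the role of the projection explicit. Second, your argument yields $\tau|_Y\equiv 0\Longleftrightarrow \Phi|_Y=\pm\mathrm{vol}_Y$, and you rightly note that the sign is fixed by the orientation of $Y$; strictly, then, $\tau|_Y\equiv 0$ detects that $Y$ is Cayley for \emph{some} orientation. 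This is the standard reading of the statement and is exactly how the proposition is used later in the paper, where one simply works with the Cayley orientation.
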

Given an orthonormal frame $\{e_1,\dots , e_8\}$ for $X$, we can equivalently write
\begin{equation}\label{eqn:taunice}
\tau=\sum_{i=2}^8(e^i\wedge (e_1\hook\Phi)-e^i\wedge (e_1\hook\Phi))\otimes \pi_7(e^1\wedge e^i).
\end{equation}
\section{Cayley deformations of compact complex surfaces}\label{sec:cay}

\subsection{Deformations as normal vector fields}\label{ss:nvf}
Let $X$ be a manifold with a submanifold $Y$. We say that $Y'$ is a deformation of $Y$ in $X$ if there exists a smooth family of embeddings $\iota_t:Y\to X$ such that $\iota_0(Y)=Y$ and $\iota_1(Y)=Y'$.
\begin{defn}\label{defn:modspcay}
 Let $(X,g,\Phi)$ be a $Spin(7)$-manifold, and let $Y$ be a Cayley submanifold of $X$. Define the \emph{moduli space of Cayley deformations of $Y$}, $\mathcal{M}_\text{Cay}(Y)$, to be the set of  deformations $Y'$ of $Y$ that are Cayley submanifolds of $(X,g,\Phi)$.
\end{defn}
We will identify nearby deformations of $Y$ with small normal vector fields on $Y$. For this we require the tubular neighbourhood theorem. A proof of this result can be found in \cite[IV, Thm 5.1]{MR1931083}.
\begin{thm}[Tubular neighbourhood theorem]\label{thm:cpttubnbhd}
 Let $X$ be a Riemannian manifold and $Y$ be a closed embedded submanifold of $X$. Then there exists an open set $V\subseteq \nu_X(Y)$ containing the zero section and an open set $Y\subseteq T\subseteq X$ such that the exponential map
\[
 \textnormal{exp}|_V:V\to T,
\]
is a diffeomorphism.
\end{thm}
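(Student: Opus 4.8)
The plan is to construct the diffeomorphism directly from the Riemannian exponential map of $X$ restricted to normal directions, and then to separately establish that this restriction is a local diffeomorphism near the zero section and that it is globally injective on a (possibly smaller) neighbourhood of the zero section.

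First I would recall that the domain $\mathcal{D}\subseteq TX$ of the exponential map $\exp$ of the Levi-Civita connection of $g$ is open and contains the zero section of $TX$; hence $\mathcal{D}\cap\nu_X(Y)$ is an open neighbourhood of the zero section of $\nu_X(Y)$ on which the map $E:=\exp|_{\nu_X(Y)}$ is defined and smooth. The key pointwise computation is then that $\mathrm{d}E$ is an isomorphism at every point $0_y$ of the zero section: there is a canonical splitting $T_{0_y}\nu_X(Y)\cong T_yY\oplus(\nu_X(Y))_y\cong T_yX$, and under it $\mathrm{d}E_{0_y}=\mathrm{id}_{T_yX}$ — along the zero-section factor because $E$ restricts to the inclusion $Y\hookrightarrow X$, and along the fibre factor because $t\mapsto\exp_y(tv)$ is the geodesic with initial velocity $v$. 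By the inverse function theorem, for each $y\in Y$ there is a neighbourhood of $0_y$ on which $E$ is a diffeomorphism onto its image, and the union of these over $y\in Y$ gives an open neighbourhood $V_0$ of the zero section on which $E$ is a local diffeomorphism.

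It remains to shrink $V_0$ to a neighbourhood $V$ of the zero section on which $E$ is injective; then $E|_V$ is an injective local diffeomorphism, hence an open map, so $T:=E(V)$ is open, contains $Y=E(\text{zero section})$, and $E|_V\colon V\to T$ is the required diffeomorphism. Injectivity is where the hypothesis that $Y$ is closed enters. In the case of primary interest here, where $Y$ is compact, this is a short contradiction argument: if no such $V$ existed there would be sequences $(y_n,v_n)\neq(y_n',v_n')$ in $\nu_X(Y)$ with $|v_n|_g,|v_n'|_g\to0$ and $E(y_n,v_n)=E(y_n',v_n')$; compactness of $Y$ lets us pass to subsequences with $y_n\to y$ and $y_n'\to y'$, and taking limits forces $y=y'$, contradicting injectivity of $E$ on a neighbourhood of $0_y$ for $n$ large. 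For a general closed embedded $Y$ one instead chooses a smooth positive function $\varepsilon\colon Y\to(0,\infty)$, via a locally finite cover and the properness of $Y\hookrightarrow X$, small enough that $V:=\{(y,v)\in\nu_X(Y):|v|_g<\varepsilon(y)\}\subseteq V_0$ and $E$ is injective on $V$.

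The main obstacle is exactly this last step in the non-compact case: promoting the local statement near the zero section to a global diffeomorphism requires a non-uniform radius $\varepsilon(y)$ and genuinely uses that $Y$ is a closed subset, not merely an embedded submanifold. The compact case relevant to this paper, by contrast, is comparatively soft, and the injectivity subtleties collapse into a single covering-and-compactness argument.
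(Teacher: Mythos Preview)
Your argument is correct and is the standard proof of the tubular neighbourhood theorem. Note, however, that the paper does not actually prove this result: it is stated as Theorem~\ref{thm:cpttubnbhd} with a citation to Lang's textbook (\cite[IV, Thm 5.1]{MR1931083}) and used as a black box thereafter. So there is no ``paper's own proof'' to compare against; you have simply supplied the classical argument where the author chose to cite one. For the purposes of this paper only the compact case is ever invoked (the submanifold $N$ is always assumed compact), so your final paragraph on the non-compact closed case, while correct, is not strictly needed here.
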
\subsection{Identifications of vector bundles}\label{ss:isom}

In this section we construct isomorphisms of vector bundles on a complex surface $N$ in a Calabi--Yau four-fold $M$.

\begin{prop}\label{prop:normal}
 Let $N$ be a two-dimensional complex submanifold of a Calabi--Yau four-fold $M$. Then 
\begin{equation}\label{eqn:normal}
 \nu_M(N)\otimes \mathbb{C}\cong \nu_M^{1,0}(N)\oplus \Lambda^{0,2}N\otimes \nu^{1,0}_M(N),
\end{equation}
where $\nu_M(N)$ denotes the normal bundle of $N$ in $M$ and $\nu^{1,0}_M(N)$ denotes the holomorphic normal bundle of $N$ in $M$.
\end{prop}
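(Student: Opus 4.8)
The plan is to reduce \eqref{eqn:normal} to a single isomorphism of complex rank-two bundles on $N$, and then obtain that isomorphism by combining the Calabi--Yau condition with formal bundle algebra. First, since $N$ is a complex submanifold, the real normal bundle $\nu_M(N)$ is $J$-invariant and so is naturally a complex vector bundle; its complexification splits into the $\pm i$-eigenspaces of $J$, giving the canonical decomposition $\nu_M(N)\otimes\mathbb{C}\cong\nu^{1,0}_M(N)\oplus\nu^{0,1}_M(N)$ with $\nu^{0,1}_M(N)=\overline{\nu^{1,0}_M(N)}$. It therefore suffices to produce an isomorphism $\overline{\nu^{1,0}_M(N)}\cong\Lambda^{0,2}N\otimes\nu^{1,0}_M(N)$.

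The one piece of genuine geometry is an adjunction-type identity: I claim $\Lambda^2\nu^{1,0}_M(N)\cong\Lambda^{2,0}N$. One way to see this is to take top exterior powers in the holomorphic normal bundle sequence $0\to T^{1,0}N\to T^{1,0}M|_N\to\nu^{1,0}_M(N)\to 0$, which gives $\Lambda^2\nu^{1,0}_M(N)\cong\big(\Lambda^4 T^{1,0}M|_N\big)\otimes\big(\Lambda^2 T^{1,0}N\big)^{*}\cong K_M^{-1}|_N\otimes K_N$, and $K_M$ is (holomorphically) trivial because $M$ is Calabi--Yau, so $\Lambda^2\nu^{1,0}_M(N)\cong K_N=\Lambda^{2,0}N$. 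More concretely, this isomorphism is realised by contraction with the holomorphic volume form, $\nu_1\wedge\nu_2\mapsto(\nu_1\hook\nu_2\hook\Omega)|_N$; this descends to $\Lambda^2\nu^{1,0}_M(N)$ because $\Omega(v,w,X,Y)=0$ whenever $v,X,Y$ all lie in the two-dimensional space $T^{1,0}N$, and it is an isomorphism of line bundles because $\Omega$ is nowhere vanishing and $\nu^{1,0}_M(N)$ is complementary to $T^{1,0}N$.

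Finally I would assemble the result using (i) the elementary fact that for a complex rank-two bundle $E$ there is a canonical isomorphism $E\cong E^{*}\otimes\Lambda^2 E$, and (ii) the Calabi--Yau metric $g$ restricted to $N$, which for any complex line bundle $L$ identifies $\overline{L}\cong L^{*}$ and likewise gives $\overline{\nu^{1,0}_M(N)}\cong(\nu^{1,0}_M(N))^{*}$. Chaining these with the adjunction identity,
\[
\overline{\nu^{1,0}_M(N)}\cong\big(\nu^{1,0}_M(N)\big)^{*}\cong\nu^{1,0}_M(N)\otimes\big(\Lambda^2\nu^{1,0}_M(N)\big)^{*}\cong\nu^{1,0}_M(N)\otimes\big(\Lambda^{2,0}N\big)^{*}\cong\nu^{1,0}_M(N)\otimes\Lambda^{0,2}N,
\]
and combining with the splitting of the first paragraph yields \eqref{eqn:normal}.

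There is no single hard step: the only geometric input is the adjunction isomorphism $\Lambda^2\nu^{1,0}_M(N)\cong\Lambda^{2,0}N$, which is exactly where the Calabi--Yau hypothesis enters, and the rest is bundle bookkeeping. The point to be careful about is precisely that bookkeeping — distinguishing the conjugate bundle $\overline{\nu^{1,0}_M(N)}$ from the dual $(\nu^{1,0}_M(N))^{*}$ (they agree only after a choice of Hermitian metric), and noting accordingly that \eqref{eqn:normal} is an isomorphism of smooth complex vector bundles rather than of holomorphic ones.
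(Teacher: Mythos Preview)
Your proof is correct and follows essentially the same route as the paper: both reduce to showing $\nu^{0,1}_M(N)\cong\Lambda^{0,2}N\otimes\nu^{1,0}_M(N)$ via the adjunction formula together with triviality of $K_M$, and both invoke the Hermitian metric to pass between conjugates and duals. The only difference is packaging: where you chain abstract bundle identities, the paper writes down the explicit map $v\mapsto\tfrac{1}{4}(v\hook\overline{\Omega})^\sharp$ and its inverse---worth noting because that concrete formula is reused verbatim later in the paper (e.g.\ in the linearisation computations of Proposition~\ref{prop:caylin}).
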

\begin{proof}
 Recall that on a complex submanifold we have the following splitting of the complexified normal bundle into holomorphic and antiholomorphic parts
\[
 \nu_M(N)\otimes \mathbb{C}\cong \nu_M^{1,0}(N)\oplus \nu_M^{0,1}(N).
\]
Therefore to prove the proposition, it suffices to show that
\[
 \nu^{0,1}_M(N)\cong \Lambda^{0,2}N\otimes \nu^{1,0}_M(N).
\]
Recalling that by the adjunction formula \cite[Prop 2.2.17]{MR2093043}
\[
K_M|_N\cong \Lambda^{2,0}N\otimes \Lambda^2 \nu^{*1,0}_M(N),
\]
consider the map
\begin{align*}
 \nu^{0,1}_M(N)&\to  \Lambda^{0,2}N\otimes \nu^{1,0}_M(N), \\
v&\mapsto \frac{1}{4}(v\hook \overline{\Omega})^\sharp,
\end{align*}
where $\sharp$ denotes the musical isomorphism $\nu^{*0,1}(N)\to \nu^{1,0}(N)$. It is easy to check that this map is bijective. Its inverse is given by
\begin{align*}
\Lambda^{0,2}N\otimes \nu^{1,0}_M(N)&\to\nu^{0,1}_M(N), \\
\alpha\otimes v&\mapsto -\left[*_N(\alpha\wedge (v\hook \Omega))\right]^\sharp,
\end{align*}
where $*_N$ is the real Hodge star on $N$ and $\sharp:\nu^{*1,0}_M(N)\to \nu^{0,1}_M(N)$ is the musical isomorphism.
\end{proof}

\begin{prop}\label{prop:e}
 Let $N$ be a two-dimensional complex submanifold of a Calabi--Yau four-fold $M$. Denote by $E$ the rank four vector bundle in the splitting 
\[
 \Lambda^2_7|_N= \Lambda^2_+N\oplus E,
\]
where $\Lambda^2_7$ was defined in Proposition \ref{prop:spin7decomp} and $\Lambda^2_+N$ denotes self-dual two-forms on $N$. Then we have that
\begin{equation}\label{eqn:eisom}
 E\otimes \mathbb{C}\cong \Lambda^{0,1}N\otimes \nu^{1,0}_M(N),
\end{equation}
where $\nu^{1,0}_M(N)$ denotes the holomorphic normal bundle of $N$ in $M$.
\end{prop}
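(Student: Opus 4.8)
The plan is to prove the isomorphism by an explicit computation in a local frame adapted simultaneously to $N$, to the metric, and to the complex structure, and then passing to $(0,1)$-types.

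\emph{Step 1: an adapted frame and an explicit description of $E$.} I would fix a local orthonormal frame $\{e_1,\dots,e_8\}$ of $TM$ along $N$ with $e_1,e_2,e_3,e_4$ spanning $TN$, with $e_5,e_6,e_7,e_8$ spanning $\nu_M(N)$, and unitary: $Je_1=e_2$, $Je_3=e_4$, $Je_5=e_6$, $Je_7=e_8$. Since $N$ is a complex submanifold such a frame exists locally, and, after absorbing a phase into it, we may also arrange that $\omega=e^{12}+e^{34}+e^{56}+e^{78}$ and $\Omega=(e^1+ie^2)\wedge(e^3+ie^4)\wedge(e^5+ie^6)\wedge(e^7+ie^8)$, so that by \eqref{eqn:caycy} the Cayley form $\Phi=\tfrac12\omega\wedge\omega+\operatorname{Re}\Omega$ is a fixed explicit $4$-form and $\Phi|_N=e^{1234}=\operatorname{vol}_N$. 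By \eqref{eqn:l27} the seven $2$-forms $\phi_j:=e^1\wedge e^j-e_1\hook(e_j\hook\Phi)$, $j=2,\dots,8$, lie in $\Lambda^2_7$ along $N$; a direct contraction shows each $\phi_j$ is a linear combination of four of the $28$ forms $e^k\wedge e^l$, with the four-element index sets for distinct $j$ pairwise disjoint, so the $\phi_j$ are a basis of the rank-$7$ bundle $\Lambda^2_7|_N$. The same computation shows that $\phi_2,\phi_3,\phi_4$ restrict on $N$ to the standard basis $e^{12}+e^{34},\,e^{13}-e^{24},\,e^{14}+e^{23}$ of $\Lambda^2_+N$ (so $\langle\phi_2,\phi_3,\phi_4\rangle$ is the summand identified with $\Lambda^2_+N$), whereas $\phi_5,\dots,\phi_8$ restrict to zero, so $E=\langle\phi_5,\phi_6,\phi_7,\phi_8\rangle$. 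The crucial structural point is that each $\phi_5,\dots,\phi_8$ is a sum of terms $e^a\wedge e^b$ with $a\in\{1,2,3,4\}$ and $b\in\{5,6,7,8\}$; hence $E\subseteq T^*N\otimes\nu^*_M(N)\subseteq\Lambda^2M|_N$.

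\emph{Step 2: projection onto $(0,1)$-types.} Complexifying, $\big(T^*N\otimes\nu^*_M(N)\big)\otimes\mathbb{C}$ splits as $\big(\Lambda^{1,0}N\oplus\Lambda^{0,1}N\big)\otimes\big(\nu^{*1,0}_M(N)\oplus\nu^{*0,1}_M(N)\big)$, and I claim the $\mathbb{C}$-linear projection $\pi\colon E\otimes\mathbb{C}\to\Lambda^{0,1}N\otimes\nu^{*0,1}_M(N)$ is a bundle isomorphism. To see this I would expand $\phi_5,\dots,\phi_8$ in the complex coframe: writing $\alpha^1=e^1+ie^2,\ \alpha^2=e^3+ie^4$ for the $(1,0)$-coframe of $N$ and $\beta^1=e^5+ie^6,\ \beta^2=e^7+ie^8$ for that of $\nu^{1,0}_M(N)$, the $(0,1)\otimes(0,1)$-components of $\phi_5,\dots,\phi_8$ are, up to a common factor $\tfrac12$,
\[
\bar\alpha^1\wedge\bar\beta^1-\bar\alpha^2\wedge\bar\beta^2,\qquad i\,(\bar\alpha^1\wedge\bar\beta^1+\bar\alpha^2\wedge\bar\beta^2),\qquad \bar\alpha^1\wedge\bar\beta^2+\bar\alpha^2\wedge\bar\beta^1,\qquad i\,(\bar\alpha^1\wedge\bar\beta^2-\bar\alpha^2\wedge\bar\beta^1),
\]
whose $4\times4$ coefficient matrix in the basis $\bar\alpha^p\wedge\bar\beta^q$ is block and has determinant $4\neq0$. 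Hence $\pi$ is fibrewise bijective, so $E\otimes\mathbb{C}\cong\Lambda^{0,1}N\otimes\nu^{*0,1}_M(N)$.

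\emph{Step 3: the normal factor.} Since $g$ vanishes on $T^{1,0}M\times T^{1,0}M$, the metric induces a $\mathbb{C}$-linear isomorphism $\nu^{1,0}_M(N)\xrightarrow{\sim}(\nu^{0,1}_M(N))^*=\nu^{*0,1}_M(N)$; tensoring with $\Lambda^{0,1}N$ and composing with $\pi^{-1}$ gives $E\otimes\mathbb{C}\cong\Lambda^{0,1}N\otimes\nu^{1,0}_M(N)$, as required. Finally one notes that the whole construction---the normal splitting, the type decomposition, $\pi$, and the metric isomorphism---is canonical in $g$ and $J$, so the identification is independent of the chosen frame and patches to a global bundle isomorphism. (Alternatively, the last step can be made holomorphic: $\nu^{*0,1}_M(N)$ is the conjugate of $\nu^{*1,0}_M(N)\cong\nu^{1,0}_M(N)\otimes\Lambda^2\nu^{*1,0}_M(N)$, which by the adjunction formula, triviality of $K_M$, and Proposition \ref{prop:normal} yields $\Lambda^{0,1}N\otimes\nu^{*0,1}_M(N)\cong\Lambda^{0,1}N\otimes\nu^{1,0}_M(N)$.) The main obstacle is entirely computational: carrying out the contractions in \eqref{eqn:l27} to pin down $\phi_5,\dots,\phi_8$ and then evaluating the $4\times4$ determinant; everything else is formal.
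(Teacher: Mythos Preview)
Your argument is correct and in fact produces the same isomorphism as the paper: the map $\pi$ you use in Step~2 is exactly the projection $\pi_{0,1}$ that the paper names as the inverse of its map. The difference is in how the bijectivity is verified. The paper works invariantly: it computes $\pi_7(v\wedge w)$ for $v\wedge w$ in each type summand of $\Lambda^2M|_N\otimes\mathbb{C}$ directly from the decomposition $\Phi=\tfrac12\omega\wedge\omega+\operatorname{Re}\Omega$, using only that $v^\sharp,w^\sharp$ have definite type and that $\omega(a,b)=g(Ja,b)$, and finds that $\pi_7$ carries $\Lambda^{0,1}N\otimes\nu^{*0,1}_M(N)$ isomorphically onto $E\otimes\mathbb{C}$ (while killing the mixed-type summands and sending tangential forms to $\Lambda^2_+N$). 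You instead fix an adapted unitary frame, write down the basis $\phi_5,\dots,\phi_8$ of $E$ from \eqref{eqn:l27}, extract their $(0,1)\otimes(0,1)$ components, and check a $4\times4$ determinant. Your route is more computational and requires the closing remark about frame-independence, but it has the advantage of making the splitting $\Lambda^2_7|_N=\Lambda^2_+N\oplus E$ and the inclusion $E\subseteq T^*N\otimes\nu^*_M(N)$ completely explicit; the paper's route is cleaner, needs no determinant, and makes manifest how the Calabi--Yau structure (via the separate roles of $\omega\wedge\omega$ and $\Omega$) governs the map.
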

\begin{proof}
 Since we have the musical isomorphism $\flat:\nu^{1,0}_M(N)\to \nu^{*0,1}_M(N)$, it suffices to show that
\[
 E\otimes \mathbb{C}\cong \Lambda^{0,1}N\otimes \nu^{*0,1}_M(N).
\]
To see this we will show that the projection map
\[
 \pi_7:\Lambda^2M\to \Lambda^2_7,
\]
given by
\[
 \pi_7(v\wedge w)=\frac{1}{2}\left[v\wedge w+\Phi(v^\sharp,w^\sharp,\,\cdot\,,\,\cdot\,)\right],
\]
is a bijection 
\[
 \Lambda^{0,1}N\otimes \nu^{*0,1}_M(N)\to E\otimes \mathbb{C}.
\]
Let $\omega$ be the Ricci-flat K\"ahler metric on $M$ and choose a holomorphic volume form $\Omega$ so that the Cayley form on $M$ is given by
\[
 \Phi=\frac{1}{2}\omega\wedge \omega+\text{Re }\Omega.
\]
Let $v\otimes w \in \Lambda^{0,1}N\otimes \nu^{*0,1}_M(N)$. Then viewing this as a two-form on $M$, we have that
\[
 \pi_7(v\wedge w)=\frac{1}{2}\left[v\wedge w +\frac{1}{2}\omega\wedge \omega(v^\sharp,w^\sharp,\,\cdot\,,\,\cdot\,)+\frac{1}{2}(\Omega+\overline\Omega)(v^\sharp,w^\sharp,\,\cdot\,,\,\cdot\,)\right].
\]
First note that $v^\sharp$ and $w^\sharp$ are of type $(1,0)$, and so straight away we can eliminate the $\overline{\Omega}$ term. Further, since
\[
 \omega(a,b)=g(Ja,b),
\]
for all vector fields $a$ and $b$ on $M$, we see that
\begin{align*}
 \frac{1}{2}\omega\wedge \omega(v^\sharp,w^\sharp,\,\cdot\,,\,\cdot\,)&=\frac{1}{2}\left[\omega(v^\sharp,w^\sharp)\wedge \omega+\omega \wedge \omega(v^\sharp,w^\sharp)\right. \\
&\left.-\omega(v^\sharp,\,\cdot\,)\wedge \omega(w^\sharp, \,\cdot\,)+\omega(w^\sharp,\,\cdot\,)\wedge \omega(v^\sharp,\,\cdot\,)\right] \\
&=\frac{1}{2}\left[-g(Jv^\sharp,\,\cdot\,)\wedge g(Jw^\sharp,\,\cdot\,)+g(Jw^\sharp,\,\cdot\,)\wedge g(Jv^\sharp,\,\cdot\,)\right] \\
&=\frac{1}{2}\left[g(v^\sharp,\,\cdot\,)\wedge g(w^\sharp,\,\cdot\,)-g(w^\sharp,\,\cdot\,)\wedge g(v^\sharp,\,\cdot\,)\right] \\
&=\frac{1}{2}\left[v\wedge w-w\wedge v\right] \\
&=v\wedge w,
\end{align*}
since $v^\sharp$ and $w^\sharp$ are of type $(1,0)$ and using the definition of the musical isomorphism. So we have shown that
\[
 \pi_7(v\wedge w)=v\wedge w +\frac{1}{4}\Omega(v^\sharp,w^\sharp,\,\cdot\,,\,\cdot\,),
\]
where we notice that the second term lies in $\Lambda^{1,0}N\otimes \nu^{*1,0}_M(N)$ when restricted to $N$. It can be shown similarly that for $v\in \Lambda^{1,0}N$ and $w\in \nu^{*1,0}_M(N)$ that
\[
 \pi_7(v\wedge w)=v\wedge w +\frac{1}{4}\overline{\Omega}(v^\sharp,w^\sharp,\,\cdot\,,\,\cdot\,),
\]
and so we see that if $\sigma\in\Lambda^{1,0}N\otimes \nu^{*1,0}_M(N)$ or $\Lambda^{0,1}N\otimes \nu^{*0,1}_M(N)$ then $\pi_7(\sigma)\in \Lambda^{1,0}N\otimes \nu^{*1,0}_M(N)\oplus\Lambda^{0,1}N\otimes \nu^{*0,1}_M(N)$. In particular, $\pi_7(\Lambda^{1,0}N\otimes \nu^{*1,0}_M(N))=\pi_7(\Lambda^{0,1}N\otimes \nu^{*0,1}_M(N))$.

A similar calculation yields that for all $\sigma_1\in \Lambda^{1,0}N\otimes \nu^{*0,1}_M(N), \sigma_2\in \Lambda^{0,1}N\otimes \nu^{*1,0}_M(N)$
\[
 \pi_7(\sigma_1)=0=\pi_7(\sigma_2),
\]
and therefore to check that $E\otimes\mathbb{C}=\pi_7(\Lambda^{0,1}N\otimes \nu^{*0,1}_M(N))$ it suffices to check that $\pi_7(\Lambda^{0,2}N)+ \pi_7(\Lambda^{2,0}N)+\pi_7(\Lambda^{1,1}N)=\Lambda^2_+N$. But since if $v\wedge w$ is a unit element of $\Lambda^{0,2}N,\Lambda^{2,0}N$ or $\Lambda^{1,1}N$ then
\begin{align*}
 \pi_7(v\wedge w)|_N&=\frac{1}{2}\left[v\wedge w+\frac{1}{2}\omega\wedge \omega(v^\sharp,w^\sharp,\,\cdot\,,\,\cdot\,)|_N\right] \\
&=\frac{1}{2}\left[v\wedge w +\text{vol}_N(v^\sharp,w^\sharp,\,\cdot\,,\,\cdot\,) \right]\\
&=\frac{1}{2}\left[v\wedge w+*_N(v\wedge w)\right],
\end{align*}
this is clear. Therefore $E\otimes \mathbb{C}=\pi_7(\Lambda^{0,1}N\otimes \nu^{*0,1}_M(N))$. The inverse map to $\pi_7$ is given by the projection map
\[
 \pi_{0,1}:E\otimes \mathbb{C}\to \Lambda^{0,1}N\otimes \nu^{*0,1}_M(N).
\]
\end{proof}

\subsection{Constructing an operator}
We can use Proposition \ref{prop:tau} to construct a partial differential operator acting on normal vector fields on a compact complex surface $N$ whose kernel will be precisely the normal vector fields on $N$ that yield Cayley deformations of $N$.
\begin{prop}\label{prop:caypde}
 Let $(M,J,\omega,\Omega)$ be a four-dimensional Calabi--Yau manifold with compact two-dimensional complex submanifold $N$. Let $U$ be the image of $V$ from the tubular neighbourhood theorem \ref{thm:cpttubnbhd} under the isomorphism in Proposition \ref{prop:normal}. For a normal vector field $v$ write $N_v:=\exp_v(N)$. The moduli space of Cayley deformations of $N$ in $M$ is locally homeomorphic to the kernel of the following partial differential operator
\begin{align}\nonumber
 F:C^\infty(U)&\to C^\infty(\Lambda^{0,1}N\otimes \nu^{1,0}_M(N)), \\ \label{eqn:caypde}
v&\mapsto \Psi\left(\pi(*_N\exp^*_{\tilde{v}}(\tau|_{N_{\tilde{v}}}))\right),
\end{align}
where $\tau$ is defined in Proposition \ref{prop:tau},
\begin{equation}\label{eqn:27split}
\Lambda^2_7|_N=\Lambda^2_+N\oplus E,
\end{equation}
with $\pi:\Lambda^2_7|_N\to E$ the projection map, $\tilde{v}$ denotes the normal vector field corresponding to $v$ under the isomorphism given in Proposition \ref{prop:normal} and $\Psi$ denotes the isomorphism given in Proposition \ref{prop:e}.
\end{prop}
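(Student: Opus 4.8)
The plan is to package a tubular neighbourhood argument together with the characterisation of Cayley submanifolds from Proposition~\ref{prop:tau}, and then to recognise the resulting equation, after applying the bundle isomorphisms of Propositions~\ref{prop:normal} and \ref{prop:e}, as $F(v)=0$.

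First I would invoke Theorem~\ref{thm:cpttubnbhd}: after shrinking $V$ (and hence $U$) if necessary, every deformation $N'$ of $N$ close to $N$ is of the form $N_{\tilde v}=\exp_{\tilde v}(N)$ for a unique small section $\tilde v$ of $\nu_M(N)$, and under the isomorphism of Proposition~\ref{prop:normal} this $\tilde v$ corresponds to a unique $v\in U$. This gives a homeomorphism, in the natural ($C^1$, say) topologies, between a neighbourhood of $N$ in the space of deformations of $N$ and the set $U$, carrying $\mathcal{M}_{\textnormal{Cay}}(N)$ onto $\{v\in U : N_{\tilde v}\text{ is Cayley}\}$. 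By Proposition~\ref{prop:tau}, $N_{\tilde v}$ is Cayley if and only if $\tau|_{N_{\tilde v}}\equiv 0$, where $\tau|_{N_{\tilde v}}$ is a smooth section of $\Lambda^4 N_{\tilde v}\otimes\Lambda^2_7|_{N_{\tilde v}}$.

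Next I would transport this equation onto $N$ itself. Pulling back along $\exp_{\tilde v}\colon N\to N_{\tilde v}$, using parallel transport along the radial geodesics $t\mapsto\exp_{t\tilde v}(p)$ to identify $\exp_{\tilde v}^*\bigl(\Lambda^2_7|_{N_{\tilde v}}\bigr)$ with $\Lambda^2_7|_N$ (a vector bundle isomorphism depending smoothly on $v$, since $\Phi$ is parallel and $\Lambda^2_7$ is $Spin(7)$-invariant), and then applying the real Hodge star $*_N\colon\Lambda^4 N\to\Lambda^0 N$, yields a section $\widetilde F(v)\in C^\infty(\Lambda^2_7|_N)$ that depends smoothly on $v$ and vanishes if and only if $\tau|_{N_{\tilde v}}$ does, because each step is a fibrewise linear isomorphism. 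With respect to the splitting \eqref{eqn:27split} one then has $F=\Psi\circ\pi\circ\widetilde F$, so the proposition reduces to showing that $\widetilde F(v)\equiv 0$ if and only if $\pi(\widetilde F(v))\equiv 0$; equivalently, that the self-dual component $(\widetilde F(v))_{\Lambda^2_+N}$ vanishes identically for every $v\in U$.

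This last claim is the heart of the matter and where the real work lies. I would establish it pointwise: for fixed $p$, choose an oriented orthonormal frame of $T_{\exp_{\tilde v}(p)}N_{\tilde v}$ adapted to $N$, expand $\tau|_{N_{\tilde v}}$ using the explicit formula \eqref{eqn:taunice}, and track the components of the resulting elements $\pi_7(e^1\wedge e^i)$ under the decomposition $\Lambda^2_7|_N=\Lambda^2_+N\oplus E$. The computations in the proof of Proposition~\ref{prop:e} already identify which pieces of a $2$-form land in $\Lambda^2_+N$ and which in $E$ under $\pi_7$; combining these with the antisymmetrisation over the four tangential slots built into the definition of $\tau$ should force the $\Lambda^2_+N$-contribution to cancel, so that $\widetilde F$ genuinely takes values in $E$. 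Granting this, vanishing of $F(v)$ is equivalent to vanishing of $\tau|_{N_{\tilde v}}$, hence to $N_{\tilde v}$ being Cayley, and the homeomorphism from the first step identifies $\mathcal{M}_{\textnormal{Cay}}(N)$ locally with $\ker F$. The main obstacle is precisely this pointwise computation with $\Phi$ and $Spin(7)$-representation theory; the remainder is a routine tubular-neighbourhood and bundle-identification package, together with the (clear) smoothness of $F$, which is assembled from $\exp$, $g$ and $\Phi$.
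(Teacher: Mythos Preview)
Your setup is correct: the tubular neighbourhood argument, the pullback of $\tau$ to $N$, and the reduction to showing that $\pi(\widetilde F(v))=0$ implies $\widetilde F(v)=0$ are exactly the steps the paper takes. The gap is in your proposed mechanism for this last implication.

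You assert that the equivalence $\widetilde F(v)=0\iff\pi(\widetilde F(v))=0$ is the same as saying the $\Lambda^2_+N$-component of $\widetilde F(v)$ vanishes identically for every $v\in U$, and you propose to establish the latter by representation-theoretic bookkeeping together with the antisymmetrisation in the definition of $\tau$. But the $\Lambda^2_+N$-component does \emph{not} vanish identically. Writing the tangent space of $N_{\tilde v}$ at a point as a normal graph $v_j=e_j+\sum_{i=5}^8\lambda^i_j e_i$ over the Cayley plane $T_pN$, the $\Lambda^2_+N$-piece of $\tau(v_1,v_2,v_3,v_4)$ is a nontrivial homogeneous quadratic expression in the $\lambda^i_j$ (three such expressions, one for each basis element of $\Lambda^2_+$). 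These are not algebraic identities; they vanish only \emph{after} one imposes the four equations coming from the $E$-component. So the implication you need is genuinely nonlinear-algebraic: four equations in sixteen unknowns force three further equations.

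What the paper does at this point is exactly that brute-force computation: write out the four equations $\pi(\tau(v_1,v_2,v_3,v_4))=0$ explicitly (they have linear and cubic terms in the $\lambda^i_j$), solve them for four of the variables, and substitute into the three quadratic $\Lambda^2_+$-equations to see that they collapse to zero. The antisymmetry and the $\pi_7$-computations from Proposition~\ref{prop:e} tell you \emph{which} components land where, but they cannot by themselves force the self-dual part to be zero for arbitrary $v$; you must use the hypothesis $\pi(\widetilde F(v))=0$. Your outline should be amended to reflect this: the ``heart of the matter'' is not a cancellation but an elimination argument.
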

\begin{proof}
By Proposition \ref{prop:tau} it is clear that a normal vector field $\tilde{v}$ gives a Cayley deformation of $N$ if, and only if, $*_N\exp^*_{\tilde{v}}(\tau|_{N_{\tilde{v}}})=0$. By Propositions \ref{prop:normal} and Proposition \ref{prop:e}, it remains to show that $\pi(*_N\exp_{\tilde{v}}^*(\tau|_{N_{\tilde{v}}}))=0$ implies that $*_N\exp_{\tilde{v}}^*(\tau|_{N_{\tilde{v}}})=0$. For this we will employ a local argument.
 
At each point of $N$, we can write the tangent space of the (small) deformation $N_{\tilde{v}}$ as a normal graph over the tangent space of $N$. So it suffices to prove this proposition for a normal graph over a Cayley plane in $\mathbb{R}^8$. Suppose that this graph is described by, for $j=1,\dots , 4$
\[
 v_j=e_j+\sum_{i=5}^8\lambda^i_je_i.
\]
To prove the proposition we will suppose that
\begin{equation}\label{eqn:tauproj}
\pi(\tau_{f(y)}(v_1,v_2,v_3,v_4))=0, 
\end{equation}
and show that
\begin{equation}\label{eqn:tauorig}
\tau_{f(y)}(v_1,v_2,v_3,v_4)=0.
\end{equation}
Equation \eqref{eqn:tauproj} gives us the following four equations
\begin{align}\nonumber
 \lambda^1_5+\lambda^2_6+\lambda^3_7+\lambda^4_8-&\sum_{6,7,8}\epsilon_{pqr}\lambda^2_p\lambda^3_q\lambda^4_r-\sum_{5,7,8}\epsilon_{pqr}\lambda^1_p\lambda^3_q\lambda^4_r \\ \nonumber -&\sum_{5,6,8}\epsilon_{pqr}\lambda^1_p\lambda^2_q\lambda^4_r-\sum_{5,6,7}\epsilon_{pqr}\lambda^1_p\lambda^2_q\lambda^3_r=0,\\ \nonumber
\lambda^1_6-\lambda^2_5-\lambda^3_8+\lambda^4_7+&\sum_{5,7,8}\epsilon_{pqr}\lambda^2_p\lambda^3_q\lambda^4_r-\sum_{6,7,8}\epsilon_{pqr}\lambda^1_p\lambda^3_q\lambda^4_r  \\
\nonumber
-&\sum_{5,6,7}\epsilon_{pqr}\lambda^1_p\lambda^2_q\lambda^4_r+\sum_{5,6,8}\epsilon_{pqr}\lambda^1_p\lambda^2_q\lambda^3_r=0, 
\end{align}
\begin{align}
 \nonumber
\lambda^1_7+\lambda^2_8-\lambda^3_5-\lambda^4_6-&\sum_{5,6,8}\epsilon_{pqr}\lambda^2_p\lambda^3_q\lambda^4_r-\sum_{5,6,7}\epsilon_{pqr}\lambda^1_p\lambda^3_q\lambda^4_r \\ \nonumber
+&\sum_{6,7,8}\epsilon_{pqr}\lambda^1_p\lambda^2_q\lambda^4_r+\sum_{5,7,8}\epsilon_{pqr}\lambda^1_p\lambda^2_q\lambda^3_r=0, \\ \label{eqn:taunonlin}
\lambda^1_8-\lambda^2_7+\lambda^3_6-\lambda^4_5+&\sum_{5,6,7}\epsilon_{pqr}\lambda^2_p\lambda^3_q\lambda^4_r-\sum_{5,6,8}\epsilon_{pqr}\lambda^1_p\lambda^3_q\lambda^4_r \\ \nonumber
+&\sum_{5,7,8}\epsilon_{pqr}\lambda^1_p\lambda^2_q\lambda^4_r-\sum_{6,7,8}\epsilon_{pqr}\lambda^1_p\lambda^2_q\lambda^3_r=0,
\end{align}
where $\epsilon_{pqr}$ is skew-symmetric in $p,q,r$ and $\epsilon_{pqr}=1$ when $p<q<r$. Notice that if $\lambda^i_j$ is a linear term, then there will be cubic terms of the form $\pm\lambda^l_p\lambda^m_q\lambda^n_r$, where $\{l,m,n\}\in\{1,2,3,4\}\backslash \{i\}$ and $\{p,q,r\}\in\{5,6,7,8\}\backslash \{j\}$.

Using your favourite equation solving software, we can solve for $\lambda^1_5,\lambda^1_6,\lambda^1_7$ and $\lambda^1_8$, which gives us four very complicated expressions which we will not give here.
To show that Equation \eqref{eqn:tauorig} is satisfied, it remains to show that 
\begin{align*}
 \sum_{\{i,j\}=\{5,7\},\{6,8\}} \epsilon_{ij}(\lambda^1_i\lambda^4_j+\lambda^2_i\lambda^3_j)+\sum_{\{i,j\}=\{6,7\},\{5,8\}} \epsilon_{ij}(\lambda^1_i\lambda^3_j-\lambda^2_i\lambda^4_j)=0, \\
\sum_{\{i,j\}=\{5,6\},\{7,8\}} \epsilon_{ij}(\lambda^1_i\lambda^4_j+\lambda^2_i\lambda^3_j)-\sum_{\{i,j\}=\{5,8\},\{6,7\}} \epsilon_{ij}(\lambda^1_i\lambda^2_j+\lambda^3_i\lambda^4_j)=0, \\
\sum_{\{i,j\}=\{5,6\},\{7,8\}} \epsilon_{ji}(\lambda^2_i\lambda^4_j-\lambda^1_i\lambda^3_j)-\sum_{\{i,j\}=\{5,7\},\{6,8\}} \epsilon_{ij}(\lambda^1_i\lambda^2_j+\lambda^3_i\lambda^4_j)=0,
\end{align*}
where $\epsilon_{ij}=-\epsilon_{ji}$ and $\epsilon_{75}=\epsilon_{68}=\epsilon_{56}=\epsilon_{67}=\epsilon_{78}=\epsilon_{58}=1$. Substituting in the values of $\lambda^1_5,\lambda^1_6,\lambda^1_7$ and $\lambda^1_8$ we found when we solved Equations \eqref{eqn:taunonlin}, these three equations vanish. Therefore, $\tau_{f(y)}(v_1,v_2,v_3,v_4)=0$ if, and only if, $\pi\circ\tau_{f(y)}(v_1,v_2,v_3,v_4)=0$. Since $y\in N$ and $N'$ were arbitrary, it follows that the kernel of $\pi(*_N\exp^*_{\tilde{v}}(\tau|_{N_{\tilde{v}}}))$ and $*_N\exp^*_{\tilde{v}}(\tau|_{N_{\tilde{v}}})$ are the same.
\end{proof}
\begin{rem}
Proposition \ref{prop:caypde} includes a proof of a slightly more general version of a result of Harvey and Lawson \cite[IV.2.C Thm 2.20]{MR666108}.
\end{rem}

\subsection{Properties of the partial differential operator \texorpdfstring{$F$}{F}}
We will now find the linear part of the operator \eqref{eqn:caypde}.

\begin{prop}\label{prop:caylin}
 Let $(M,J,\omega,\Omega)$ be a four-dimensional Calabi--Yau manifold and let $N$ be a two-dimensional compact complex submanifold of $M$. The linearisation of the operator \eqref{eqn:caypde} at zero is given by the elliptic operator
\begin{equation}\label{eqn:infop}
 \infop:C^\infty(\nu^{1,0}_M(N)\oplus \Lambda^{0,2}N\otimes \nu^{1,0}_M(N))\to C^\infty(\Lambda^{0,1}N\otimes \nu^{1,0}_M(N)).
\end{equation}
\end{prop}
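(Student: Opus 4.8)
The plan is to compute the linearisation of $F$ at $0$ directly, reusing the local set-up from the proof of Proposition~\ref{prop:caypde}, and then to recognise the answer as $\infop$ by means of the isomorphisms of Propositions~\ref{prop:normal} and~\ref{prop:e}. First I would check that $F$ is a first-order operator: the $t$-derivative at $t=0$ of $v\mapsto *_N\exp^*_{\tilde v}(\tau|_{N_{\tilde v}})$ involves only the tangent planes to $N_{\tilde v}$, hence at most one derivative of $\tilde v$, and since $\tau|_N\equiv 0$ (because $N$ is Cayley) there is no constant term. Thus $\mathrm{d}F_0=\Psi\circ\pi\circ *_N\circ L$, where $L\colon\tilde v\mapsto\frac{d}{dt}\big|_{0}\exp^*_{t\tilde v}(\tau|_{N_{t\tilde v}})$ is a first-order linear operator on normal vector fields; it acts a priori on real normal fields, and after complexifying and applying Proposition~\ref{prop:normal} its domain becomes $C^\infty(\nu^{1,0}_M(N)\oplus\Lambda^{0,2}N\otimes\nu^{1,0}_M(N))$, as in the statement. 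It then remains to identify $\mathrm{d}F_0$ with $\infop$.

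For the computation I would fix $p\in N$ and an oriented orthonormal frame $e_1,\dots,e_8$ with $e_1,\dots,e_4$ spanning $T_pN$ and adapted to both $J$ and the standard form $\Phi_0$ of~\eqref{eqn:phi0}, and write the tangent planes of $N_{\tilde v}$ as graphs $v_j=e_j+\sum_{i=5}^8\lambda^i_je_i$ exactly as in the proof of Proposition~\ref{prop:caypde}. The $\lambda^i_j$ are first-order expressions in $\tilde v$ whose principal part is $g(\nabla_{e_j}\tilde v,e_i)$, the remaining linear-in-$\tilde v$ terms being controlled by the second fundamental form of $N$ in $M$. Projecting onto $E$ discards the three self-dual equations listed at the end of that proof, which are quadratic in the $\lambda^i_j$ and so drop out of the linearisation; what survives is precisely the linear parts of the four equations~\eqref{eqn:taunonlin}, that is, the four $\mathbb{R}$-linear combinations of the $\lambda^i_j$ written there.

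Next I would translate these four combinations into Dolbeault terms. Under Proposition~\ref{prop:normal} a section of $\nu^{1,0}_M(N)$ is the $(1,0)$-part of $\tilde v$ while a section of $\Lambda^{0,2}N\otimes\nu^{1,0}_M(N)$ encodes the $(0,1)$-part through $v\mapsto\frac14(v\hook\overline{\Omega})^\sharp$, and under $\Psi$ with the splitting~\eqref{eqn:27split} the target $\Lambda^{0,1}N\otimes\nu^{1,0}_M(N)$ is identified with $E\otimes\mathbb{C}$. Writing $\bar\partial$ on $C^\infty(\nu^{1,0}_M(N))$ and $\bar\partial^*$ on $C^\infty(\Lambda^{0,2}N\otimes\nu^{1,0}_M(N))$ in the chosen frame and matching them term by term with the four combinations above should give $\mathrm{d}F_0=\infop$. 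At the level of principal symbols this amounts to the flat-space identity that the linear parts of~\eqref{eqn:taunonlin} reproduce, up to a nonzero constant, the map $a\oplus w\mapsto\xi^{0,1}\wedge a+\xi^{0,1}\hook w$; the zeroth-order parts must then also be checked to coincide, and this is exactly where the fact that $N$ is a \emph{complex} submanifold of the K\"{a}hler manifold $M$ enters, the second-fundamental-form correction in the $\lambda^i_j$ together with the torsion of $\Omega$ reassembling into the $\bar\partial$-operator of the holomorphic normal bundle.

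Finally, ellipticity is immediate once the identification is in place: $\infop$ rolls up the elliptic Dolbeault complex $C^\infty(\nu^{1,0}_M(N))\xrightarrow{\bar\partial}C^\infty(\Lambda^{0,1}N\otimes\nu^{1,0}_M(N))\xrightarrow{\bar\partial}C^\infty(\Lambda^{0,2}N\otimes\nu^{1,0}_M(N))$ of the holomorphic normal bundle, whose symbol sequence is exact off the zero section; equivalently, for $0\neq\xi\in T^*N$ the symbol $a\oplus w\mapsto\xi^{0,1}\wedge a+\xi^{0,1}\hook w$ is an isomorphism with inverse $\beta\mapsto|\xi^{0,1}|^{-2}\bigl((\xi^{0,1}\hook\beta)\oplus(\xi^{0,1}\wedge\beta)\bigr)$, using $|\xi^{0,1}|^2=\frac12|\xi|^2\neq0$. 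I expect the main obstacle to be the bookkeeping of the third paragraph: carrying consistent orientation, frame and musical-isomorphism conventions through the two non-trivial identifications of Propositions~\ref{prop:normal} and~\ref{prop:e} (both built from $\Omega$ and the Hodge star), and confirming that the full operator --- not merely its principal symbol --- agrees with $\bar\partial+\bar\partial^*$.
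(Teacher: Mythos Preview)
Your plan is correct and would work, but the paper takes a different and somewhat cleaner route. You propose to linearise by re-using the pointwise graph description from the proof of Proposition~\ref{prop:caypde}: write the tangent planes of $N_{t\tilde v}$ as graphs with coefficients $\lambda^i_j$, differentiate in $t$, and read off the four linear combinations in~\eqref{eqn:taunonlin}, then match them against $\bar\partial+\bar\partial^*$ through Propositions~\ref{prop:normal} and~\ref{prop:e}. The paper instead recognises $dF|_0(\tilde v)=\Psi(*_N\mathcal{L}_{\tilde v}\tau|_N)$ and expands the Lie derivative via the Levi--Civita connection; the term $(\nabla_{\tilde v}\tau)(e_1,e_2,e_3,e_4)$ is then shown to vanish because $\Phi$ is parallel, leaving the single clean expression $\sum_{i=1}^4\pi_7(e^i\wedge(\nabla^\perp_{e_i}\tilde v)^\flat)$, which is identified with $\bar\partial v_1+\bar\partial^*\frac14(v_2\hook\overline\Omega)^\sharp$ directly from the explicit description of $\pi_7$ in Proposition~\ref{prop:e}. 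The advantage of the paper's approach is that it handles in one stroke precisely the bookkeeping you anticipate as the main obstacle: the zeroth-order contributions (variation of $\tau$ along the normal direction, second-fundamental-form terms) are absorbed into $\nabla_{\tilde v}\tau$ and killed by $\nabla\Phi=0$, so one never has to chase frame or exponential-map corrections separately. Your approach would need this same fact, either by the paper's computation or by choosing a frame parallel along the normal geodesic so that $\tau$ remains standard; your phrase ``torsion of $\Omega$'' seems to be gesturing at this, but note that $\Omega$ and $\Phi$ are genuinely parallel here, so there is no torsion term---what remains is simply zero. Your ellipticity argument via the rolled-up Dolbeault complex is correct and more explicit than the paper, which just asserts ellipticity.
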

\begin{rem}
 We call the forms in the kernel of $\infop$ \emph{infinitesimal Cayley deformations} of $N$ in $M$.
\end{rem}
\begin{proof}
By Proposition \ref{prop:normal}, we can write
\[
v=v_1\oplus \frac{1}{4}(v_2\hook \overline{\Omega})^\sharp, \quad \tilde{v}=v_1\oplus v_2.
\]
To see that \eqref{eqn:infop} is the linearisation of the operator $F$ in Equation \eqref{eqn:caypde}, we make an explicit computation. By definition, we have that
\begin{align*}
 dF|_0({\tilde{v}})=\frac{d}{dt}F(t{\tilde{v}})|_{t=0}=\Psi\left(*_N\mathcal{L}_{\tilde{v}}\tau|_N\right),
\end{align*}
by definition of the Lie derivative. We have that
\[
 *_N\mathcal{L}_{\tilde{v}}\tau|_N=(\mathcal{L}_{\tilde{v}}\tau)(e_1,e_2,e_3,e_4),
\]
where $\{e_1,\dots,e_4\}$ is an orthonormal frame for $TN$ with $\text{vol}_N(e_1,e_2,e_3,e_4)=1$, and so we may apply a formula linking the Lie derivative to the Levi-Civita connection such as \cite[Eqn (4.3.26)]{MR2829653} to find that
\begin{align*}
 (\mathcal{L}_{\tilde{v}}\tau)(e_1,e_2,e_3,e_4)&=(\nabla_{\tilde{v}}\tau)(e_1,e_2,e_3,e_4)+\tau(\nabla_{e_1}{\tilde{v}},e_2,e_3,e_4) \\
&-\tau(\nabla_{e_2}{\tilde{v}},e_1,e_3,e_4)+\tau(\nabla_{e_3}{\tilde{v}},e_1,e_2,e_4) \\
&-\tau(\nabla_{e_4}{\tilde{v}},e_1,e_2,e_3).
\end{align*}
We can write the Levi-Civita connection on $TM|_N$ as $\nabla=\nabla^T+\nabla^\perp$, where $\nabla^T$ is the projection of $\nabla$ onto $T^*N\otimes TN$ and $\nabla^\perp$ is the projection of $\nabla$ onto $T^*N\otimes \nu_M(N)$. Then
\[
 \tau(\nabla^T_{e_i}{\tilde{v}},e_j,e_k,e_l)=0,
\]
for all $\{i,j,k,l\}=\{1,2,3,4\}$ (since $N$ is Cayley), and therefore we have that
\begin{align*}
(\mathcal{L}_{\tilde{v}}\tau)(e_1,e_2,e_3,e_4)&=(\nabla_{\tilde{v}}\tau)(e_1,e_2,e_3,e_4)+\tau(\nabla^\perp_{e_1}{\tilde{v}},e_2,e_3,e_4) \\
&-\tau(\nabla^\perp_{e_2}{\tilde{v}},e_1,e_3,e_4)+\tau(\nabla^\perp_{e_3}{\tilde{v}},e_1,e_2,e_4) \\
&-\tau(\nabla^\perp_{e_4}{\tilde{v}},e_1,e_2,e_3).
\end{align*}
We can calculate that, since $\text{vol}_N=e^1\wedge e^2\wedge e^3\wedge e^4$,
\[
 \begin{matrix}
  \Phi(\,\cdot\, ,e_1, e_2, e_3)=-e^4, & \Phi(\, \cdot\, ,e_1, e_2, e_4)=e^3, \\
\Phi(\, \cdot\, ,e_1, e_3, e_4)=-e^2, & \Phi(\,\cdot\, ,e_2, e_3,e_4)=e^1.
 \end{matrix}
\]
Therefore by definition of $\tau$ (see Proposition \ref{prop:tau}) we have that
\begin{equation}\label{eqn:linmid}
 (\mathcal{L}_{\tilde{v}}\tau)(e_1,e_2,e_3,e_4)=(\nabla_{\tilde{v}}\tau)(e_1,e_2,e_3,e_4)+\sum_{i=1}^4\pi_7(e^i\wedge(\nabla^\perp_{e_i}{\tilde{v}})^\flat).
\end{equation}
It remains to show that since $\Phi$ is parallel, $\tau$ is parallel.
Extending $e_1,\dots, e_4$ to an orthonormal frame $e_1,\dots,e_8$ for $TM|_N$ and using Equation \eqref{eqn:taunice}
\begin{align*}
\nabla_{\tilde{v}}\tau&=\sum_{i=2}^8\nabla_{\tilde{v}}\left[e^i\wedge (e_1\hook \Phi)-e^1\wedge (e_i\hook\Phi)\right]\otimes \pi_7(e^1\wedge e^i) \\
&+\sum_{i=2}^8\left[e^i\wedge (e_1\hook \Phi)-e^1\wedge (e_i\hook\Phi)\right]\otimes \nabla_{\tilde{v}}\pi_7(e^1\wedge e^i).
\end{align*}
We can see that the second sum in the above expression will vanish when evaluated on $e_1,e_2,e_3,e_4$, so it remains to compute
\[
\nabla_{\tilde{v}}\left[e^i\wedge (e_1\hook \Phi)-e^1\wedge (e_i\hook\Phi)\right],
\]
for $i=2,\dots ,8$. Since 
\[
\nabla_{\tilde{v}}(e^1\wedge (e_i\hook \Phi))=(\nabla_{\tilde{v}} e^1)\wedge (e_i\hook \Phi)+e^1\wedge (\nabla_{\tilde{v}}e_i\hook \Phi)+e^1\wedge (e_i\hook\nabla_{\tilde{v}}\Phi),
\]
we find that
\begin{align*}
\nabla_{\tilde{v}}(e^1\wedge (e_i\hook \Phi))(e_1,e_2,e_3,e_4)&=e^2(e_i)(\nabla_{\tilde{v}}e^1)(e_2)+e^3(e_i)(\nabla_{\tilde{v}} e^1)(e_3) \\
&+e^4(e_i)(\nabla_{\tilde{v}}e^1)(e_4)+\Phi(\nabla_{\tilde{v}} e_i,e_2,e_3,e_4) \\
&+(\nabla_{\tilde{v}}\Phi)(e_i,e_2,e_3,e_4).
\end{align*}
Similarly,
\begin{align*}
\nabla_{\tilde{v}}(e^i\wedge (e_1\hook \Phi))(e_1,e_2,e_3,e_4)&=(\nabla_{\tilde{v}} e^i)(e_1)-e^i(e_2)\Phi(\nabla_{\tilde{v}}e_1,e_1,e_3,e_4) \\
&+e^i(e_3)\Phi(\nabla_{\tilde{v}}e_1,e_1,e_2,e_4) \\
&-e^i(e_4)\Phi(\nabla_{\tilde{v}}e_1,e_1,e_2,e_3).
\end{align*}
Using the explicit expression for $\Phi$, we have that
\begin{align*}
(\nabla_{\tilde{v}}\tau)(e_1,e_2,e_3,e_4)&=\sum_{i=2}^8\left[e^i(e_2)e^2(\nabla_{\tilde{v}}e_1)-e^2(e_i)(\nabla_{\tilde{v}}e^1)(e_2) \right. \\
+e^i(e_3)e^3(\nabla_{\tilde{v}}e_1)&-e^3(e_i)(\nabla_{\tilde{v}}e^1)(e_3)+e^i(e_4)e^4(\nabla_{\tilde{v}}e_1)-e^4(e_i)(\nabla_{\tilde{v}}e^1)(e_4) \\
-e^1(\nabla_{\tilde{v}} e_i)&\left.+(\nabla_{\tilde{v}} e^i)(e_1)-(\nabla_{\tilde{v}}\Phi)(e_i,e_2,e_3,e_4)\right]\otimes \pi_7(e^1\wedge e^i).
\end{align*}
Finally, note that since the metric $g$ on $X$ is parallel with respect to the Levi-Civita connection,
\begin{align*}
(\nabla_{\tilde{v}}e^j)(e_k)&=-e^j(\nabla_{\tilde{v}} e_k)=-g(\nabla_{\tilde{v}} e_k,e_j)=g(e_k,\nabla_{\tilde{v}} e_j)=e^k(\nabla_{\tilde{v}} e_j) \\
&=-(\nabla_{\tilde{v}} e^k)(e_j),
\end{align*}
and so we find that
\[
(\nabla_{\tilde{v}}\tau)(e_1,e_2,e_3,e_4)=\sum_{i=5}^8-(\nabla_{\tilde{v}}\Phi)(e_i,e_1,e_2,e_3,e_4)\otimes\pi_7(e^1\wedge e^i),
\]
which vanishes since $\Phi$ is parallel.

It remains to show that 
\[
\Psi\left(\sum_{i=1}^4\pi_7(e^i\wedge(\nabla^\perp_{e_i}(v_1+v_2))^\flat)\right)=\bar\partial v_1+\bar\partial^*\frac{1}{4}(v_2\hook \overline{\Omega})^\sharp.
\]
We have that
\[
\sum_{i=1}^4 e^i\wedge \nabla_{e_i}^\perp v_1=:\partial v_1+\bar\partial v_1.
\]
By Proposition \ref{prop:e}, we have that
\[
\Psi\circ\pi_7((\partial v_1)^\flat+(\bar\partial v_1)^\flat)=\Psi\circ\pi_7 ((\bar\partial v_1)^\flat)=\bar\partial v_1,
\]
where $\flat:\nu^{1,0}_M(N)\to \nu^{*0,1}_M(N)$. Now since $v_2$ is of type $(0,1)$,
\begin{align*}
\sum_{i=1}^4\pi_7(e^i\wedge(\nabla^\perp_{e_i}v_2)^\flat)&=\sum_{i=1}^4\frac{1}{2}\left[e^i\wedge(\nabla^\perp_{e_i}v_2)^\flat+\frac{1}{2}\omega\wedge\omega(e_i,\nabla_{e_i}^\perp v_2,\,\cdot\, ,\, \cdot\, )\right. \\
&\left.+\frac{1}{2}\overline{\Omega}(e_i,\nabla_{e_i}^\perp v_2,\,\cdot\, ,\, \cdot\, )\right] \\
&=\sum_{i=1}^4 \frac{1}{2}g(e_i+iJe_i,\, \cdot\, )\wedge (\nabla_{e_i}^\perp v_2)^\flat+\frac{1}{4}\overline{\Omega}(e_i,\nabla_{e_i}^\perp v_2,\,\cdot\, ,\, \cdot\, ),
\end{align*}
and so under an application of $\Psi$, we have that
\[
\Psi\left(\sum_{i=1}^4\pi_7(e^i\wedge(\nabla^\perp_{e_i}v_2)^\flat)\right)=\frac{1}{4}\overline{\Omega}(e_i,\nabla_{e_i}^\perp v_2,\,\cdot\, ,\, \cdot\, )^\sharp,
\]
where $\sharp:\nu^{*0,1}_M(N)\to \nu^{1,0}_M(N)$ denotes the musical isomorphism. Finally, since $\Omega$ is parallel and we are using the Levi-Civita connection, we have that
\[
\overline{\Omega}(e_i,\nabla_{e_i}^\perp v_2,\,\cdot\, ,\, \cdot\, )^\sharp=-\sum_{i=1}^4e_i\hook \nabla_{e_i}(v_2\hook \overline{\Omega})^\sharp=:\bar\partial^*(v_2\hook\overline{\Omega})^\sharp,
\]
and so we are done.
\end{proof}

\subsection{The moduli space of Cayley deformations}\label{ss:modspcaydef}

We will now prove that we can extend the operator \eqref{eqn:caypde} to a smooth map of Banach spaces. The argument we use to prove Lemma \ref{lem:cayreg} is reasonably standard, and is based on the arguments in \cite[Prop 2.10]{MR2054572} and \cite[Prop 6.9]{MR2403190}.

\begin{lem}\label{lem:cayreg}
 Let $(M,J,\omega,\Omega)$ be a four-dimensional Calabi--Yau manifold and let $N$ be a two-dimensional compact complex submanifold of $M$. Let $F$ be the partial differential operator defined in Equation \eqref{eqn:caypde}. Then we can extend $F$ to a smooth map of Banach spaces
\begin{equation}\label{eqn:lpcaypde}
 F:L^p_{k+1}(U)\to L^p_{k}(\Lambda^{0,1}N\otimes \nu^{1,0}_M(N)),
\end{equation}
for any $1<p<\infty$ and $k\in\mathbb{N}$ satisfying $k>1+4/p$. Moreover, the normal vector fields in the kernel of \eqref{eqn:lpcaypde} are smooth.
\end{lem}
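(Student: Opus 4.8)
The plan is to treat $F$ as a nonlinear first-order differential operator between sections of vector bundles over the compact Riemannian four-manifold $N$, and to run the standard two-part argument: extend $F$ to a smooth map of Sobolev completions using the Banach algebra property of $L^p_k$, and then deduce smoothness of elements of the kernel by elliptic bootstrapping, using that the linearisation of $F$ at $0$ is the elliptic operator $\infop$ of Proposition \ref{prop:caylin}. To begin I would record the local form of $F$: all the ingredients of \eqref{eqn:caypde} --- the form $\tau$, the metric and its Levi-Civita connection, the exponential map of Theorem \ref{thm:cpttubnbhd}, the Hodge star $*_N$, the projection $\pi$ onto $E$, and the bundle isomorphisms of Propositions \ref{prop:normal} and \ref{prop:e} --- are smooth, and the only way the derivatives of a normal vector field enter is through the tangent planes of the graph $N_{\tilde{v}}$, on which $\tau$ depends polynomially (of degree at most four, as exhibited in the proof of Proposition \ref{prop:caypde}). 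Hence in local trivialisations $F(v)(x) = P\big(x, v(x), \nabla v(x)\big)$ for a map $P$ that is smooth in all its arguments, defined for $(v,\nabla v)$ near the zero section. Throughout I fix $1<p<\infty$ and $k \in \mathbb{N}$ with $k > 1+4/p$; since $\dim N = 4$ this gives the Sobolev embeddings $L^p_{k+1} \hookrightarrow C^1$ and $L^p_k \hookrightarrow C^0$, and makes $L^p_k$ a Banach algebra closed under composition with fibrewise-smooth maps.

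For the extension, note that if $v \in L^p_{k+1}$ lies in $U$ then by Sobolev embedding $v$ is $C^1$ and $C^1$-small, so $\tilde{v}$ takes values in the open set $V$ of Theorem \ref{thm:cpttubnbhd} and the pointwise formula for $F(v)$ makes sense. The map $v \mapsto (v, \nabla v)$ is bounded linear $L^p_{k+1}(U) \to L^p_k \oplus L^p_k$, and post-composition with the fibrewise-smooth map induced by $P$ is a smooth map $L^p_k \oplus L^p_k \to L^p_k$ by the $\omega$-lemma (composition with smooth fibre maps is smooth on Sobolev spaces above the embedding threshold; cf. \cite[Prop 2.10]{MR2054572} and \cite[Prop 6.9]{MR2403190}). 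Composing these, $F$ extends to a smooth map \eqref{eqn:lpcaypde}.

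For the regularity statement, let $v \in L^p_{k+1}(U)$ satisfy $F(v) = 0$. Since $N$ is Cayley, $\tau|_N \equiv 0$ and hence $F(0) = 0$, so by the fundamental theorem of calculus
\[
0 = F(v) - F(0) = \Big(\int_0^1 dF|_{tv}\,dt\Big)(v) =: L_v v,
\]
where $L_v$ is a linear first-order operator whose coefficients are fibrewise-smooth functions of the $1$-jet of $v$, hence lie in $L^p_k$. Its principal symbol is the average of those of $dF|_{tv}$; since $dF|_0 = \infop$ is elliptic and ellipticity is an open condition, $L_v$ is elliptic once $v$ is $C^1$-small, which holds after shrinking $U$. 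I would then bootstrap: $L_v v = 0$ with $v \in L^p_{k+1}$ and coefficients in $L^p_k$ gives $v \in L^p_{k+2}$ by linear elliptic regularity, which improves the coefficients to $L^p_{k+1}$, whence $v \in L^p_{k+3}$, and iterating $v \in \bigcap_j L^p_{k+j} = C^\infty$. (Alternatively one applies the interior regularity theory for nonlinear elliptic systems directly to $P(x, v, \nabla v) = 0$.)

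I expect the main obstacle to be the regularity bootstrap with only Sobolev-class coefficients: one must check that each linear elliptic estimate for $L_v$ genuinely gains a full derivative and that the product and composition inequalities used along the way are valid in the given range, which is exactly what the hypothesis $k > 1 + 4/p$ is arranged to secure. The problem-specific ingredient --- that $F(0) = 0$, which lets one rewrite the kernel equation as the genuinely elliptic linear equation $L_v v = 0$ --- is immediate from $N$ being Cayley, so everything else is routine.
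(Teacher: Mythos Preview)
Your proposal is correct and reaches the same conclusion, but the execution differs from the paper's in both halves of the argument.

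For the extension, the paper writes $F(v)=(\infop)v+Q(x,v,\nabla v)$, expands $Q$ in a Taylor series to obtain the pointwise quadratic bound $|Q(x,y,z)|\le C(|y|+|z|)^2$, and from this derives by hand the Sobolev estimate $\|Q(v)\|_{p,k}\le \tilde C_k\|v\|_{p,k+1}^2$ (using $L^p_{k+1}\hookrightarrow C^1$ when $k>4/p$); smoothness of $F$ is then checked directly from this estimate and its higher analogues. You instead invoke the $\omega$-lemma after factoring $F$ through the $1$-jet map. This is cleaner and more portable, and is exactly what the references you cite package; the paper's explicit estimate buys a concrete quadratic bound on the remainder, which can be useful downstream (e.g.\ for quantitative implicit-function-theorem arguments) but is not needed here.

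For the kernel regularity, the paper simply embeds $L^p_{k+1}\hookrightarrow C^2$ (this is where $k>1+4/p$ enters) and cites a nonlinear elliptic regularity theorem. Your route via $L_v v=0$ with $L_v=\int_0^1 dF|_{tv}\,dt$ is also valid, but be careful with the sentence ``$L_v v=0$ with $v\in L^p_{k+1}$ and coefficients in $L^p_k$ gives $v\in L^p_{k+2}$'': as stated this does not gain a derivative, since the elliptic estimate for $L_v$ with $L^p_k$ coefficients only controls $\|v\|_{p,k+1}$. The bootstrap must go through difference quotients (or, equivalently, by differentiating $P(x,v,\nabla v)=0$ and applying the level-$k$ elliptic estimate to $\partial_j v$), after which the coefficient regularity improves and the iteration closes. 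You flag exactly this as the main obstacle and note the alternative of citing nonlinear regularity directly, so the gap is one of presentation rather than substance.
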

\begin{proof}
At each point $y$ of $N$ we have that $F(v)(y)$ relates to the tangent space of the deformation $N_{\tilde{v}}:=\exp_{\tilde{v}}(N)$ and therefore depends on $v$ and $\nabla v$. We may write
\begin{equation}\label{eqn:defineQ}
 F(v)(x)=(\infop)v(x)+Q(x,v(x),\nabla v(x)),
\end{equation}
by Proposition \ref{prop:caylin}, and use Equation \eqref{eqn:defineQ} to define $Q$ to be a map
\[
 \{(x,y,z)\, |\, (x,y)\in U, z\in T_x^*N\otimes \nu_{x}(N)\}\to \Lambda^{0,1}N\otimes \nu^{1,0}_M(N),
\]
so that $Q(v)(x):=Q(x,v(x),\nabla v(x))$ is a section of $\Lambda^{0,1}N\otimes \nu^{1,0}_M(N)$. By definition of $F$, $Q$ is smooth in $x,y$ and $z$.
Since we can think of $Q$ as a map $\nu_x(N)\otimes T^*_xN\otimes \nu_x(N)\to [\Lambda^{0,1}N\otimes \nu^{1,0}_M(N)]_x$, we can make sense of a Taylor expansion of $Q(x,y,z)$ around $(x,0,0)$. Since by definition $Q$ has no linear part at zero we deduce that
\[
 |Q(x,y,z)| \le C_x(|y|+|z|)^2,
\]
for each $x\in N$. Since $N$ is compact, we may deduce that
\[
 \|Q(v)(x)\|_{C^0} \le C\|v\|_{C^1}^2,
\]
where $C$ is independent of $x$.
From this we see that
\begin{align*}
 \left(\int_N |Q(v)(x)|^p \text{ vol}_N\right)^{1/p} &\le C\left(\int_N(|v|+|\nabla v|)^{2p}\text{ vol}_N\right)^{1/p} \\
&\le C \|v\|_{C^1}\left(\int_N(|v|+|\nabla v|)^p\text{ vol}_N\right)^{1/p} \\
&\le C \|v\|_{C^1}\left(\int_N |v|^p\text{ vol}_N+\int_N |\nabla v|^p \text{ vol}_N\right)^{1/p},
\end{align*}
by Minkowski's inequality. So we have $Q$ maps $L^p_1(\nu_M(N))\cap C^1(\nu_M(N))\to L^p(\Lambda^{0,1}N\otimes \nu^{1,0}_M(N))$. We can take the derivative of the Taylor expansion of $Q$, and apply the chain rule to estimate $|\nabla Q|$ by a polynomial in $|v|, |\nabla v|$ and $|\nabla^2 v|$. A similar argument to the $k=0$ case given above shows that for each $k\in\mathbb{N}$ there exists $C_k>0$ so that 
\begin{equation}\label{eqn:qestderiv}
 \|Q(v)\|_{p,k}\le C_k\|v\|_{C^1}\|v\|_{p,k+1}.
\end{equation}
In particular, when $k>4/p$, $L^p_{k+1}(\nu_M(N))$ is continuously embedded in $C^1(\nu_M(N))$ by \cite[Thm 2.10]{MR1636569}, and so for $k>4/p$ there exist $\tilde{C}_k>0$ so that
\begin{equation}\label{eqn:qestderiv2}
 \|Q(v)\|_{p,k}\le \tilde{C}_k\|v\|_{p,k+1}^2.
\end{equation}
Since $\infop$ is linear, we see that $F$ takes $L^p_{k+1}(\nu_M(N))$ into $L^p_k(\Lambda^{0,1}N\otimes \nu^{1,0}_M(N))$.

Now we must show that \eqref{eqn:lpcaypde} is a smooth map of Banach spaces. Firstly, since 
\[
 v\mapsto (\infop)v,
\]
is linear, it is clearly smooth as a map
\[
 L^p_{k+1}(U)\to L^p_k(\Lambda^{0,1}N\otimes \nu^{1,0}_M(N)).
\]
To see that
\[
 v \mapsto (x\mapsto Q(x,v(x),\nabla v(x))),
\]
is a smooth map
\[
 L^p_{k+1}(U)\to L^p_k(\Lambda^{0,1}N\otimes \nu^{1,0}_M(N)),
\]
we proceed as follows. To see that $F$ is once differentiable at zero in this sense, notice that
\[
 \frac{\|F(v)-F(0)-(\infop)v\|_{p,k}}{\|v\|_{p,k+1}}=\frac{\|Q(v)\|_{p,k}}{\|v\|_{p,k+1}}\to 0,
\]
as $\|v\|_{p,k+1}\to 0$ by the estimate \eqref{eqn:qestderiv2}. Repeating this argument for the derivatives of $Q$, we can show that we can differentiate $Q$ as many times as we like. We deduce that \eqref{eqn:lpcaypde} is a smooth map of Banach spaces.

Finally, regularity of the kernel of \eqref{eqn:lpcaypde} follows from a nonlinear elliptic regularity result, such as \cite[Thm 3.56]{MR1636569}, which we may apply since $k>1+4/p$ (which allows us to embed $L^p_{k+1}(U)$ in $C^2(U)$ by Sobolev embedding \cite[Thm 2.10]{MR1636569}).
\end{proof}

We will now deduce the main result of this section. For the reader's convenience, we will present the Banach space implicit function theorem here in the form that we will need it. See, for example, \cite[Ch 6 Thm 2.1]{MR783635} for a proof.
 
\begin{thm}[Implicit function theorem]\label{thm:imp}
 Let $X$ and $Y$ be Banach spaces and let $U\subseteq X$ be an open neighbourhood of zero. Let $\mathcal{F}:U\to Y$ be a $C^k$-map, with $k\ge 1$, such that $\mathcal{F}(0)=0$. Suppose further that $d\mathcal{F}\vert_0:X\to Y$ is surjective, with kernel $K$ such that $X=K\oplus X'$ for some closed subspace $X'$ of $X$.

Then there exist open sets $K_0\subseteq K$, $X'_0\subseteq X'$ both containing zero and a $C^k$-map $g:K_0\to X'_0$ such that $g(0)=0$ and
\[
 \mathcal{F}^{-1}(0)\cap( K_0\times X'_0)=\{(x,g(x))\,|\, x\in K_0\}.
\]
\end{thm}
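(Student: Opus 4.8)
The plan is to deduce this from the Banach-space inverse function theorem, to which the implicit function theorem is essentially equivalent. Write elements of $X=K\oplus X'$ as pairs $(a,b)$ with $a\in K$ and $b\in X'$ (and let me write $a$ for the $K$-variable playing the role of ``$x$'' in the statement); since $X'$ is a closed subspace of the Banach space $X$, it is itself a Banach space. The first point to record is that $L:=d\mathcal{F}\vert_0\vert_{X'}:X'\to Y$ is a topological isomorphism: it is surjective because $d\mathcal{F}\vert_0$ is surjective and $K=\text{Ker }d\mathcal{F}\vert_0$, injective because $X'\cap K=\{0\}$, and $L^{-1}$ is bounded by the open mapping theorem.

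Next I would introduce the auxiliary map $\Phi:U\to K\times Y$, $\Phi(a,b)=(a,\mathcal{F}(a,b))$. It is $C^k$, satisfies $\Phi(0)=0$, and its derivative at $0$ is $(a,b)\mapsto(a,Lb)$, which is an isomorphism $K\times X'\to K\times Y$. By the inverse function theorem, $\Phi$ restricts to a $C^k$-diffeomorphism of a neighbourhood of $0$ onto a neighbourhood of $0$ in $K\times Y$; because the first component of $\Phi$ is the projection onto $K$, its inverse has the form $\Psi(a,y)=(a,\beta(a,y))$ with $\beta$ a $C^k$-map into $X'$ and $\beta(0,0)=0$. Setting $g(a):=\beta(a,0)$ gives a $C^k$-map with $g(0)=0$, and $\Phi(a,g(a))=(a,0)$ translates into $\mathcal{F}(a,g(a))=0$; conversely $\mathcal{F}(a,b)=0$ forces $\Phi(a,b)=(a,0)$, hence $(a,b)=\Psi(a,0)=(a,g(a))$. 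Finally one shrinks to genuine product neighbourhoods $K_0\subseteq K$ and $X'_0\subseteq X'$ sitting inside the domain of the diffeomorphism — this bookkeeping is the only mildly fiddly part — to conclude $\mathcal{F}^{-1}(0)\cap(K_0\times X'_0)=\{(a,g(a))\,\vert\,a\in K_0\}$.

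If instead a self-contained argument is wanted, one can bypass the inverse function theorem: for fixed $a$ near $0$, solve $\mathcal{F}(a,b)=0$ for $b\in X'$ by applying the Banach fixed point theorem to the contraction $b\mapsto b-L^{-1}\mathcal{F}(a,b)$ on a small ball, obtaining a unique solution $b=g(a)$. The main obstacle in either approach is the same: showing that $g$ is not merely continuous but $C^k$. For the fixed-point route this is done by first proving $g$ is Lipschitz, then that it is differentiable with $dg(a)=-[\,\partial_b\mathcal{F}(a,g(a))\,]^{-1}\partial_a\mathcal{F}(a,g(a))$, and then inducting on $k$ using that this formula exhibits $dg$ as a composite of $C^{k-1}$ maps; for the $\Phi$-route it is packaged into the regularity clause of the inverse function theorem. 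Either way, the analytic heart of the matter is the $C^k$-dependence of the implicitly defined solution on the parameter, which is exactly why the result is quoted here rather than reproved, the application in the next section needing only its statement.
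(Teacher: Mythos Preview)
The paper does not actually prove this theorem: it is quoted as a standard tool, with a pointer to the literature (``See, for example, \cite[Ch 6 Thm 2.1]{MR783635} for a proof''), and then applied in the proof of Theorem~\ref{thm:compactcay}. Your sketch is correct and is exactly the standard reduction to the Banach-space inverse function theorem that one finds in the cited reference; nothing more is needed here.
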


\begin{thm}\label{thm:compactcay}
Let $(M,J,\omega,\Omega)$ be a four-dimensional Calabi--Yau manifold and let $N$ be a two-dimensional compact complex submanifold of $M$. Then there exist a smooth manifold $K_0$, which is an open neighbourhood of $0$ in $\textnormal{Ker }(\infop)$, and a smooth map $g_2:K_0 \to \textnormal{Ker }(\infop)^*$ with $g(0)=0$ so that an open neighbourhood of $N$ in the moduli space of Cayley deformations of $N$ in $M$ is homeomorphic to an open neighbourhood of $0$ in $\textnormal{Ker }g_2$.

Moreover, the expected dimension of the moduli space of Cayley deformations of $N$ in $M$ is given by
\[
 \textnormal{ind } (\infop):= \textnormal{dim }\textnormal{Ker }(\infop)-\textnormal{dim }\textnormal{Ker } (\infop)^*,
\]
where
\begin{align*}
 (\infop)^*:C^\infty(\Lambda^{0,1}N\otimes \nu^{1,0}_M(N))\to C^\infty(\nu^{1,0}_M(N)\oplus \Lambda^{0,2}N\otimes \nu^{1,0}_M(N)),
\end{align*}
is the formal adjoint of $\infop$. If $\textnormal{Ker } (\infop)^*=\{0\}$ then the moduli space of Cayley deformations of $N$ in $M$ is a smooth manifold near $N$ of dimension
\[
\textnormal{dim Ker }(\infop).
\]
\end{thm}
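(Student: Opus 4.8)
The plan is to run the standard Kuranishi-type argument. By Proposition \ref{prop:caypde} an open neighbourhood of $N$ in the moduli space of Cayley deformations is locally homeomorphic to the kernel of the partial differential operator $F$ of \eqref{eqn:caypde}; by Lemma \ref{lem:cayreg} this $F$ extends, for suitable $p$ and $k$, to a smooth map of Banach spaces
\[
F:L^p_{k+1}(U)\to L^p_k(\Lambda^{0,1}N\otimes \nu^{1,0}_M(N)),
\]
with $F(0)=0$, whose kernel consists of smooth sections; and by Proposition \ref{prop:caylin} its linearisation at zero is $dF|_0=\infop$, which is elliptic. So the problem reduces to describing the zero set of $F$ near $0$ in terms of the elliptic operator $\infop$.

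First I would use that $N$ is compact, so that $\infop$ is Fredholm. Elliptic regularity (as already invoked in Lemma \ref{lem:cayreg}) identifies $\textnormal{Ker}(dF|_0)$ with the finite-dimensional space $\textnormal{Ker}(\infop)$ of smooth sections, and Hodge-theoretic elliptic theory identifies a complement of $\textnormal{Im}(dF|_0)$ in $L^p_k$ with the finite-dimensional space $\textnormal{Ker }(\infop)^*$ of smooth solutions of the (elliptic) formal adjoint equation. Hence one can write, with all summands closed,
\[
L^p_{k+1}(U)=\textnormal{Ker}(\infop)\oplus X',\qquad L^p_k(\Lambda^{0,1}N\otimes \nu^{1,0}_M(N))=\textnormal{Im}(dF|_0)\oplus \textnormal{Ker }(\infop)^*.
\]

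Next, let $\pi$ denote the projection onto $\textnormal{Im}(dF|_0)$ along $\textnormal{Ker }(\infop)^*$. Then $\pi\circ F$ is smooth, vanishes at $0$, and has surjective differential $\pi\circ\infop$ at $0$ with kernel $\textnormal{Ker}(\infop)$, so Theorem \ref{thm:imp} produces open neighbourhoods $K_0\ni 0$ in $\textnormal{Ker}(\infop)$ and $X'_0\ni 0$ in $X'$ and a smooth map $g_1:K_0\to X'_0$ with $g_1(0)=0$ whose graph equals $(\pi\circ F)^{-1}(0)\cap(K_0\oplus X'_0)$. Setting $g_2:K_0\to\textnormal{Ker }(\infop)^*$, $g_2(v):=(\mathrm{id}-\pi)F(v+g_1(v))$, one checks that $F^{-1}(0)$ near $0$ is precisely $\{v+g_1(v)\mid v\in K_0,\ g_2(v)=0\}$, so the map $v\mapsto v+g_1(v)$ is a homeomorphism from $\textnormal{Ker}(g_2)$ onto $F^{-1}(0)$ near $0$; composing with the local homeomorphism of Proposition \ref{prop:caypde} gives the claimed local model for $\mathcal{M}_\text{Cay}(N)$. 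Since $\textnormal{Ker}(g_2)$ is cut out of the $(\textnormal{dim Ker }\infop)$-dimensional manifold $K_0$ by a map into the $(\textnormal{dim Ker }(\infop)^*)$-dimensional space, its expected dimension is $\textnormal{dim Ker }(\infop)-\textnormal{dim Ker }(\infop)^*=\textnormal{ind}(\infop)$. Finally, if $\textnormal{Ker }(\infop)^*=\{0\}$ then $dF|_0$ is already surjective, so Theorem \ref{thm:imp} applies to $F$ itself: $F^{-1}(0)$ near $0$ is the graph of a smooth map on an open subset of $\textnormal{Ker}(\infop)$, hence a smooth manifold of dimension $\textnormal{dim Ker }(\infop)$, and transporting this through Proposition \ref{prop:caypde} finishes the proof.

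The step I expect to require the most care is the bookkeeping around the cokernel: identifying the Banach-space complement of $\textnormal{Im}(dF|_0)$ with the space $\textnormal{Ker }(\infop)^*$ of smooth sections (which needs ellipticity of the formal adjoint together with a further regularity argument), and then pushing the Banach-space homeomorphism forward to the actual moduli space through Proposition \ref{prop:caypde}. This is also precisely the point at which one sees why the dimension is only \emph{expected}: $g_2$ need not vanish identically, so the obstruction space $\textnormal{Ker }(\infop)^*$ may genuinely cut down $K_0$.
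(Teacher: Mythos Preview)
Your proposal is correct and follows the same Kuranishi-type strategy as the paper: identify the moduli space with $F^{-1}(0)$ via Proposition~\ref{prop:caypde}, extend to Banach spaces via Lemma~\ref{lem:cayreg}, and use Fredholmness of the elliptic linearisation $\infop$ to apply Theorem~\ref{thm:imp}. The only difference is a minor technical variant in how surjectivity is arranged for the implicit function theorem: the paper enlarges the domain, defining $\mathcal{F}(v,w)=F(v)+w$ on $L^p_{k+1}(U)\times\mathcal{O}$ with $\mathcal{O}\cong\textnormal{Ker}(\infop)^*$, and then reads off $g_1,g_2$ from the single application of Theorem~\ref{thm:imp} to $\mathcal{F}$; you instead shrink the target by composing with the projection $\pi$ onto $\textnormal{Im}(dF|_0)$, apply Theorem~\ref{thm:imp} to $\pi\circ F$ to obtain $g_1$, and then define $g_2$ by hand as the obstruction component $(\mathrm{id}-\pi)F(v+g_1(v))$. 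These two devices are standard and equivalent, yielding the same obstruction map $g_2$ and the same local model.
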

\begin{proof}
 By Proposition \ref{prop:caypde} we know that the moduli space of Cayley deformations of $N$ in $M$ is locally homeomorphic to the kernel of $F$ given in \eqref{eqn:caypde}.
By Lemma \ref{lem:cayreg}, without changing the kernel, $F$ extends to a smooth map
\[
 L^p_{k+1}(U)\to L^p_{k}(\Lambda^{0,1}N\otimes \nu^{1,0}_M(N)),
\]
for any $1<p<\infty$ and $k\in \mathbb{N}$ and the linearisation of $F$ at zero is the elliptic operator $\infop$, which extends by density to a smooth map
\begin{equation}\label{eqn:dmainthm}
\infop:L^p_{k+1}(\nu^{1,0}_M(N)\oplus \Lambda^{0,2}N\otimes \nu^{1,0}_M(N))\to L^p_{k}(\Lambda^{0,1}N\otimes \nu^{1,0}_M(N)).
\end{equation}
Since $N$ is compact and \eqref{eqn:dmainthm} is elliptic, the map \eqref{eqn:dmainthm} is Fredholm, and therefore \eqref{eqn:dmainthm} has finite-dimensional kernel and cokernel, and closed image. As a consequence, we can write
\[
L^p_{k+1}(\nu^{1,0}_M(N)\oplus \Lambda^{0,2}N\otimes \nu^{1,0}_M(N))=K'\oplus X', 
\]
where $K'$ is the kernel of $\infop$ and $X'$ is closed, and 
\[
L^p_{k}(\Lambda^{0,1}N\otimes \nu^{1,0}_M(N))=(\infop)L^p_{k+1}(\nu^{1,0}_M(N)\oplus \Lambda^{0,2}N\otimes \nu^{1,0}_M(N))\oplus \mathcal{O},
\]
where $\mathcal{O}$ is a finite-dimensional space that we'll call the \emph{obstruction space}, and
\begin{align*}
 \mathcal{O}&\cong L^p_{k}(\Lambda^{0,1}N\otimes \nu^{1,0}_M(N))/(\infop)L^p_{k+1}(\nu^{1,0}_M(N)\oplus \Lambda^{0,2}N\otimes \nu^{1,0}_M(N)) \\
 &=: \text{Coker } (\infop).
\end{align*}
Notice that if the obstruction space vanishes, i.e., $\mathcal{O}=\{0\}$, then it follows immediately from the implicit function theorem \ref{thm:imp} that the moduli space of Cayley deformations of $N$ is a smooth manifold near $N$ of dimension $\text{dim Ker }(\infop)$. However, the obstruction space is nontrivial in general, and so $\infop$ is not surjective, thus we are not able to apply the implicit function theorem \ref{thm:imp} to $F$. Instead define
\begin{align*}
 \mathcal{F}:L^p_{k+1}(U)\times \mathcal{O}&\to L^p_{k}(\Lambda^{0,1}N\otimes \nu^{1,0}_M(N)), \\
(v,w)&\mapsto F(v)+w. 
\end{align*}
We see that
\[
 d\mathcal{F}|_{(0,0)}(v,w)=(\infop)v+w,
\]
which surjects, and therefore we may apply the implicit function theorem \ref{thm:imp} to $\mathcal{F}$. Denoting the kernel of $d\mathcal{F}|_{(0,0)}$ by $K=K'\times\{0\}$, we can write
\[
 L^p_{k+1}(U)\times \mathcal{O}=K\oplus (X'\times \mathcal{O}).
\]
The implicit function theorem \ref{thm:imp} gives us open sets $K_0\subseteq K$, $X'_0\subseteq X'$ and $\mathcal{O}_0\subseteq\mathcal{O}$ and a smooth map $g=(g_1,g_2):K_0\to X'_0\times\mathcal{O}_0$ such that
\[
 \mathcal{F}^{-1}(0)\cap (K_0\times X'_0\times \mathcal{O}_0)=\{(x,g_1(x),g_2(x))\,|\,x\in K_0\}.
\]
Then for $x\in K_0$ we have that
\[
 \mathcal{F}(x,g_1(x),g_2(x))=F(x,g_1(x))+g_2(x)=0.
\]
Therefore we can identify the kernel of $F$ with the kernel of the map $g_2:K_0\to \mathcal{O}_0$. These spaces are finite-dimensional since $\infop$ is Fredholm. By Sard's theorem, we may deduce that the expected dimension of the kernel of $g_2$ is equal to the difference of the dimensions of $K_0$ and $\mathcal{O}_0$, and therefore the expected dimension of the moduli space of Cayley deformations of $N$ in $M$ is
\[
 \text{dim Ker }(\infop)-\text{dim Coker }(\infop),
\]
where $\infop$ is considered as a map $L^p_{k+1}(U)\to L^p_k(\Lambda^{0,1}N\otimes \nu^{1,0}_M(N))$. We have that the cokernel $\infop$ is isomorphic to the kernel of the adjoint to $\infop$, $(\infop)^*$, since $N$ is compact. Elliptic regularity tells us that the kernels of $\infop$ and $(\infop)^*$ acting on $L^p_{k+1}(\nu^{1,0}_M(N)\oplus \Lambda^{0,2}N\otimes \nu^{1,0}_M(N))$ and $(L^p_{k}(\Lambda^{0,1}N\otimes \nu^{1,0}_M(N)))^*$ for any $1<p<\infty$ and $k\in\mathbb{N}$ are exactly equal to the kernels of $\infop$ and $(\infop)^*$ acting on $C^\infty(\nu^{1,0}_M(N)\oplus \Lambda^{0,2}N\otimes \nu^{1,0}_M(N))$ and $C^\infty(\Lambda^{0,1}N\otimes \nu^{1,0}_M(N))$ respectively.
\end{proof}

\subsection{Index theory}\label{sec:ind}

We will now compute the expected dimension of the moduli space from Theorem \ref{thm:compactcay} in terms of topological invariants of the manifold.

\begin{thm}\label{thm:index}
 Let $N$ be a two-dimensional compact complex submanifold of a four-dimensional Calabi--Yau manifold $M$. Consider the operator
 \[
 \infop:C^\infty(\nu_M^{1,0}(N)\oplus \Lambda^{0,2}M\otimes \nu^{1,0}_M(N))\to C^\infty(\Lambda^{0,1}N\otimes \nu^{1,0}_M(N)).
\]
Then the index of this operator is given by
\begin{equation}\label{eqn:indformula}
 \textnormal{ind } \infop=\frac{1}{2}\textnormal{sign}(N)+\frac{1}{2}\chi(N)-[N]\cdot[N],
\end{equation}
where $\textnormal{sign}(N)$ is the signature of $N$, $\chi(N)$ is the Euler characteristic of $N$ and $[N]\cdot[N]$ is the self-intersection number of $N$.
\end{thm}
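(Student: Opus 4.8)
The plan is to identify $\infop$ with the rolled-up Dolbeault operator of the holomorphic normal bundle $E:=\nu^{1,0}_M(N)$ on the complex surface $N$, to compute its index as the holomorphic Euler characteristic $\chi(N,E)$, and then to evaluate $\chi(N,E)$ by Hirzebruch--Riemann--Roch, using the Calabi--Yau condition to rewrite the resulting Chern numbers as $\tfrac12\mathrm{sign}(N)+\tfrac12\chi(N)-[N]\cdot[N]$.

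First I would observe that the domain of $\infop$ is $C^\infty((\Lambda^{0,0}N\oplus\Lambda^{0,2}N)\otimes E)=\Omega^{0,\mathrm{even}}(N,E)$ and the codomain is $\Omega^{0,1}(N,E)=\Omega^{0,\mathrm{odd}}(N,E)$ --- there is no $\Omega^{0,3}$ since $\dim_\mathbb{C}N=2$ --- and that the operator is precisely $\bar\partial+\bar\partial^*$ for the Dolbeault complex of $E$ with respect to the metric on $N$ and a Hermitian metric on $E$. A standard Hodge-theoretic argument shows $\|\infop(\alpha_0+\alpha_2)\|^2=\|\bar\partial\alpha_0\|^2+\|\bar\partial^*\alpha_2\|^2$, so the kernel splits orthogonally into harmonic $(0,0)$- and $(0,2)$-forms valued in $E$, while the cokernel is the space of harmonic $(0,1)$-forms valued in $E$; hence
\[
 \mathrm{ind}\,\infop=h^0(N,E)-h^1(N,E)+h^2(N,E)=\chi(N,E).
\]
Alternatively one may apply the Atiyah--Singer index theorem to $\infop$ directly and obtain the same integrand appearing below; I would record this as a remark.

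Next I would apply Hirzebruch--Riemann--Roch to the rank-two holomorphic bundle $E$ on the surface $N$, extracting the degree-four part of $\mathrm{ch}(E)\,\mathrm{Td}(TN)$:
\[
 \chi(N,E)=\int_N\Bigl[\tfrac12 c_1(E)^2-c_2(E)+\tfrac12 c_1(E)c_1(N)+\tfrac16\bigl(c_1(N)^2+c_2(N)\bigr)\Bigr].
\]
To evaluate the right-hand side I would use two inputs: from the normal bundle sequence $0\to TN\to TM|_N\to E\to 0$ together with $c_1(TM)=0$ we get $c_1(E)=-c_1(N)$; and the integral of the top Chern class of the normal bundle is the self-intersection number, $\int_N c_2(E)=[N]\cdot[N]$. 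Substituting $c_1(E)=-c_1(N)$ collapses the $c_1$-terms to $\tfrac16\int_N c_1(N)^2$, giving $\chi(N,E)=\tfrac16\int_N c_1(N)^2+\tfrac16\int_N c_2(N)-[N]\cdot[N]$. Finally, the Hirzebruch signature theorem $\mathrm{sign}(N)=\tfrac13\int_N(c_1(N)^2-2c_2(N))$ and Gauss--Bonnet $\chi(N)=\int_N c_2(N)$ give $\int_N c_1(N)^2=3\,\mathrm{sign}(N)+2\chi(N)$; substituting yields $\tfrac12\mathrm{sign}(N)+\tfrac13\chi(N)+\tfrac16\chi(N)-[N]\cdot[N]=\tfrac12\mathrm{sign}(N)+\tfrac12\chi(N)-[N]\cdot[N]$, which is exactly \eqref{eqn:indformula}.

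The Hodge-theoretic identification of the index and the Chern-class bookkeeping are both routine; the step that needs the most care is keeping the sign and orientation conventions consistent and justifying $\int_N c_2(E)=[N]\cdot[N]$ --- that the Euler class of the holomorphic normal bundle represents the self-intersection class of $N$ in $M$ --- while confirming that the Calabi--Yau hypothesis enters the computation only through $c_1(TM|_N)=0$, everything else being intrinsic to the pair $(N,\nu^{1,0}_M(N))$.
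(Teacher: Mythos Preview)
Your proposal is correct and follows essentially the same route as the paper: identify the index with the holomorphic Euler characteristic $\chi(N,\nu^{1,0}_M(N))$ via Hodge theory, apply Hirzebruch--Riemann--Roch, use $c_1(TM)=0$ to eliminate $c_1(E)$, and invoke the signature theorem and $\int_N c_2(\nu^{1,0}_M(N))=[N]\cdot[N]$. The only cosmetic difference is that the paper passes through the Pontryagin class $p_1(N)=c_1(N)^2-2c_2(N)$ before applying the signature theorem, whereas you substitute the signature formula in Chern-class form directly.
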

\begin{proof}
 Since $N$ is compact, we can identify the kernel of $\infop$ and the kernel of its adjoint with Dolbeault cohomology groups. That is,
\begin{align*}
 \text{dim}_\mathbb{C}\text{Ker }(\infop)&=\text{dim}_\mathbb{C} H^{0,0}_{\bar\partial}(N,\nu^{1,0}_M(N))+\text{dim}_\mathbb{C}H^{0,2}_{\bar\partial}(N,\nu^{1,0}_M(N)), \\
\text{dim}_\mathbb{C} \text{Ker }(\infop)^*&=\text{dim}_\mathbb{C}H^{0,1}_{\bar\partial}(N,\nu^{1,0}_M(N)).
\end{align*}
By Dolbeault's theorem, we can then identify the index of the operator with the dimensions of certain sheaf cohomology groups. We have that
\[
 \text{ind }\infop=\sum_{i=0}^2(-1)^i\text{dim}_\mathbb{C}H^i(N,\nu^{1,0}_M(N)).
\]
Then by the Hirzebruch--Riemann--Roch theorem \cite[Thm 5.1.1]{MR2093043}, we have that
\[
 \text{ind }\infop=\int_N \text{ch}(\nu^{1,0}_M(N))\text{td}(N),
\]
where ch$(\nu^{1,0}_M(N))$ is the Chern character of $\nu^{1,0}_M(N)$ and td$(N)$ is the Todd class of $N$.

We calculate that
\begin{align*}
 \int_N \text{ch}(\nu^{1,0}_M(N))\text{td}(N)&=\int_N\frac{1}{6}(c_1^2(N)+c_2(N))+\frac{1}{2}c_1(\nu^{1,0}_M(N))c_1(N) \\
&+\frac{1}{2}(c_1^2(\nu^{1,0}_M(N))-2c_2(\nu^{1,0}_M(N))).
\end{align*}
Since $M$ is a Calabi--Yau manifold, $c_1(M)=0$, and therefore
\[
 0=c_1(T^{1,0}M|_N)=c_1(T^{1,0}N\oplus \nu^{1,0}_M(N))=c_1(T^{1,0}N)+c_1(\nu^{1,0}_M(N)),
\]
which tells us that 
\begin{align*}
 \int_N \text{ch}(\nu^{1,0}_M(N))\text{td}(N)&=\frac{1}{6}(c_1^2(N)+c_2(N))-c_2(\nu^{1,0}_M(N)) \\
&=\frac{1}{6}(c_1^2(N)-2c_2(N))+\frac{1}{2}c_2(N)-c_2(\nu^{1,0}_M(N)).
\end{align*}
Finally, since $c_i(\overline{E})=(-1)^ic_i(E)$,
\begin{align*}
 c_2(TN\otimes \mathbb{C})&=c_2(T^{1,0}N\oplus T^{0,1}N)=c_2(N)+c_1(N)c_1(T^{0,1}N)+c_2(T^{0,1}N) \\
&=2c_2(N)-c_1(N)^2,
\end{align*}
and so by definition of the Pontryagin class $p_1(N)$, we see that
\[
  \int_N \text{ch}(\nu^{1,0}_M(N))\text{td}(N)=\frac{1}{6}p_1(N)+\frac{1}{2}c_2(N)-c_2(\nu^{1,0}_M(N)),
\]
and therefore applying the Hirzebruch signature theorem \cite[Cor 5.1.4]{MR2093043} we have that
\[
 \text{ind }\infop=\frac{1}{2}\text{sign}(N)+\frac{1}{2}\chi(N)-[N]\cdot[N],
\]
as required.
\end{proof}

\section{Complex deformations of compact complex submanifolds}\label{sec:compdef}

Our ultimate goal in this section is to find out when a Cayley deformation of a compact complex surface $N$ in a Calabi--Yau four-fold $M$ is a complex deformation. We will deduce this as a corollary of a result on the complex deformations of any compact complex submanifold of a Calabi--Yau manifold.

If $N'$ is a Cayley deformation of $N$, we see that
\[
 \text{vol}_{N'}=\frac{1}{2}\omega\wedge \omega|_{N'}+\text{Re }\Omega|_{N'},
\]
where $\omega$ is the Ricci-flat K\"ahler form and $\Omega$ is the holomorphic volume form of $M$. It is easy to see that $N'$ is a complex submanifold of $M$ if, and only if,
\[
 \text{Re }\Omega|_{N'}\equiv 0.
\]
It turns out that we can use the holomorphic volume form to define a form that vanishes exactly when restricted to any complex submanifold of a given dimension.

\subsection{A form that vanishes on complex submanifolds}\label{ss:comp}
In this section will will prove that there exists a differential form on a Calabi--Yau manifold that vanishes if and only if restricted to a complex submanifold of a given dimension.
We will first require a result which follows from a lemma of Harvey and Lawson \cite[II.6 Lem 6.13]{MR666108}.
\begin{lem}\label{lem:specialform}
 Let $V$ be a $2p$-dimensional oriented linear subspace of $\mathbb{C}^m$, with $2p\le m$. Then there exist a unitary basis $e_1,Je_1,\dots,e_m,Je_m$ for $\mathbb{C}^m$ and angles $0\le \theta_1\le \theta_2\le \dots \le \theta_{p-1}\le\pi/2$, $\theta_{p-1}\le \theta_p\le \pi$ such that
\begin{align*}
 V=\textnormal{span}\{e_1,Je_1\cos\theta_1+e_2\sin\theta_1,e_3,\dots , e_{2p-1},Je_{2p-1}\cos\theta_p+e_{2p}\sin\theta_p\}.
\end{align*}
If $2p>m$, then we have that
\begin{align*}
V&=\textnormal{span}\{e_1,Je_1\cos\theta_1+e_2\sin\theta_1,\dots , e_{2(n-p)-1}, \\ 
Je_{2(n-p)-1}\cos\theta_{n-p}&+e_{2(n-p)}\sin\theta_{n-p},e_{2(n-p)+1},Je_{2(n-p)+1},\dots , e_n,Je_n\}.
\end{align*}
\end{lem}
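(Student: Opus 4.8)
The plan is to read off the angles $\theta_i$ from the eigenvalues of the restriction of the K\"ahler form to $V$. Write $\omega(x,y)=\langle Jx,y\rangle$ for the K\"ahler form of $\mathbb{C}^m$ and let $\mathrm{pr}_V$ denote orthogonal projection onto $V$; then $A:=\mathrm{pr}_V\circ J|_V\colon V\to V$ is skew-adjoint for the induced inner product (as $J$ is skew-adjoint and $\mathrm{pr}_V$ self-adjoint) and $\omega|_V(x,y)=\langle Ax,y\rangle$. First I would invoke the normal form for a skew-adjoint operator on a real inner-product space: there is an orthonormal basis $f_1,\dots,f_{2p}$ of $V$ in which $\omega|_V=\sum_{i=1}^{p}\lambda_i\, f^{2i-1}\wedge f^{2i}$ with $\lambda_1\ge\dots\ge\lambda_p\ge 0$. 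Since $\lambda_i=\langle Jf_{2i-1},f_{2i}\rangle$ and $J$ is an isometry, Cauchy--Schwarz gives $\lambda_i\le 1$, so $\lambda_i=\cos\theta_i$ with $0\le\theta_1\le\dots\le\theta_p\le\pi/2$. If $f_1,\dots,f_{2p}$ is negatively oriented in $V$, replacing $f_{2p}$ by $-f_{2p}$ restores the orientation and changes $\lambda_p$ to $-\lambda_p$, which only enlarges the range of the final angle to $\theta_{p-1}\le\theta_p\le\pi$.

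The second step is to build the unitary frame of $\mathbb{C}^m$ block by block. For each $i$ with $\sin\theta_i\neq 0$ put $e_{2i-1}:=f_{2i-1}$ and $e_{2i}:=\tfrac{1}{\sin\theta_i}\bigl(f_{2i}-\cos\theta_i\, Je_{2i-1}\bigr)$; a one-line computation with $|Jf_{2i-1}|=1$ and $\langle Jf_{2i-1},f_{2i}\rangle=\cos\theta_i$ gives $|f_{2i}-\cos\theta_i Jf_{2i-1}|^2=\sin^2\theta_i$, so $e_{2i}$ is a unit vector and $f_{2i}=\cos\theta_i\, Je_{2i-1}+\sin\theta_i\, e_{2i}$ by construction. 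When $\sin\theta_i=0$ the block $\{f_{2i-1},f_{2i}\}$ is a complex line $\mathrm{span}_{\mathbb{C}}\{e_{2i-1}\}$ and the auxiliary $e_{2i}$ is unconstrained. One then checks that $\{e_1,\dots,e_{2p}\}$ is a complex-orthonormal set, i.e. $\langle e_k,e_l\rangle=\langle e_k,Je_l\rangle=0$ for $k\neq l$; this is a direct calculation, expanding the $e_{2i}$ in the $f$'s and $Jf$'s and using the block-diagonal form of $\omega|_V$ together with $\langle Jf_a,Jf_b\rangle=\langle f_a,f_b\rangle=\delta_{ab}$. Completing $\{e_1,\dots,e_{2p}\}$ arbitrarily to a unitary basis $e_1,Je_1,\dots,e_m,Je_m$ of $\mathbb{C}^m$ --- possible precisely because $2p\le m$ --- and recalling $V=\mathrm{span}\{f_1,\dots,f_{2p}\}$ gives the first displayed formula.

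For the case $2p>m$ the same construction works, once one observes that $V$ is forced to contain complex directions. A dimension count gives $\dim_{\mathbb{R}}(V\cap JV)\ge 2(2p-m)$, and $V\cap JV$ is $J$-invariant, hence a complex subspace of $\mathbb{C}^m$; in the normal form above this subspace is the span of the blocks with $\lambda_i=1$, i.e. $\theta_i=0$. So at least $2p-m$ of the blocks are complex lines, each using a single complex coordinate $\mathrm{span}_{\mathbb{C}}\{e_{2i-1}\}$, while each of the remaining $m-p$ genuinely angled blocks uses two complex coordinates; together these occupy $2(m-p)+(2p-m)=m$ complex dimensions, so the $e_1,Je_1,\dots,e_m,Je_m$ form a full unitary basis of $\mathbb{C}^m$, and listing the angled blocks first yields the second formula. (Alternatively the whole statement can be extracted from Harvey--Lawson's canonical form \cite[II.6 Lem 6.13]{MR666108} by rearranging their frame.)

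The main obstacle I anticipate is the bookkeeping in the second paragraph: verifying that the $e_{2i}$ coming from different blocks, together with the $Je_{2j-1}$, are mutually Hermitian-orthogonal, so that they really assemble into a unitary frame; and, for $2p>m$, checking that the complex-line blocks absorb exactly the excess $2p-m$ complex dimensions so that everything still fits inside $\mathbb{C}^m$. Both come down to routine but slightly fiddly computations with $\langle Jf_a,f_b\rangle$ (equal to $\lambda_i$ on a matched pair and $0$ otherwise) and $\langle Jf_a,Jf_b\rangle=\delta_{ab}$.
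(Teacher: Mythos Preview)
The paper does not give its own proof of this lemma: it simply records the statement as following from Harvey--Lawson \cite[II.6 Lem 6.13]{MR666108} and moves on. Your proposal therefore goes well beyond what the paper does, and your final parenthetical remark (``extract from Harvey--Lawson by rearranging their frame'') is in fact exactly the paper's entire argument.

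Your direct proof via the skew-adjoint normal form of $A=\mathrm{pr}_V\circ J|_V$ is the standard route and is correct. The only point worth flagging is the orthogonality check you yourself single out: the relations $\langle Jf_a,f_b\rangle=\omega(f_a,f_b)$ and $\langle Jf_a,Jf_b\rangle=\delta_{ab}$ are indeed enough, since both $f_a,f_b$ lie in $V$ and the block-diagonal form of $\omega|_V$ kills all the cross terms; no appeal to $Jf_a\in V$ is needed. With that in hand the $e_k$ are Hermitian-orthonormal and the completion to a unitary basis goes through. The $2p>m$ case is handled cleanly by your dimension count $\dim(V\cap JV)\ge 2(2p-m)$.
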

Given this result, we can prove the following proposition.
\begin{prop}\label{prop:compdefs}
 Let $X$ be an oriented real $2p$-dimensional submanifold of an $m$-dimensional Calabi--Yau manifold $M$, with $m\ne p+1$. Then $X$ is a complex submanifold of $M$ if, and only if
\[
 \sigma(v_1,\dots , v_{p+1})\equiv0,
\]
for all vector fields $v_1,\dots v_{p+1}$ on $X$, where 
\begin{equation}\label{eqn:sigma}
 \sigma(v_1,\dots , v_{p+1}):=\textnormal{Re }\Omega(v_1,\dots , v_{p+1},\,\cdot\, ,\dots, \, \cdot \,),
\end{equation}
where $\Omega$ is the holomorphic volume form of $M$.

If $m=p+1$ then we must have that
\[
\textnormal{Re }\Omega(v_1,\dots , v_{p+1})=0=\textnormal{Im }\Omega(v_1,\dots , v_{p+1}),
\]
for all vector fields $v_1,\dots v_{p+1}$ on $X$.
\end{prop}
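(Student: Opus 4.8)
The plan is to prove the two implications separately, reducing the substantive one to a pointwise linear algebra statement handled by Lemma~\ref{lem:specialform}. The forward implication is the easy one: if $X$ is a complex submanifold, then it has complex dimension $p$ and each $T_xX$ is a complex $p$-dimensional subspace of $T_xM$. Since $\Omega$ has type $(m,0)$, its value on a tuple of vectors depends only on their $(1,0)$-components; the $(1,0)$-components of $p+1$ vectors tangent to $X$ lie in the $p$-dimensional space $T^{1,0}_xX$ and hence are linearly dependent over $\mathbb{C}$, so $\Omega(v_1,\dots,v_{p+1},\,\cdot\,,\dots,\,\cdot\,)=0$ and $\sigma\equiv 0$. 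The same computation gives $\textnormal{Re }\Omega(v_1,\dots,v_{p+1})=0=\textnormal{Im }\Omega(v_1,\dots,v_{p+1})$ when $m=p+1$, which is all that is asserted in that case.

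For the reverse implication with $m\neq p+1$, I would first recall that a real submanifold of a complex manifold whose tangent spaces are all $J$-invariant is necessarily a complex submanifold, so it is enough to fix $x\in X$, identify $(T_xM,J,\Omega)$ with $(\mathbb{C}^m,J_0,dz_1\wedge\cdots\wedge dz_m)$, and show that $V:=T_xX$ is $J$-invariant provided the form $\textnormal{Re }\Omega(v_1,\dots,v_{p+1},\,\cdot\,,\dots,\,\cdot\,)$ on $T_xM$ vanishes for all $v_1,\dots,v_{p+1}\in V$. Applying Lemma~\ref{lem:specialform} gives a unitary basis $e_1,Je_1,\dots,e_m,Je_m$ and angles $\theta_k$ putting $V$ in normal form, built from a collection of rotated pairs $\{e_{2k-1},\,Je_{2k-1}\cos\theta_k+e_{2k}\sin\theta_k\}$ and, when $2p>m$, some complex lines $\{e_l,Je_l\}$. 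A short check will show that $V$ is $J$-invariant if and only if $\sin\theta_k=0$ for every rotated index $k$: if $\sin\theta_k\neq 0$ then $e_{2k-1}\in V$ but $Je_{2k-1}\notin V$, because the only basis vector with a nonzero $Je_{2k-1}$-component is $Je_{2k-1}\cos\theta_k+e_{2k}\sin\theta_k$ and its $e_{2k}$-component cannot be cancelled by the others.

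The heart of the argument will be to detect each angle with $\sigma$. For a fixed rotated index $j$ I would contract $\Omega$ with the following $p+1$ vectors of the normal form basis: $e_{2j-1}$; the vector $Je_{2j-1}\cos\theta_j+e_{2j}\sin\theta_j$; the vectors $e_{2k-1}$ for the other rotated indices $k$; and, when $2p>m$, one vector $e_l$ from each complex line. Contracting with $e_{2j-1}$ first removes the factor $dz_{2j-1}$, so that when one next contracts with $Je_{2j-1}\cos\theta_j+e_{2j}\sin\theta_j$ the $Je_{2j-1}$-part contributes nothing and only the $\sin\theta_j\,e_{2j}$-part survives, removing $dz_{2j}$ and producing an overall factor $\sin\theta_j$; each of the remaining vectors then removes one further, distinct $dz$-factor. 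The upshot is that
\[
\Omega\bigl(e_{2j-1},\,Je_{2j-1}\cos\theta_j+e_{2j}\sin\theta_j,\,\dots\bigr)=\pm\,\sin\theta_j\;dz_{i_1}\wedge\cdots\wedge dz_{i_{m-p-1}}
\]
for some distinct indices $i_1,\dots,i_{m-p-1}$, a form whose real part is a nonzero real $(m-p-1)$-form on $T_xM$. Hence the hypothesis forces $\sin\theta_j=0$, and running $j$ over all rotated indices shows $V$ is $J$-invariant, completing the proof.

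The step I expect to be the main obstacle is the bookkeeping in this last computation: one has to verify that in each case of Lemma~\ref{lem:specialform}, namely $2p\le m$ (rotated pairs only) and $2p>m$ (rotated pairs together with complex lines), the normal form basis really supplies exactly $p+1$ vectors of the required shape, and that the $p+1$ contractions remove $p+1$ distinct factors, so that the surviving wedge of $m-p-1$ factors is genuinely nonzero and therefore has nonzero real part. Everything else --- the forward implication, the reduction to a pointwise statement, and the $J$-invariance criterion --- is routine, as is the trivial remaining case $m=p$, in which $X$ is open in $M$ and the condition on $\sigma$ holds vacuously.
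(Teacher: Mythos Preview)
Your argument is correct and follows the same route as the paper: both reduce to a pointwise statement via the Harvey--Lawson normal form of Lemma~\ref{lem:specialform} and then contract $\Omega$ with the same $p+1$ adapted basis vectors to exhibit the factor $\sin\theta_j$. The one point the paper treats more carefully is that the unitary basis produced by Lemma~\ref{lem:specialform} need not carry $\Omega$ to $dz_1\wedge\cdots\wedge dz_m$ but only to $e^{i\phi}\Omega_0$ for some $\phi$, so one must check that $\textnormal{Re}\bigl(e^{i\phi}dz_{i_1}\wedge\cdots\wedge dz_{i_{m-p-1}}\bigr)\neq 0$ regardless of $\phi$; this holds exactly because $m-p-1\ge 1$, and is where the hypothesis $m\neq p+1$ actually enters.
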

\begin{proof}
If $X$ is complex, then by the adjunction formula \cite[Prop 2.2.17]{MR2093043},
\[
 K_M|_X\cong K_X\otimes \Lambda^{m-p}\nu^{*1,0}_M(X),
\]
where $K_M$ denotes the canonical bundle of $M$. Since $\Omega$ is a nowhere vanishing section of $K_M$, it is easy to see that for any $p+1$ vector fields $v_1,\dots,v_{p+1}$ on $X$
\[
 \Omega(v_1,\dots ,v_{p+1},\,\cdot\, , \dots ,\,\cdot \,)=\overline{\Omega}(v_1,\dots ,v_{p+1},\,\cdot\, , \dots ,\,\cdot \,)=0,
\]
and so,
\[
 \text{Re }\Omega(v_1,\dots ,v_{p+1},\,\cdot\, , \dots ,\,\cdot \,)=0.
\]

It remains to show that $\sigma|_X\equiv 0$ implies that $X$ is a complex manifold. We show the contrapositive, that is, if $X$ is not complex, then we can find vector fields $v_1,\dots , v_{p+1}$ on $X$ so that $\sigma(v_1,\dots , v_{p+1})\ne 0$. It suffices to show that for an arbitrary $x\in X$, we can find nonzero $v_1,\dots ,v_{p+1}\in T_xX$ so that $\sigma_x(v_1, \dots , v_{p+1})\ne 0$.

First assume that $2p\le m$. Identifying $(T_xM,\omega_x)$ with $\mathbb{C}^m$ with the standard Euclidean K\"ahler form, we can view $T_xX$ as an oriented $2p$-dimensional linear subspace $V$ of $\mathbb{C}^m$. Apply Lemma \ref{lem:specialform} to choose a unitary basis $\{e_1,Je_1,\dots, e_m,Je_m\}$ for $\mathbb{C}^m$ so that for some $0\le\theta_1\le \dots \le \theta_{p-1}\le \pi/2$, $\theta_{p-1}\le \theta_p\le \pi$,
\begin{align*}
 V=\textnormal{span}\{e_1,Je_1\cos\theta_1+e_2\sin\theta_1,e_3, \dots , e_{2p-1},Je_{2p-1}\cos\theta_p+e_{2p}\sin\theta_p\}.
\end{align*}
Since $V$ is not a complex subspace of $\mathbb{C}^m$, let $j_0\in \{1,\dots , p\}$ be so that that $0<\theta_{j_0}<\pi$, $\theta_j=0$ for $j=1,\dots , j_0-1$. The holomorphic volume form on $\mathbb{C}^m$ takes the form
\[
 \Omega_0=(e^1-iJe^1)\wedge \dots \wedge (e^m-iJe^m), 
\]
where $e^i=g(e_i,\cdot)$. Notice that $Je^i=-g(Je_i,\cdot)$. The holomorphic volume form on $T_xM$ in this choice of basis will take the form $\Omega=e^{i\phi}\Omega_0$ for some $\phi\in [0,2\pi)$. Take 
\[
v_1=e_1,v_2=e_3, \dots , v_p=e_{2p-1}, v_{p+1}=Je_{2j_0-1}\cos\theta_{j_0}+e_{2j_0}\sin\theta_{j_0}.
\]
Then we have that
\begin{align*}
 \sigma(v_1,\dots , v_{p+1})&=\sin\theta_{j_0} \text{Re }(e^{i\phi}\Omega_0)(e_1,\dots ,e_{2p-1},e_{2j_0},\,\cdot \, , \dots , \,\cdot \,),
\end{align*}
which doesn't vanish regardless of the value of $\phi$ since we assumed that $0<\theta_{j_0}<\pi$ -- as long as $p+1\ne m$, which can only happen when $p=1$, $m=2$. In this case, we find that 
\[
\sigma(e_1,Je_1\cos\theta_1+e_2\sin\theta_1)=\sin\theta_1 \cos\phi,
\]
which will vanish if $\phi=\pi/2$. But in this case,
\[
\text{Im }\Omega(e_1,Je_1\cos\theta_1+e_2\sin\theta_1)=\sin\theta_1\sin\phi\ne 0,
\]
and so regardless of the value of $\phi$, the proposition holds.

The case $2p>m$ follows from a similar argument.
\end{proof}

In the style of Proposition \ref{prop:caypde} we can now identify the moduli space of complex deformations of a compact complex submanifold in a Calabi--Yau manifold with the kernel of a partial differential operator.
\begin{prop}\label{prop:comppde}
 Let $N^p$ be a compact complex submanifold of a Calabi--Yau manifold $M^m$. Let $V$ be the open set from the tubular neighbourhood theorem \ref{thm:cpttubnbhd}, and for $v\in C^\infty(V)$ define $N_v:=\exp_v(N)$. If $p+1\ne m$, then the moduli space of complex deformations of $N$ in $M$ is locally homeomorphic to the kernel of
\begin{align}\nonumber
 G:C^\infty(V\otimes \mathbb{C})&\to C^\infty(\Lambda^{p-1}N\otimes \Lambda^{m-p-1}T^*M|_N\otimes \mathbb{C}), \\ \label{eqn:cxop}
v&\mapsto *_N\exp_v^*(\sigma|_{N_v}),
\end{align}
where $\sigma$ was defined in Proposition \ref{prop:compdefs}.
If $p+1=m$, then the moduli space of complex deformations of $N$ in $M$ is locally homeomorphic to the intersection of the kernels of 
\begin{align}\nonumber
 G_1:C^\infty(V\otimes \mathbb{C})&\to C^\infty(\Lambda^{p-1}N), \\ \label{eqn:cxop1}
v&\mapsto *_N\exp_v^*(\textnormal{Re }\Omega|_{N_v}),
\end{align}
and
\begin{align}\nonumber
 G_2:C^\infty(V\otimes \mathbb{C})&\to C^\infty(\Lambda^{p-1}N), \\ \label{eqn:cxop2}
v&\mapsto *_N\exp_v^*(\textnormal{Im }\Omega|_{N_v}),
\end{align}
\end{prop}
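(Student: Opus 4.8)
The plan is to follow the strategy of the proof of Proposition~\ref{prop:caypde}, feeding in the pointwise characterisation of complex submanifolds from Proposition~\ref{prop:compdefs} in place of the Cayley criterion of Proposition~\ref{prop:tau}. First I would invoke the tubular neighbourhood theorem~\ref{thm:cpttubnbhd}: since $N$ is compact and embedded, every deformation of $N$ lying in the tubular neighbourhood $T$ is of the form $N_v=\exp_v(N)$ for a unique small section $v\in C^\infty(V)$, the map $\exp_v\colon N\to N_v$ is a diffeomorphism, and this correspondence between nearby deformations and small normal vector fields is a homeomorphism for the relevant ($C^\infty$, or $C^1$) topologies; complexifying the normal bundle lets us view $v$ inside $C^\infty(V\otimes\mathbb{C})$, the genuine deformations forming the real locus. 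Thus an open neighbourhood of $N$ in the moduli space of complex deformations is homeomorphic to the set of small $v$ for which $N_v$ is a complex submanifold of $M$.

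Next I would translate the condition ``$N_v$ complex'' into ``$v\in\textnormal{Ker}\,G$'' in the case $p+1\ne m$. Small deformations of $N$ inherit an orientation from $N$, so Proposition~\ref{prop:compdefs} applies and says that $N_v$ is complex if and only if $\sigma|_{N_v}\equiv 0$, i.e.\ the section of $\Lambda^{p+1}T^*N_v\otimes\Lambda^{m-p-1}T^*M|_{N_v}$ obtained by restricting the form \eqref{eqn:sigma} vanishes identically. Transporting this section back to $N$ along $\exp_v$ (together with the tubular-neighbourhood identification of $T^*M$ near $N_v$ with $T^*M$ near $N$, as in the proof of Proposition~\ref{prop:caypde}) and using that $\exp_v$ is a diffeomorphism, it vanishes if and only if $\exp_v^*(\sigma|_{N_v})=0$ as a section of $\Lambda^{p+1}N\otimes\Lambda^{m-p-1}T^*M|_N$; and since the real Hodge star $*_N$ of the $2p$-dimensional manifold $N$ is a bundle isomorphism $\Lambda^{p+1}N\to\Lambda^{p-1}N$, this is in turn equivalent to $G(v)=*_N\exp_v^*(\sigma|_{N_v})=0$. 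Composing with the homeomorphism of the first step gives the asserted local homeomorphism between the moduli space and $\textnormal{Ker}\,G$ for the operator \eqref{eqn:cxop}.

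The case $p+1=m$ is handled identically, except that now $\sigma$ has no free slots and Proposition~\ref{prop:compdefs} instead requires \emph{both} $\textnormal{Re}\,\Omega(v_1,\dots,v_{p+1})$ and $\textnormal{Im}\,\Omega(v_1,\dots,v_{p+1})$ to vanish on all tangent vectors to $N_v$. Running the same argument --- restrict to $N_v$, pull back by $\exp_v$, apply $*_N$ --- with $\textnormal{Re}\,\Omega$ and $\textnormal{Im}\,\Omega$ in turn identifies ``$N_v$ complex'' with the simultaneous vanishing $G_1(v)=0=G_2(v)$, so the moduli space is then locally homeomorphic to $\textnormal{Ker}\,G_1\cap\textnormal{Ker}\,G_2$ for the operators \eqref{eqn:cxop1}--\eqref{eqn:cxop2}.

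The main point to be careful about is the first step: one must check that every embedded deformation of $N$ close enough to $N$ actually lies in $T$ (so that it is a normal graph over $N$) and is therefore $N_v$ for a \emph{unique} small $v$, that two families $\iota_t$ with the same image give the same $v$, and that the resulting bijection is a homeomorphism for the chosen topologies; one must also fix the precise meaning of the target bundle $\Lambda^{m-p-1}T^*M|_N$ and of $\exp_v^*$ on its ambient factor, which is the same transport issue that arises in Proposition~\ref{prop:caypde}. By contrast, no local coordinate computation of the kind carried out there (the projection $\pi_7$ and the algebraic identities \eqref{eqn:taunonlin}) is needed here, since Proposition~\ref{prop:compdefs} already supplies the defining condition in the clean form ``a bundle-valued form restricted to the submanifold vanishes''.
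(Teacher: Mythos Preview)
Your proposal is correct and follows the same approach as the paper: the paper's own proof is a single sentence stating that the identification follows immediately from Proposition~\ref{prop:compdefs}, and your argument simply unpacks this by making explicit the tubular-neighbourhood identification, the pullback, and the Hodge star isomorphism. Your observation that no analogue of the local computation in Proposition~\ref{prop:caypde} is needed here is exactly right, since Proposition~\ref{prop:compdefs} already gives the vanishing condition in the desired form.
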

\begin{proof}
 The identification of the moduli space with the kernel of these operators follows immediately from Proposition \ref{prop:compdefs}.
\end{proof}

\subsection{Properties of the operators \texorpdfstring{$G$, $G_1$ and $G_2$}{G}}\label{ss:compop}

We will now study the operators $G,G_1$ and $G_2$.
\begin{prop}\label{prop:glin}
 Let $N^p$ be a compact complex submanifold of a Calabi--Yau manifold $M^m$. Let $G,G_1$ and $G_2$ be the partial differential operators defined in Equations \eqref{eqn:cxop}, \eqref{eqn:cxop1} and \eqref{eqn:cxop2} respectively. Then the linearisation of these operators at zero is given by
\[
 v\mapsto i^{p+j}(-1)^{\frac{p(p-1)}{2}+1+j}(\partial^*(v\hook\Omega)+ (-1)^{p+j}\bar\partial^*(v\hook \overline{\Omega})),
\]
where $v\in C^\infty(\nu_M(N)\otimes \mathbb{C})$ and we take $j=0$ for $G$ and $G_1$ and $j=1$ for $G_2$. Therefore $v$ is an infinitesimal complex deformation of $N$ if, and only if,
\[
 \partial^*(v\hook \Omega)=0=\bar\partial^*(v\hook \overline{\Omega})
\]

Moreover, we have that, if $v=v_1\oplus v_2$ where $v_1\in \nu^{1,0}_M(N)$ and $v_2\in \nu^{0,1}_M(N)$
\[
 \partial^*(v_1\hook\Omega)=0 \iff \bar\partial v_1=0.
\]
\end{prop}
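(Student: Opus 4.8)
The plan is to prove Proposition \ref{prop:glin} in three stages: first linearise the operators $G$, $G_1$, $G_2$ at zero using the Lie derivative; second reduce the resulting expression to an exterior-derivative-type formula using that $\Omega$ is parallel (equivalently closed, since $M$ is Calabi--Yau); and third carry out the bundle-theoretic identification that turns the linearised operator into $\partial^*(v\hook\Omega)\pm\bar\partial^*(v\hook\overline\Omega)$ together with the final equivalence. The overall structure will mirror the proof of Proposition \ref{prop:caylin}: the derivative of $v\mapsto *_N\exp_v^*(\sigma|_{N_v})$ at zero equals $*_N(\mathcal{L}_v\sigma)|_N$, and one then uses Cartan's formula $\mathcal{L}_v\sigma = v\hook d\sigma + d(v\hook\sigma)$.

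Concretely, first I would observe that $\sigma = \textnormal{Re }\Omega(v_1,\ldots,v_{p+1},\,\cdot\,,\ldots,\,\cdot\,)$ is, up to the obvious reordering of slots, just the $(m{-}p{-}1)$-form-valued object obtained by partially contracting $\textnormal{Re }\Omega$; and since $d\Omega = 0$ (as $\Omega$ is holomorphic with $\bar\partial\Omega=0$, and $\partial\Omega=0$ for degree reasons, or directly since $\nabla\Omega=0$), Cartan's formula collapses to $\mathcal{L}_v(\textnormal{Re }\Omega) = d(v\hook\textnormal{Re }\Omega)$, and similarly for $\textnormal{Im }\Omega$. Restricting to $N$ and applying $*_N$ then converts the exterior derivative $d$ on $N$, composed with the appropriate projection, into a codifferential: the key point is that for a form of the shape $v\hook\Omega$, restricted and projected onto $\Lambda^{p-1}N\otimes\Lambda^{m-p-1}T^*M|_N$, the composition $*_N d$ produces exactly $\pm\partial^*(v\hook\Omega)$ on the $(1,0)$-normal part and $\pm\bar\partial^*(v\hook\overline\Omega)$ on the $(0,1)$-normal part, with the sign and power-of-$i$ bookkeeping coming from the normalisation \eqref{eqn:omOm} of $\Omega$ relative to $\textnormal{vol}_N$ and from $*_N$ acting on the relevant bidegrees. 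Splitting $v = v_1\oplus v_2$ into holomorphic and antiholomorphic normal parts, $v_1\hook\Omega$ contributes only to the $\partial^*$ term and $v_2\hook\overline\Omega$ only to the $\bar\partial^*$ term (the cross terms vanishing for bidegree reasons, exactly as the $\pi_7(\sigma_1)=\pi_7(\sigma_2)=0$ step in Proposition \ref{prop:e}), which yields the stated formula with $j=0$ for $G,G_1$ and $j=1$ for $G_2$.

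For the final equivalence $\partial^*(v_1\hook\Omega)=0 \iff \bar\partial v_1 = 0$, the idea is that $v_1\mapsto v_1\hook\Omega$ is, via the adjunction isomorphism $K_M|_N \cong K_N\otimes\Lambda^{m-p}\nu_M^{*1,0}(N)$, an (anti)isomorphism of holomorphic bundles intertwining the holomorphic structure on $\nu_M^{1,0}(N)$ with that on the $K_N$-twisted bundle; under the Serre-duality pairing on the compact Kähler manifold $N$, the operator $\partial^*$ on that twisted bundle is the formal adjoint of $\bar\partial$ on the dual, so its kernel is the orthogonal complement of $\textnormal{Im }\bar\partial$, and by Hodge theory $\partial^*(v_1\hook\Omega)=0$ iff $v_1\hook\Omega$ is $\partial^*$-closed iff (since it is automatically in the right bidegree and $N$ is compact Kähler so the Hodge decomposition applies) the corresponding section is $\bar\partial$-harmonic, i.e. $\bar\partial v_1 = 0$. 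The cleanest route is probably to note directly that contraction with $\Omega$ followed by the musical isomorphisms sends the Dolbeault complex for $\nu_M^{1,0}(N)$ to (a conjugate-linear twist of) the one computing $\partial$-cohomology of $K_N\otimes\Lambda^{m-p}\nu_M^{*1,0}(N)$, and commute the adjoints accordingly.

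The main obstacle I expect is the sign/constant bookkeeping in the second stage: tracking how $*_N$, the projection onto $\Lambda^{p-1}N\otimes\Lambda^{m-p-1}T^*M|_N$, and the constant in \eqref{eqn:omOm} interact to produce precisely the factor $i^{p+j}(-1)^{p(p-1)/2+1+j}$ and the relative sign $(-1)^{p+j}$ between the two codifferential terms. This is essentially a linear-algebra computation at a point on a model Cayley/complex plane, analogous to the explicit frame computation in Proposition \ref{prop:caylin}, but it requires care to separate the $\textnormal{Re}$ and $\textnormal{Im}$ parts correctly and to confirm the cross terms vanish; once the constants are pinned down the rest is formal.
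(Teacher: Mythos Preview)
Your overall strategy is sound, but it differs from the paper's in two notable ways.

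For the linearisation, the paper does \emph{not} use Cartan's formula. Instead it writes $(\mathcal{L}_v\sigma)(e_{j_1},\dots,e_{j_{p+1}})$ via the connection formula (as in the proof of Proposition~\ref{prop:caylin}), uses that $\sigma$ is parallel and vanishes on $p{+}1$ tangent vectors to $N$, and then carries out an explicit computation in a unitary frame $\{e_1,Je_1,\dots,e_p,Je_p\}$, repeatedly exploiting $e_k\hook\Omega=-iJe_k\hook\Omega$ to separate the $\Omega$ and $\overline\Omega$ contributions and to produce the precise constants. Your Cartan-formula route is legitimate in principle, but note that $\sigma$ is a \emph{bundle-valued} form (values in $\Lambda^{m-p-1}T^*M$), so ``$*_N d$ becomes a codifferential'' is not automatic: you must track which slots are tangential and which are normal when restricting $d(v\hook\textnormal{Re }\Omega)$ to $N$, and this is where the frame computation you hoped to avoid effectively re-enters. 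The paper's connection-formula approach sidesteps this by working directly with $\nabla^\perp_{e_i}v$ from the start.

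For the final equivalence $\partial^*(v_1\hook\Omega)=0\iff\bar\partial v_1=0$, your Serre-duality/Hodge-theory argument is not what is needed: that would only identify kernels at the level of harmonic representatives, whereas the statement is a pointwise equivalence of first-order conditions on an arbitrary section $v_1$. The paper proves this by a short direct chain of equivalences in the frame, using that $\Omega$ is nowhere vanishing and parallel to show that $\bar\partial v_1=0$ is literally the same equation as $\partial^*(v_1\hook\Omega)=0$ after contraction with $\Omega$. Your ``cleanest route'' remark---that $v_1\mapsto v_1\hook\Omega$ intertwines the Dolbeault operators---is exactly the right idea and is what the paper actually does; drop the Hodge-theory detour and prove that intertwining directly.
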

\begin{cor}
Let $N$ be a two-dimensional compact complex submanifold of a four-dimensional Calabi--Yau manifold $M$. Then infinitesimal complex and Cayley deformations of $N$ are the same.
\end{cor}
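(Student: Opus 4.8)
The plan is to transport both notions into the single space $C^\infty(\nu^{1,0}_M(N)\oplus\Lambda^{0,2}N\otimes\nu^{1,0}_M(N))$, to observe that an infinitesimal Cayley deformation satisfies one linear equation whereas an infinitesimal complex deformation satisfies two, and then to show that on a compact $N$ the single equation already forces the other two.

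First I would record the two descriptions in a common space. By Proposition~\ref{prop:caylin}, writing a deformation as $v_1\oplus w$ with $v_1\in C^\infty(\nu^{1,0}_M(N))$ and $w\in C^\infty(\Lambda^{0,2}N\otimes\nu^{1,0}_M(N))$, the infinitesimal Cayley deformations of $N$ are precisely those with
\[
\bar\partial v_1+\bar\partial^* w=0.
\]
For the complex side, Proposition~\ref{prop:normal} identifies a complexified normal vector field $v_1+v_2$, with $v_2\in C^\infty(\nu^{0,1}_M(N))$, with such a pair $v_1\oplus w$ via $w=\tfrac14(v_2\hook\overline\Omega)^\sharp$. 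Since $\Omega$ has type $(4,0)$ and $\overline\Omega$ type $(0,4)$ on $M$, we have $(v_1+v_2)\hook\Omega=v_1\hook\Omega$ and $(v_1+v_2)\hook\overline\Omega=v_2\hook\overline\Omega$, so Proposition~\ref{prop:glin} says the infinitesimal complex deformations are precisely those $v_1\oplus w$ with $\partial^*(v_1\hook\Omega)=0$ and $\bar\partial^*(v_2\hook\overline\Omega)=0$. The first condition is equivalent to $\bar\partial v_1=0$ by the final assertion of Proposition~\ref{prop:glin}, and the second is equivalent to $\bar\partial^* w=0$ because $\Omega$ and the musical isomorphisms $\sharp$, $\flat$ are parallel for the (metric) normal connection and so intertwine the relevant codifferentials up to the constant $\tfrac14$. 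Hence, in the common space, the infinitesimal complex deformations of $N$ are exactly the $v_1\oplus w$ with $\bar\partial v_1=0$ \emph{and} $\bar\partial^* w=0$.

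It then remains to compare the Cayley equation $\bar\partial v_1+\bar\partial^* w=0$ with the two complex equations $\bar\partial v_1=0$ and $\bar\partial^* w=0$. That the latter imply the former is immediate. For the converse I would use that $\nu^{1,0}_M(N)$ is a holomorphic vector bundle over the compact complex manifold $N$, so that $\bar\partial^2=0$ on $\nu^{1,0}_M(N)$-valued forms and integration by parts over $N$ is available: pairing $\bar\partial v_1+\bar\partial^* w=0$ with itself in $L^2(N)$ and using
\[
\langle\bar\partial v_1,\bar\partial^* w\rangle_{L^2}=\langle\bar\partial(\bar\partial v_1),w\rangle_{L^2}=0
\]
gives $\|\bar\partial v_1\|_{L^2}^2+\|\bar\partial^* w\|_{L^2}^2=0$, hence $\bar\partial v_1=0$ and $\bar\partial^* w=0$. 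Equivalently, a solution of $\bar\partial v_1=-\bar\partial^* w$ on a compact $N$ must have both sides zero, as announced in the introduction. This shows the two kernels agree, and therefore that infinitesimal complex and Cayley deformations of $N$ coincide.

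I do not expect a genuinely difficult step: the real content is the one-line $L^2$-orthogonality argument, and it rests entirely on Propositions~\ref{prop:caylin} and~\ref{prop:glin}. The only point demanding care is the bookkeeping in the second paragraph above --- verifying that the \emph{a priori} different descriptions of the infinitesimal deformation spaces, one phrased on $\nu_M(N)\otimes\mathbb{C}$ and one on $\nu^{1,0}_M(N)\oplus\Lambda^{0,2}N\otimes\nu^{1,0}_M(N)$, really do match under the isomorphism of Proposition~\ref{prop:normal}, including the constant $\tfrac14$ and the fact that $\hook$, $\sharp$, $\flat$ and $\Omega$ are parallel and therefore commute with the Dolbeault operators and their formal adjoints appearing on both sides.
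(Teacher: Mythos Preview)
Your proposal is correct and follows essentially the same route as the paper: invoke Propositions~\ref{prop:normal} and~\ref{prop:caylin} to characterise infinitesimal Cayley deformations by $\bar\partial v_1+\tfrac14\bar\partial^*(v_2\hook\overline\Omega)=0$, invoke Proposition~\ref{prop:glin} to characterise infinitesimal complex deformations by $\bar\partial v_1=0=\bar\partial^*(v_2\hook\overline\Omega)$, and conclude by compactness. The paper's proof simply says ``the result follows since $N$ is compact'' at the last step, whereas you spell out the $L^2$-orthogonality argument $\langle\bar\partial v_1,\bar\partial^* w\rangle_{L^2}=\langle\bar\partial^2 v_1,w\rangle_{L^2}=0$ explicitly; your additional bookkeeping on the $\tfrac14$ and the parallelism of $\sharp$, $\flat$, $\Omega$ is more careful than the paper itself.
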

\begin{proof}[Proof of Corollary.]
By Propositions \ref{prop:normal} and \ref{prop:caylin}, a complexified normal vector field $v=v_1\oplus v_2$ is an infinitesimal Cayley deformation of $N$ if
\[
\bar\partial v_1+\frac{1}{4}\bar\partial^*(v_2\hook \overline{\Omega})=0.
\]
By Proposition \ref{prop:glin}, $v_1\oplus v_2$ is an infinitesimal complex deformation of $N$ if
\[
\bar\partial v_1=0=\bar\partial^*(v_2\hook \overline\Omega).
\]
The result follows since $N$ is compact.
\end{proof}
\begin{proof}[Proof of Proposition \ref{prop:caylin}.]
 By definition, we have that
\[
 dG|_0(V)=\frac{d}{dt}G(tv)|_{t=0}=*_N(\mathcal{L}_v\sigma|_N).
\]
Choose an orthonormal frame $\{e_1,\dots , e_{2p}\}$ for $TN$ and consider
\begin{equation}\label{eqn:derivsig}
 (\mathcal{L}_v\sigma)(e_{j_1},\dots e_{j_{p+1}})=\sum_{i=1}^{p+1}(-1)^{i+1}\sigma(\nabla^\perp_{e_{j_i}}v, e_{j_1}, \dots , \hat{e}_{j_i}, \dots , e_{j_{p+1}}),
\end{equation}
where $\hat{e}_{j_i}$ means that $\sigma$ is not evaluated on this element, and we have used that $\sigma$ is parallel and vanishes when evaluated on $p+1$ tangent vectors to $N$. We notice that
\[
\Omega(e_k,Je_k,\,\cdot\, ,\dots , \, \cdot \,)=0=\overline{\Omega}(e_k,Je_k,\,\cdot\, ,\dots , \, \cdot \,).
\]
Therefore taking a frame for $TN$ of the form $\{e_1,Je_1, \dots , e_p,Je_p\}$, we see that only terms in \eqref{eqn:derivsig} of the form
\begin{align*}
(\mathcal{L}_v\sigma)(e_1,\dots ,e_p, Je_{j})&=(-1)^{j-1}\sigma(\nabla^\perp_{e_j}v, e_{1}, \dots , \hat{e}_{j}, \dots , e_p,Je_j) \\
& +(-1)^p\sigma(\nabla^\perp_{Je_j}v, e_{1}, \dots , e_p), \\
(\mathcal{L}_v\sigma)(Je_1,\dots ,Je_p, e_{j})&=(-1)^{j-1}\sigma(\nabla^\perp_{Je_j}v, Je_{1}, \dots , \widehat{Je}_{j}, \dots , Je_p, e_j) \\
& +(-1)^p\sigma(\nabla^\perp_{e_j}v, Je_{1}, \dots , Je_p) ,
\end{align*}
for $j\in \{1,\dots ,p\}$ are nonzero.

We have that $e_k-iJe_k$ is of type $(1,0)$, while $e_k+iJe_k$ is of type $(0,1)$. Therefore 
\begin{align*}
 (e_k+iJe_k)\hook \Omega=0=(e_k-iJe_k)\hook \overline{\Omega},
\end{align*}
and so we see that
\begin{align}\label{eqn:omhook}
 e_k\hook\Omega&=-iJe_k\hook\Omega, \\ \nonumber
e_k\hook \overline{\Omega}&=iJe_k\hook \overline{\Omega}.
\end{align}
So we have that
\begin{align*}
&\sigma(\nabla^\perp_{e_j}v, e_{1}, \dots , \hat{e}_{j}, \dots , e_p,Je_j)= 
\\
&\frac{i}{2}(-i)^{p-1}\Omega(\nabla^\perp_{e_j}v, Je_1,\dots , \widehat{Je_j},\dots , Je_p,e_j,\, \cdot \, ,\dots ,\,\cdot \,) \\
-&\frac{i}{2}i^{p-1}\overline{\Omega}(\nabla^\perp_{e_j}v, Je_1,\dots , \widehat{Je_j},\dots , Je_p,e_j,\, \cdot \, ,\dots ,\,\cdot \,).
\end{align*}
Calculating that
\begin{align*}
&\sigma(\nabla^\perp_{Je_j}v, e_{1}, \dots , e_p)= \\
&\frac{(-i)^p}{2}\Omega(\nabla^\perp_{Je_j}v, Je_1,\dots , Je_p, \,\cdot\, , \dots , \,\cdot \, )+\frac{i^p}{2}\overline{\Omega}(\nabla^\perp_{Je_j}v, Je_1,\dots , Je_p, \,\cdot\, , \dots , \,\cdot \, ),
\end{align*}
we find that
\begin{align*}
(*_N\mathcal{L}_v\sigma)(Je_1,\dots , \widehat{Je_j}, \dots , Je_p)&=(-1)^{\frac{p(p-1)}{2}+j-1}(\mathcal{L}_v\sigma)(e_1,\dots ,e_p, Je_{j}) \\
=\frac{i^p}{2}(-1)^{\frac{p(p-1)}{2}}(\Omega+(-1)^p\overline{\Omega})&(\nabla^\perp_{e_j}v,e_j, Je_1, \dots , \widehat{Je_j}, \dots , Je_p,\, \cdot \, ,\dots ,\,\cdot \,) \\
+\frac{i^p}{2}(-1)^{\frac{p(p-1)}{2}}(\Omega+(-1)^p\overline{\Omega})&(\nabla^\perp_{Je_j}v,Je_j, Je_1, \dots , \widehat{Je_j}, \dots , Je_p,\, \cdot \, ,\dots ,\,\cdot \,).
\end{align*}
A similar calculation yields that
\begin{align*}
(*_N\mathcal{L}_v\sigma)(e_1,\dots , \hat{e}_j, \dots , e_p)&=(-1)^{\frac{p(p+1)}{2}+j-1}(\mathcal{L}_v\sigma)(Je_1,\dots ,Je_p, e_{j}) \\
=\frac{i^p}{2}(-1)^{\frac{p(p+1)}{2}}((-1)^p\Omega+\overline{\Omega})&(\nabla^\perp_{Je_j}v,Je_j, e_1, \dots , \hat{e}_j, \dots , e_p,\, \cdot \, ,\dots ,\,\cdot \,) \\
+\frac{i^p}{2}(-1)^{\frac{p(p+1)}{2}}((-1)^p\Omega+\overline{\Omega})&(\nabla^\perp_{e_j}v,e_j, e_1, \dots , \hat{e}_j, \dots , e_p,\, \cdot \, ,\dots ,\,\cdot \,).
\end{align*}
Therefore we find that
\begin{align*}
*_N\mathcal{L}_v\sigma&=(-1)^{\frac{p(p-1)}{2}}i^p\sum_{j=1}^p\Omega(\nabla^\perp_{e_j},e_j,\, \cdot \, ,\dots ,\,\cdot \,)+\Omega(\nabla^\perp_{Je_j},Je_j,\, \cdot \, ,\dots ,\,\cdot \,) \\
&+(-1)^{\frac{p(p+1)}{2}}i^p\sum_{j=1}^p\overline{\Omega}(\nabla^\perp_{e_j},e_j,\, \cdot \, ,\dots ,\,\cdot \,)+\overline{\Omega}(\nabla^\perp_{Je_j},Je_j,\, \cdot \, ,\dots ,\,\cdot \,) \\
&=(-1)^{\frac{p(p-1)}{2}}i^p\sum_{j=1}^pe_j\hook(\nabla^\perp_{e_j}v\hook\Omega)+Je_j\hook(\nabla^\perp_{Je_j}v\hook\Omega) \\
&+(-1)^{\frac{p(p+1)}{2}}i^p\sum_{j=1}^pe_j\hook(\nabla^\perp_{e_j}v\hook\overline{\Omega})+Je_j\hook(\nabla^\perp_{Je_j}v\hook\overline{\Omega}) \\
&=(-1)^{\frac{p(p-1)}{2}}i^p\sum_{j=1}^pe_j\hook\nabla_{e_j}(v\hook\Omega)+Je_j\hook\nabla_{e_j}(v\hook\Omega) \\
&+(-1)^{\frac{p(p+1)}{2}}i^p\sum_{j=1}^pe_j\hook\nabla_{e_j}(v\hook\overline{\Omega})+Je_j\hook\nabla_{e_j}(v\hook\overline{\Omega})\\
&=i^p(-1)^{\frac{p(p-1)}{2}+1}(\partial^*(v\hook\Omega)+(-1)^p\bar\partial^*(v\hook \overline{\Omega})).
\end{align*}
The expressions for the linearisations of $G_1$ and $G_2$ follow similarly. It remains to show that if $v_1\in \nu^{1,0}_M(N)$ then
\[
 \bar\partial v_1=0 \iff \partial^*(v_1\hook\Omega)=0.
\]
Let $\{e_1,Je_1,\dots , e_p, Je_p\}$ be a unitary frame for $TN$. Then we have that
\begin{align*}
 \bar\partial v_1=0 &\iff(e^1+iJe^1)\wedge(\nabla^\perp_{e_1}+i\nabla^\perp_{Je_1})v_1+\dots \\
  &+(e^p+iJe^p)\wedge(\nabla^\perp_{e_p}+i\nabla^\perp_{Je_p})v_1 =0 \\
&\iff (\nabla^\perp_{e_1}+i\nabla^\perp_{Je_1})v_1=\dots=(\nabla^\perp_{e_p}+i\nabla^\perp_{Je_p})v_1=0 \\
&\iff [(\nabla^\perp_{e_1}+i\nabla^\perp_{Je_1})v_1]\hook \Omega=\dots =[(\nabla^\perp_{e_p}+i\nabla^\perp_{Je_p})v_1]\hook\Omega=0 \\
&\iff \Omega((\nabla^\perp_{e_1}+i\nabla^\perp_{Je_1})v_1, e_1-iJe_1,\,\cdot\,, \dots ,\,\cdot\,)+\dots  \\
&+\Omega((\nabla^\perp_{e_p}+i\nabla^\perp_{Je_p})v_1, e_p-iJe_p,\,\cdot\,,\dots ,\,\cdot\,)=0 \\
&\iff 2\sum_{i=1}^p\Omega(\nabla_{e_i}^\perp v_1,e_i,\,\cdot\, ,\dots , \,\cdot\,)+\Omega(\nabla^\perp_{Je_i}v,Je_i, \,\cdot \, , \dots ,\,\cdot \, )=0 \\
&\iff 2\sum_{i=1}^pe_i\hook \nabla_{e_i}(v_1\hook\Omega)+Je_i\hook \nabla_{Je_i}(v_1\hook\Omega)=0 \\
&\iff -2\partial^*(v_1\hook \Omega)=0,
\end{align*}
where we have exploited the property that $\Omega$ never vanishes, that $\nabla^\perp_{e_i} v$ is of type $(1,0)$ and Equation \eqref{eqn:omhook}.
\end{proof}
Similarly to Proposition \ref{prop:glin} we may identify the kernels of the operators $\bar\partial$ and $\bar\partial^*$. This will be helpful when we compare the results of this section to Kodaira's theorem \cite[Theorem 1]{MR0133841}.
\begin{cor}\label{cor:glin}
Let $N^p$ be a complex submanifold a Calabi--Yau manifold $M^m$. Consider the operators
\begin{align*}
 \bar\partial:C^\infty(\nu^{1,0}_M(N))&\to C^\infty(\Lambda^{0,1}N\otimes \nu^{1,0}_M(N)), \\
\bar\partial^*:C^\infty(\Lambda^{0,p}N\otimes\Lambda^{m-p-1}\nu^{1,0}_M(N))&\to C^\infty(\Lambda^{0,p-1}N\otimes \Lambda^{m-p-1}\nu^{1,0}_M(N)).
\end{align*}
Then there is an isomorphism
\[
 \nu^{1,0}_M(N)\to \Lambda^{0,p}N\otimes \Lambda^{m-p-1}\nu^{1,0}_M(N),
\]
that induces an isomorphism
\[
 \textnormal{Ker }\bar\partial \to \textnormal{Ker }\bar\partial^*.
\]
\end{cor}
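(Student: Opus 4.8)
The plan is to write down an explicit bundle isomorphism and then read off the identification of kernels from the computation already carried out at the end of the proof of Proposition \ref{prop:glin}, so in particular no compactness or Hodge theory is needed.

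\emph{The bundle isomorphism.} Following the pattern of Proposition \ref{prop:normal}, I would build it from contraction with the holomorphic volume form. For $v\in\nu^{1,0}_M(N)$ the form $v\hook\Omega|_N$ is a section of $\Lambda^{p,0}N\otimes\Lambda^{m-p-1}\nu^{*1,0}_M(N)$: by the adjunction formula $\Omega|_N$ is a nowhere vanishing section of $K_M|_N\cong\Lambda^{p,0}N\otimes\Lambda^{m-p}\nu^{*1,0}_M(N)$, and since $v$ is normal it pairs nontrivially only with the $\nu^{*1,0}_M(N)$-factor. Because $\Omega$ vanishes nowhere, $v\mapsto v\hook\Omega|_N$ is fibrewise injective, hence an isomorphism onto $\Lambda^{p,0}N\otimes\Lambda^{m-p-1}\nu^{*1,0}_M(N)$, the ranks of both sides being $m-p$. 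Composing with complex conjugation and the musical isomorphism $\sharp\colon\nu^{*0,1}_M(N)\to\nu^{1,0}_M(N)$ on the normal factor gives (up to a normalising constant) a bundle isomorphism
\[
\Theta\colon\nu^{1,0}_M(N)\longrightarrow\Lambda^{0,p}N\otimes\Lambda^{m-p-1}\nu^{1,0}_M(N),\qquad \Theta(v)=\bigl(\,\overline{v\hook\Omega}\,\bigr)^{\sharp}=\bigl(\,\bar v\hook\overline{\Omega}\,\bigr)^{\sharp}.
\]
(This $\Theta$ is conjugate-linear over $\mathbb{C}$, which is harmless for the statement, as only dimensions are at issue; for $p=2$, $m=4$ it is, up to scale, the isomorphism of Proposition \ref{prop:normal} precomposed with conjugation.)

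\emph{Identification of the kernels.} The final display in the proof of Proposition \ref{prop:glin} gives, for $v\in C^\infty(\nu^{1,0}_M(N))$, that $\bar\partial v=0$ if and only if $\partial^*(v\hook\Omega)=0$. Complex conjugation interchanges $\partial$ with $\bar\partial$, and hence $\partial^*$ with $\bar\partial^*$ (conjugation carries the Chern connection of a holomorphic bundle to that of its conjugate), so $\partial^*(v\hook\Omega)=0$ if and only if $\bar\partial^*(\overline{v\hook\Omega})=0$, the operator now acting on $\Lambda^{0,p}N\otimes\Lambda^{m-p-1}\nu^{*0,1}_M(N)$-valued forms. Finally, since $M$ is K\"ahler the induced connection on $\nu^{1,0}_M(N)$ is the Chern connection and the complex-bilinear metric is parallel, so $\sharp$ is a parallel isometry identifying the Dolbeault-type operators on $\Lambda^{m-p-1}\nu^{*0,1}_M(N)$-valued and $\Lambda^{m-p-1}\nu^{1,0}_M(N)$-valued forms; therefore $\bar\partial^*(\overline{v\hook\Omega})=0$ if and only if $\bar\partial^*(\Theta v)=0$. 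Chaining these equivalences shows that $\Theta$ restricts to a bijection $\text{Ker }\bar\partial\to\text{Ker }\bar\partial^*$, as required.

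The step I expect to need the most care is the last one: making precise that, under the conjugation identification $\overline{\nu^{*1,0}_M(N)}=\nu^{*0,1}_M(N)$, the musical isomorphism $\sharp$ genuinely intertwines the relevant $\bar\partial^*$-operators, i.e.\ that $\nu^{*0,1}_M(N)$ inherits exactly the holomorphic and Hermitian structure for which $\sharp$ is a holomorphic isometry. This is precisely where K\"ahlerness of $M$ is used, via parallelism of the metric and of $J$ for the Levi-Civita connection; once it is granted, the rest is bookkeeping with degrees, types and conjugations.
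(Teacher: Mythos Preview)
Your proposal is correct and follows essentially the same route as the paper: the paper defines the very same map $v\mapsto(\bar v\hook\overline{\Omega})^\sharp$, invokes the $\bar\partial v=0\iff\partial^*(v\hook\Omega)=0$ equivalence from Proposition~\ref{prop:glin}, and then observes $\bar\partial^*(\bar v\hook\overline{\Omega})=\overline{\partial^*(v\hook\Omega)}$. The only difference is that you spell out more carefully why $\sharp$ intertwines the relevant $\bar\partial^*$-operators (the paper leaves this implicit), and you flag that the isomorphism is conjugate-linear; both are fair elaborations rather than departures.
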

\begin{proof}
 Let $\Omega$ be a holomorphic volume form on $M$. Then the isomorphism
\[
 \nu^{1,0}_M(N)\to \Lambda^{0,p}N\otimes\Lambda^{m-p-1} \nu^{1,0}_M(N),
\]
is given by
\[
 v\mapsto (\bar{v}\hook\overline{\Omega})^\sharp,
\]
where $\sharp:\nu^{*0,1}_M(N)\to \nu^{1,0}_M(N)$ is the musical isomorphism. The argument of Proposition \ref{prop:normal} extends to arbitrary dimensions and so this is an isomorphism. We proved in Proposition \ref{prop:glin} that
\[
 \bar\partial v=0 \iff \partial^*(v\hook \Omega)=0.
\]
Since 
\[
 \bar\partial^*(\bar{v}\hook \overline{\Omega})=\overline{\partial^*(v\hook \Omega)},
\]
the result follows.
\end{proof}

The following lemma allows us to see that the kernel of $G$ defined in Proposition \ref{prop:compdefs} is equal to the kernel of the linear part of $G$ computed in Proposition \ref{prop:glin}.

\begin{lem}\label{lem:glin}
Let $N^p$ be a complex submanifold of a Calabi--Yau manifold $M^m$. Let $v\in C^\infty(\nu_M(N)\otimes \mathbb{C})$ satisfy
\[
\partial^*(v\hook \Omega)=0=\bar\partial^*(v\hook \overline{\Omega}). 
\]
Let $G,G_1$ and $G_2$ be the operators defined in Equations \eqref{eqn:cxop}, \eqref{eqn:cxop1} and \eqref{eqn:cxop2}. Then $G(v)=0=G_1(v)=G_2(v)$.
\end{lem}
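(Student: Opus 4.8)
The hypothesis on $v$ is exactly the statement $dG|_0(v)=0$ (and $dG_1|_0(v)=0=dG_2|_0(v)$): by Proposition \ref{prop:glin} the linearisation of $G$ is a nonzero constant times $v\mapsto\partial^*(v\hook\Omega)+(-1)^p\bar\partial^*(v\hook\overline\Omega)$, and since the two summands lie in complementary subbundles of the (complexified) target, this vanishes precisely when $\partial^*(v\hook\Omega)=0$ and $\bar\partial^*(v\hook\overline\Omega)=0$. So the lemma asserts $\ker(dG|_0)\subseteq\ker G$, and likewise for $G_1$ and $G_2$, and the plan is to prove this pointwise on $N$, in the spirit of the local argument in Proposition \ref{prop:caypde}. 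First I would put the hypothesis in a usable form. Writing $v=v_1\oplus v_2$ with $v_1\in C^\infty(\nu^{1,0}_M(N))$ and $v_2\in C^\infty(\nu^{0,1}_M(N))$, we have $v\hook\Omega=v_1\hook\Omega$ and $v\hook\overline\Omega=v_2\hook\overline\Omega$, so by Proposition \ref{prop:glin} and the identity $\bar\partial^*(\bar w\hook\overline\Omega)=\overline{\partial^*(w\hook\Omega)}$ from Corollary \ref{cor:glin}, the hypothesis is equivalent to the statements that $v_1$ and $\overline{v_2}$ are holomorphic sections of $\nu^{1,0}_M(N)$, i.e.\ $\nabla^\perp_{\bar Z}v_1=0=\nabla^\perp_Z v_2$ at every point, for all $(1,0)$-vector fields $Z$ tangent to $N$.

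Fix $x\in N$. As in the proof of Proposition \ref{prop:caypde}, the tangent space of $N_v=\exp_v(N)$ at $\exp_v(x)$ is spanned by the vectors $d(\exp_v)_x(e_j)$ for an orthonormal frame $\{e_j\}$ of $T_xN$, and each such vector expands in $v(x)$ and $\nabla v|_x$ — the leading correction to $e_j$ being $\nabla_{e_j}v$, followed by terms built from the curvature of $M$ — so $\exp_v^*(\sigma|_{N_v})|_x$, and likewise $\exp_v^*(\textnormal{Re}\,\Omega|_{N_v})|_x$ and $\exp_v^*(\textnormal{Im}\,\Omega|_{N_v})|_x$, is a convergent power series in the components of the $1$-jet of $v$ at $x$. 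By Proposition \ref{prop:compdefs} (and the underlying Lemma \ref{lem:specialform}), when $m\ne p+1$ the form $\sigma$ vanishes on a plane if and only if that plane is a complex subspace, and when $m=p+1$ the same characterisation holds for the pair $\textnormal{Re}\,\Omega$, $\textnormal{Im}\,\Omega$; so it remains to show that the graph plane $\textnormal{span}\{d(\exp_v)_x(e_j)\}$ is $J$-invariant whenever $\nabla^\perp_{\bar Z}v_1=0=\nabla^\perp_Z v_2$. I would do this by writing $\Omega$ in a unitary frame, expanding the contractions $\Omega(d(\exp_v)_x(e_{j_1}),\dots,d(\exp_v)_x(e_{j_{p+1}}),\,\cdot\,,\dots,\,\cdot\,)$, sorting the resulting terms by the $(1,0)$/$(0,1)$ type of the normal data they contain, and substituting $\nabla^\perp_{\bar Z}v_1=0$ and $\nabla^\perp_Z v_2=0$ to see that each term cancels — the same mechanism by which, in Proposition \ref{prop:caypde}, the nonlinear terms of $\tau$ disappeared once the linear relations were imposed.

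The main obstacle is this computation. Because it must be carried out for arbitrary $p$ and $m$ it cannot be reduced to solving a single finite system, so one needs a structural argument: a contraction of $\Omega$ with the graph vectors is nonzero only when it captures enough $(1,0)$-directions, and the hypothesis forces the contributions of the non-holomorphic part of $v_1$ and of the non-antiholomorphic part of $v_2$ — which is exactly where the $\nabla^\perp$-terms, and the tangential and ambient-curvature contributions carried by $d(\exp_v)_x$, enter — to drop out. Keeping track of the contraction combinatorics, and treating separately the case $m=p+1$ (where $\sigma$ is unavailable and one must show that $\textnormal{Re}\,\Omega$ and $\textnormal{Im}\,\Omega$ both restrict to zero on the graph plane, so that $G_1(v)=0=G_2(v)$), is where the work concentrates. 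An alternative worth trying is a Tian--Todorov-style argument: use the isomorphisms of Proposition \ref{prop:normal} and Corollary \ref{cor:glin} to rewrite the higher-order part of $G$ on $\ker(dG|_0)$ as a wedge of $\bar\partial$-closed forms and then invoke a $\partial\bar\partial$-type lemma, but making this precise appears to require the same pointwise input in the end.
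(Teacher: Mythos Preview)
Your overall plan matches the paper's: reduce to a pointwise statement about a graph over a complex $p$-plane in $\mathbb{C}^m$, and show that the linear relations coming from the hypothesis force $\sigma$ (and $\textnormal{Im}\,\Omega$ when $m=p+1$) to vanish on every $(p+1)$-tuple of spanning vectors of the graph. Where your sketch stops short is the actual mechanism for the cancellation, and the paper has a clean one you are missing.

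Writing the graph vectors as $v_j = e_j + \sum_i\lambda^j_i e_i$ and $w_j = Je_j + \sum_i\mu^j_i e_i$, the paper first checks that the linear-in-$(\lambda,\mu)$ part of $\sigma$ on the graph reduces to the single family of equations $(w_j - iv_j)\hook\beta = 0$, where $\Omega = e^{i\phi}\alpha\wedge\beta$ with $\alpha$ the tangential and $\beta$ the normal factor. The key step is then a pigeonhole: any $p+1$ vectors chosen from $v_1,\dots,v_p,w_1,\dots,w_p$ must contain a full pair $v_{j_0},w_{j_0}$ for some $j_0$, and a short direct computation shows $\Omega(v_{j_0},w_{j_0},\,\cdot\,,\dots,\,\cdot\,) = 0$ once $(w_{j_0}-iv_{j_0})\hook\beta = 0$. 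Thus every contraction of $\Omega$ with $p+1$ graph vectors vanishes at once --- no inductive combinatorics and no term-by-term sorting by type. Your analogy with Proposition~\ref{prop:caypde} is therefore somewhat misleading: there the argument was a brute-force solve-and-substitute in finitely many variables, whereas here the paper supplies exactly the structural shortcut you said you were looking for.

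On your curvature worry: the paper does not engage with it. The coefficients $\lambda^j_i,\mu^j_i$ are treated as abstract parameters and the implication proved is purely algebraic --- the linear equations in $(\lambda,\mu)$ already cut out the zero locus of $\sigma$ on graph planes --- without discussing how these coefficients are related to the $1$-jet of $v$ through $d(\exp_v)$ and the ambient geometry. Your Tian--Todorov alternative is not pursued either.
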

\begin{proof}
The argument here is similar to the argument of Proposition \ref{prop:caypde}.

At each point of $N$, we can write the tangent space of the (small) deformation $N_{\tilde{v}}$ as a normal graph over the tangent space of $N$. So it suffices to prove this proposition for a normal graph over a $p$-dimensional complex subspace of $\mathbb{C}^m$. Suppose that this graph is described by
\begin{align*}
 v_j&=e_j+\sum\lambda^j_i e_i, \\
w_j&=Je_j+\sum\mu^j_i e_i,
\end{align*}
where $i$ runs over $p+1$ to  $m$ and $m+p+1$ to $2m$ with $e_{m+j}=Je_j$, for $\lambda^j_i,\mu^j_i\in \mathbb{R}$.

We have that
\[
\Omega_{f(x)}=e^{i\phi}(e^1-iJe^1)\wedge \dots \wedge (e^p-iJe^p)\wedge (e^{p+1}-iJe^{p+1})\wedge \dots \wedge (e^m-iJe^m).
\]
for some $\phi\in [0,2\pi)$, where again $Je^j=-g(Je_j,\,\cdot \,)$. Write
\[
\alpha=(e^1-iJe^1)\wedge \dots \wedge (e^p-iJe^p), \quad \beta= (e^{p+1}-iJe^{p+1})\wedge \dots \wedge (e^m-iJe^m).
\]
We first find the linear terms of $\sigma=0$. We find that the only nonzero linear terms come from evaluating
\[
\sigma(v_1,\dots , v_p,w_j)=0=\sigma(w_1,\dots ,w_p,v_k),
\]
if $p+1\ne m$ and if $p+1=m$ additionally
\[
\text{Im }\Omega(v_1,\dots v_p, w_j)=0=\text{Im }\Omega(w_1,\dots , w_p, v_j).
\]
These terms are 
\begin{align*}
&\text{Re }(e^{i\phi}\alpha(v_1,\dots , v_p)\wedge (w_j\hook \beta))
\\
-&\text{Re }(e^{i\phi}\alpha(v_1,\dots , v_{j-1},w_j,v_{j+1},\dots , v_p)\wedge (v_j\hook \beta)),
\end{align*}
and additionally if $p+1=m$,
\begin{align*}
&\text{Im }(e^{i\phi}\alpha(v_1,\dots , v_p)\wedge (w_j\hook \beta)) \\
-&\text{Im }(e^{i\phi}\alpha(v_1,\dots , v_{j-1},w_j,v_{j+1},\dots , v_p)\wedge (v_j\hook \beta)).
\end{align*}
We have that
\[
\alpha(v_1,\dots v_p)=1, \quad \alpha(v_1,\dots , v_{j-1},w_j,v_{j+1},\dots v_p)=i,
\]
and so these linear terms are of the form
\begin{equation}\label{eqn:lin}
\text{Re }[e^{i\phi}(w_j\hook\beta-iv_j\hook \beta)]=0,
\end{equation}
for $j=1,\dots , p$. Notice that this implies that
\[
e^{i\phi}(w_j\hook\beta-iv_j\hook \beta)=0;
\]
indeed, if $p+1=m$ then this follows immediately. Otherwise note that
\[
(w_j\hook\beta-iv_j\hook \beta)=[(\mu^j_k+\lambda^j_{k+m})+i(\mu^j_{k+m}-\lambda^j_k)]e_k\hook\beta,
\]
and since $\text{Re }(e_k\hook \beta)\ne \text{Im }(e_k\hook \beta)$, we have that
\begin{align*}
\text{Re }[e^{i\phi}(w_j\hook\beta-iv_j\hook \beta)]=0&\iff \mu^j_k+\lambda^j_{k+m}=0=\mu^j_{k+m} -\lambda^j_k\\
 &\iff e^{i\phi}(w_j\hook\beta-iv_j\hook \beta)=0.
\end{align*}
Now given any $p+1$ of $\{v_1, \dots , v_p, w_1,\dots , w_p\}$, we must have that there is some $j_0$ so that two of these vectors are $v_{j_0}, w_{j_0}$. Notice that
\begin{align*}
\Omega(v_{j_0},w_{j_0},\, \cdot\, , \dots , \, \cdot \,)&=(-1)^{p-1}(v_{j_0}\hook \alpha)\wedge (w_{j_0}\hook \beta) +(-1)^{p}(w_{j_0}\hook \alpha)\wedge (v_{j_0}\hook \beta) \\
&+(-1)^{p}\alpha\wedge\beta(v_{j_0},w_{j_0},\, \cdot\, , \dots , \, \cdot \,).
\end{align*}
Since
\[
(v_{j_0}\hook \alpha)\wedge (w_{j_0}\hook \beta) -(w_{j_0}\hook \alpha)\wedge (v_{j_0}\hook \beta)=(e_{j_0}\hook \alpha)\wedge (w_{j_0}-iv_{j_0}\hook \beta)=0,
\]
these terms will vanish. Now
\[
\beta(v_{j_0},w_{j_0},\, \cdot\, , \dots , \, \cdot \,)=\frac{i}{2}\beta(w_{j_0}-iv_{j_0},w_{j_0}+iv_{j_0},\,\cdot \, , \dots , \, \cdot \,)=0,
\]
and so we see that \eqref{eqn:lin} vanishing implies that $\sigma$ (and $\text{Im }\Omega$) vanish on any $p+1$ tangent vectors to $N'$.
\end{proof}
We can now prove a result on the moduli space of complex deformations of a complex submanifold of a Calabi--Yau manifold.
\begin{thm}\label{thm:cxdefs}
Let $N^p$ be a compact complex submanifold of a Calabi--Yau manifold $M^m$. Then the moduli space of complex deformations of $N$ in $M$ is a smooth manifold of dimension 
\[
2 \,\textnormal{dim}_{\mathbb{C}} \textnormal{Ker }\bar\partial=\textnormal{dim}_\mathbb{R}\bar\partial,
\]
where 
\[
\bar\partial:C^\infty(\nu^{1,0}_M(N))\to C^\infty(\Lambda^{0,1}N\otimes \nu^{1,0}_M(N)).
\]
\end{thm}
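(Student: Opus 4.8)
The plan is to exploit the fact --- already available from Proposition \ref{prop:comppde} and Lemma \ref{lem:glin} --- that the nonlinear equation governing complex deformations is equivalent to its own linearisation; consequently the moduli space is cut out, near $N$, by a linear equation, and the theorem follows by identifying the solution space with $\textnormal{Ker }\bar\partial$ and using compactness of $N$. No Kuranishi model or implicit function theorem is required: this is precisely the sense in which the obstructions, although they need not vanish, do not contribute to the moduli space.

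In detail, I would first apply Proposition \ref{prop:comppde}: an open neighbourhood of $N$ in the moduli space of complex deformations is homeomorphic to an open neighbourhood of $0$ in the zero set of the operator $G$ on small normal vector fields (or, when $m=p+1$, in the common zero set of $G_1$ and $G_2$). Then, by Lemma \ref{lem:glin} combined with Proposition \ref{prop:glin}, this zero set coincides with the kernel of the linearisation at $0$, namely
\[
\{\, v\in C^\infty(\nu_M(N)\otimes\mathbb{C}) : \partial^*(v\hook\Omega)=0=\bar\partial^*(v\hook\overline{\Omega})\,\}.
\]
The case $m=p+1$ needs no separate argument, since the extra $\textnormal{Im }\Omega$ conditions appearing there yield the same linear system. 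Next I would translate this into holomorphic data: a real normal vector field decomposes as $v=v_1\oplus\overline{v_1}$ under $\nu_M(N)\otimes\mathbb{C}\cong\nu^{1,0}_M(N)\oplus\nu^{0,1}_M(N)$, where $v_1=v^{1,0}$, and $v\mapsto v_1$ is a real-linear isomorphism onto $C^\infty(\nu^{1,0}_M(N))$. Because $\Omega$ has type $(m,0)$, we have $v\hook\Omega=v_1\hook\Omega$ and $v\hook\overline{\Omega}=\overline{v_1\hook\Omega}$; since $\partial^*(v_1\hook\Omega)$ and its conjugate lie in complementary (conjugate) summands of the target bundle of $G$, the linear equation reduces to $\partial^*(v_1\hook\Omega)=0$, which by the final assertion of Proposition \ref{prop:glin} (equivalently Corollary \ref{cor:glin}) is equivalent to $\bar\partial v_1=0$. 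Thus, near $N$, the moduli space is homeomorphic to an open neighbourhood of $0$ in $\textnormal{Ker }\bar\partial$.

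Finally I would note that, as $N$ is compact and $\nu^{1,0}_M(N)$ is a holomorphic vector bundle, $\textnormal{Ker }\bar\partial\cong H^0(N,\nu^{1,0}_M(N))$ is a finite-dimensional complex vector space --- for instance by Hodge theory for the elliptic operator $\bar\partial\bar\partial^*+\bar\partial^*\bar\partial$ on $N$. An open subset of a finite-dimensional vector space is a smooth manifold, so the moduli space is a smooth manifold near $N$, of real dimension $\dim_{\mathbb{R}}\textnormal{Ker }\bar\partial=2\dim_{\mathbb{C}}\textnormal{Ker }\bar\partial$, the factor $2$ being the real dimension of the complex vector space $\textnormal{Ker }\bar\partial$.

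The one substantive point is the second step: that $\textnormal{Ker }G$ equals the kernel of its linearisation, with no higher-order corrections surviving. This is the content of Lemma \ref{lem:glin}, whose proof reduces the statement to the pointwise linear algebra of the holomorphic volume form along a normal graph over a complex $p$-plane in $\mathbb{C}^m$; it is exactly here that embeddedness of $N$ enters, both to furnish a tubular neighbourhood and to represent nearby deformations as normal graphs. Everything else is routine bookkeeping together with standard elliptic theory on the compact manifold $N$.
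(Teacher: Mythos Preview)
Your proposal is correct and follows essentially the same route as the paper: identify the moduli space with the zero set of $G$ via Proposition~\ref{prop:comppde}, invoke Lemma~\ref{lem:glin} together with Proposition~\ref{prop:glin} to reduce to the linearised equations $\partial^*(v\hook\Omega)=0=\bar\partial^*(v\hook\overline{\Omega})$, and then translate these into $\bar\partial v_1=0$ using Proposition~\ref{prop:glin} and Corollary~\ref{cor:glin}. Your treatment is in fact slightly more explicit than the paper's in two respects: you spell out the real-versus-complex bookkeeping (that for a real normal field $v=v_1\oplus\overline{v_1}$ the two linear conditions are complex conjugates of one another, so only one survives), and you record why the resulting kernel is finite-dimensional and hence a smooth manifold.
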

\begin{proof}
By Proposition \ref{prop:comppde} we can identify the moduli space with the kernel of the partial differential operator \eqref{eqn:cxop}, or kernels of the partial differential operators \eqref{eqn:cxop1} and \eqref{eqn:cxop2}. By Lemma \ref{lem:glin} the kernel of each of these operators is the same as the kernel of the operator
\begin{align*}
v\mapsto \bar\partial^*(v\hook \overline{\Omega})\pm\partial^*(v\hook \Omega),
\end{align*}
where $v\in C^\infty(\nu_M(N)\otimes \mathbb{C})$. By Proposition \ref{prop:glin} and Corollary \ref{cor:glin}, both the set of $v\in C^\infty(\nu_M(N)\otimes\mathbb{C})$ so that 
\[
\partial^*(v\hook \Omega)=0,
\]
and the set of $v\in C^\infty(\nu_M(N)\otimes\mathbb{C})$ so that 
\[
\bar\partial^*(v\hook \overline{\Omega})=0,
\]
can be identified with the kernel of
\[
\bar\partial:C^\infty(\nu^{1,0}_M(N))\to C^\infty(\Lambda^{0,1}N\otimes \nu^{1,0}_M(N)).
\]
\end{proof}
We can now deduce a result about Cayley deformations of a compact complex surface inside a Calabi--Yau four-fold.
\begin{thm}\label{thm:maincomp}
 Let $N$ be a two-dimensional compact complex submanifold of a four-dimensional Calabi--Yau manifold $M$. Then the moduli space of Cayley deformations of $N$ in $M$ is the moduli space of complex deformations of $N$ in $M$, which is a smooth manifold of dimension
\[
 \textnormal{dim}_\mathbb{C} \textnormal{Ker }\bar\partial+\textnormal{dim}_\mathbb{C} \textnormal{Ker }\bar\partial^*=2 \,\textnormal{dim}_{\mathbb{C}} \textnormal{Ker }\bar\partial,
\]
where 
\begin{align*}
 \bar\partial:C^\infty(\nu^{1,0}_M(N))&\to C^\infty(\Lambda^{0,1}N\otimes \nu^{1,0}_M(N)), \\
\bar\partial^*:C^\infty(\Lambda^{0,2}N\otimes\nu^{1,0}_M(N))&\to C^\infty(\Lambda^{0,1}N\otimes \nu^{1,0}_M(N)).
\end{align*}
\end{thm}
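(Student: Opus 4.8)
The plan is to identify the moduli space of Cayley deformations of $N$ with the moduli space of complex deformations of $N$ as topological spaces, and then to read off smoothness and dimension from Theorem~\ref{thm:cxdefs} and Corollary~\ref{cor:glin}. Thus the only genuinely new content is the identification of the two moduli spaces; the local structure of each has already been pinned down.

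First I would establish the two inclusions. On the one hand $N$, and every complex deformation $N'$ of $N$, is a complex surface in $M$ and hence Cayley by \eqref{eqn:caycy}, so the moduli space of complex deformations of $N$ is contained in $\mathcal{M}_\textnormal{Cay}(N)$. On the other hand, let $N'$ be a Cayley deformation of $N$. Being cut out by a smooth family of embeddings, $N'$ is compact and homologous to $N$, and as $\Phi$ is closed and both $N$ and $N'$ are $\Phi$-calibrated,
\[
\int_{N'}\textnormal{vol}_{N'}=\int_{N'}\Phi=\int_N\Phi=\int_N\textnormal{vol}_N.
\]
Now $\frac{1}{2}\omega\wedge\omega$ is a closed calibration on $M$ (the Wirtinger calibration), whose calibrated submanifolds of real dimension four are exactly the complex surfaces, and $N$ is calibrated by it; applying Proposition~\ref{prop:calibmin} to $Y=N$ and $Y'=N'$ with this calibration, the equality of volumes just established puts us in the equality case, forcing $N'$ to be $\frac{1}{2}\omega\wedge\omega$-calibrated, i.e.\ a complex submanifold. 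Hence $\mathcal{M}_\textnormal{Cay}(N)$ and the moduli space of complex deformations of $N$ have the same underlying set, and since both carry the $C^\infty$ topology on nearby submanifolds they coincide as topological spaces.

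It then remains to assemble the statement. By Theorem~\ref{thm:cxdefs} the moduli space of complex deformations of $N$ is a smooth manifold of dimension $2\,\textnormal{dim}_\mathbb{C}\,\textnormal{Ker}\,\bar\partial$, and Corollary~\ref{cor:glin}, with $p=2$ and $m=4$, provides an isomorphism $\textnormal{Ker}\,\bar\partial\to\textnormal{Ker}\,\bar\partial^*$, where now $\bar\partial^*\colon C^\infty(\Lambda^{0,2}N\otimes\nu^{1,0}_M(N))\to C^\infty(\Lambda^{0,1}N\otimes\nu^{1,0}_M(N))$ as in the statement; hence $2\,\textnormal{dim}_\mathbb{C}\,\textnormal{Ker}\,\bar\partial=\textnormal{dim}_\mathbb{C}\,\textnormal{Ker}\,\bar\partial+\textnormal{dim}_\mathbb{C}\,\textnormal{Ker}\,\bar\partial^*$, which is the claimed dimension. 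For consistency with Theorem~\ref{thm:compactcay} one observes that the corollary following Proposition~\ref{prop:glin}, together with the $L^2$-orthogonality of $\textnormal{Im}\,\bar\partial$ and $\textnormal{Im}\,\bar\partial^*$ on the compact manifold $N$, identifies $\textnormal{Ker}(\infop)$ with $\textnormal{Ker}\,\bar\partial\oplus\textnormal{Ker}\,\bar\partial^*$, so the dimension above is precisely $\textnormal{dim}\,\textnormal{Ker}(\infop)$; equivalently, the obstruction map $g_2$ of Theorem~\ref{thm:compactcay} must vanish on a neighbourhood of $0$ even though the obstruction space $\mathcal{O}\cong\textnormal{Ker}(\infop)^*$ need not be trivial.

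The one delicate point is the inclusion of $\mathcal{M}_\textnormal{Cay}(N)$ in the complex deformations: the equality case of the calibration inequality has to be invoked for the Wirtinger calibration $\frac{1}{2}\omega\wedge\omega$ rather than for $\Phi$ itself, and this is legitimate precisely because a Cayley deformation keeps $N$ in its original homology class while $N$ is already calibrated by $\frac{1}{2}\omega\wedge\omega$. If instead one insisted on a proof purely in McLean's style---extracting smoothness from the nonlinear operator $F$ of Proposition~\ref{prop:caypde} without appealing to Proposition~\ref{prop:calibmin}---the hard part would be to show directly that the finite-dimensional obstruction map $g_2$ of Theorem~\ref{thm:compactcay} vanishes identically near the origin, and this appears to require exactly the input supplied here by Theorem~\ref{thm:cxdefs}.
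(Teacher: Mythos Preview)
Your argument is correct, but it takes a route different from the paper's and, in fact, bypasses the paper's central point. You establish the inclusion $\mathcal{M}_{\textnormal{Cay}}(N)\subseteq\mathcal{M}_{\textnormal{Cx}}(N)$ by invoking Proposition~\ref{prop:calibmin} for the Wirtinger calibration $\tfrac{1}{2}\omega\wedge\omega$; this is valid and is exactly the argument sketched in the introduction. The paper, however, is written precisely to avoid that calibration-theoretic step. Its proof runs instead by a sandwich: Theorem~\ref{thm:cxdefs} gives that the complex moduli space is smooth of real dimension $d=2\,\textnormal{dim}_\mathbb{C}\textnormal{Ker}\,\bar\partial$; Theorem~\ref{thm:compactcay} realises the Cayley moduli space locally as $\textnormal{Ker}\,g_2$ inside the $d$-dimensional manifold $K_0\subseteq\textnormal{Ker}(\infop)$; and since the complex moduli space sits inside the Cayley moduli space, one has a $d$-manifold sitting inside a subset of a $d$-manifold, forcing equality near $N$. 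Your approach is shorter and more robust (no subtlety about why a $d$-manifold inside a $d$-manifold is open), while the paper's approach delivers the advertised payoff of proving the result purely from the deformation-theoretic machinery, without appeal to the global volume-minimising property of calibrated submanifolds. Amusingly, your final paragraph sketches exactly the paper's argument as an alternative you chose not to pursue.
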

\begin{proof}
The moduli space of complex deformations of $N$ in $M$ is a smooth manifold of the claimed dimension by Theorem \ref{thm:cxdefs}.

By Theorem \ref{thm:compactcay}, the moduli space of Cayley deformations of $N$, if it is smooth, has dimension at most equal to the dimension of the kernel of $\bar\partial+\bar\partial^*$, which is equal to the sum of the dimensions of the kernel of $\bar\partial$ and $\bar\partial^*$ since $N$ is compact. Since the moduli space of Cayley deformations of $N$ contains the moduli space of complex deformations of $N$, we see that they must have the same dimension, and therefore are the same.
\end{proof}

With this theorem, we achieve our aim of showing directly that complex and Cayley deformations of a compact complex surface inside a Calabi--Yau manifold are the same, as can be deduced from Proposition \ref{prop:calibmin}. Moreover, we have matched the result of Kodaira's theorem \cite[Theorem 1]{MR0133841} that says that the infinitesimal complex deformations of $N$ are isomorphic to the kernel of $\bar\partial$, with the improvement that we do not need to consider any obstructions.

\textbf{Acknowledgements.} I would like to thank Jason Lotay for his help, guidance and feedback on this project. I would also like to thank Alexei Kovalev, Yng-Ing Lee and Julius Ross for comments on my PhD thesis, from which this work is taken. This work was supported by the UK Engineering and Physical Sciences Research Council (EPSRC) grant EP/H023348/1 for the University of Cambridge Centre for Doctoral Training, the Cambridge Centre for Analysis.
\bibliographystyle{plain}
\bibliography{compactbib}

\end{document}